\newcommand{\comm}[1]{}
\numberwithin{equation}{section}
\numberwithin{figure}{section}
\theoremstyle{plain}
\newtheorem*{thm*}{\protect\theoremname}
\newtheorem{thm}{\protect\theoremname}[section]
\newtheorem{lem}[thm]{\protect\lemmaname}
\newtheorem*{prob}{\protect\problemname}
\newtheorem{cor}[thm]{\protect\corollaryname}
\newtheorem{prop}[thm]{\protect\propositionname}
\theoremstyle{definition}
\newtheorem{defn}[thm]{\protect\definitionname}
\newtheorem{example}[thm]{\protect\examplename}
\newtheorem{Rem}[thm]{\protect\Remarkname}
\newtheorem{nota}[thm]{\protect\notationname}
\theoremstyle{remark}
\newtheorem{rem}[thm]{\protect\remarkname}
\newtheorem*{claim*}{Claim}
\newcommand{\caL}{\mathcal{L}}
\newcommand{\cC}{\mathcal{C}}
\newcommand{\cE}{\mathcal{E}}
\newcommand{\cS}{\mathcal{S}}
\newcommand{\cI}{\mathcal{I}}
\newcommand{\cJ}{\mathcal{J}}
\newcommand{\cK}{\mathcal{K}}
\newcommand{\cO}{\mathcal{O}}
\newcommand{\cF}{\mathcal{F}}
\newcommand{\cN}{\mathcal{N}}
\newcommand{\cV}{\mathcal{V}}
\newcommand{\fm}{\mathfrak{m}}
\renewcommand{\P}{\mathbb{P}}
\newcommand{\F}{\mathbb{F}}
\newcommand{\Q}{\mathbb{Q}}
\newcommand{\Z}{\mathbb{Z}}
\newcommand{\C}{\mathbb{C}}
\newcommand{\A}{\mathbb{A}}
\newcommand{\N}{\mathbb{N}}
\renewcommand{\:}{\colon}
\newcommand{\8}{\infty}
\renewcommand{\to}{\rightarrow}
\newcommand{\map}{\dashrightarrow}
\renewcommand{\,}{\ }
\def\subsection{\@startsection{subsection}{3}%
  \z@{.5\linespacing\@plus.7\linespacing}{.5\linespacing}%
  {\normalfont\itshape\bfseries}} \makeatother  %could use \normalfont\bfseries too
\providecommand{\definitionname}{Definition}
\providecommand{\examplename}{Example}
\providecommand{\lemmaname}{Lemma}
\providecommand{\corollaryname}{Corollary}
\providecommand{\propositionname}{Proposition}
\providecommand{\remarkname}{Remark}
\providecommand{\Remarkname}{Remark}
\providecommand{\theoremname}{Theorem}
\providecommand{\problemname}{Problem}
\providecommand{\notationname}{Notation}
\def\Bs{\operatorname{Bs}}
\def\dom{\operatorname{dom}}
\def\NE{\operatorname{NE}}
\def\Supp{\operatorname{Supp}}
\def\Discrep{\operatorname{Discrep}}
\def\inf{\operatorname{inf}}
\def\depth{\operatorname{depth}}
\def\pr{\operatorname{pr}}
\def\pp{\operatorname{p}}
\def\Exc{\operatorname{Exc}}
\def\Proj{\operatorname{Proj}}
\def\Sym{\operatorname{Sym}}
\def\Frac{\operatorname{Frac}}
\def\Spec{\operatorname{Spec}}
\def\Sing{\operatorname{Sing}}
\def\Pic{\operatorname{Pic}}
\def\NS{\operatorname{NS}}
\def\Cl{\operatorname{Cl}}
\def\Cl{\operatorname{Cl}}
\def\div{\operatorname{div}}
\def\ev{\operatorname{ev}}
\def\Bl{\operatorname{Bl}}
\def\Gr{\operatorname{Gr}}
\def\Coker{\operatorname{Coker}}
\def\Im{\operatorname{Im}}
\def\HHom{\operatorname{\mathcal{H}om}}
\def\codim{\operatorname{codim}}
\def\redd{\mathrm{red}}
\def\reg{\mathrm{reg}}
\def\iff{\ \Leftrightarrow}
\def\mono{\hookrightarrow}
\newcommand{\ov}{\overline}
\begin{document}

\title[Completions of affine spaces into Mori fiber spaces]{Completions of affine spaces \\ into Mori fiber spaces with non-rational fibers}

\author{Adrien Dubouloz}
\address{IMB UMR5584, CNRS, Univ. Bourgogne Franche-Comt\'e, EIPHI Graduate School  ANR-17-EURE-0002, F-21000 Dijon, France.}
\email{adrien.dubouloz@u-bourgogne.fr}

\author{Takashi Kishimoto}
\address{Department of Mathematics, Faculty of Science, Saitama University, Saitama 338-8570, Japan}
\email{tkishimo@rimath.saitama-u.ac.jp}

\author{Karol Palka}
\address{Institute of Mathematics, Polish Academy of Sciences, \'{S}niadeckich 8, 00-656 Warsaw, Poland}
\email{palka@impan.pl}

\thanks{The authors were supported by the National Science Centre, Poland, grant number 2015/18/E/ST1/00. The first author was partially supported by the French ANR project ”FIBALGA” ANR-18-CE40-0003. The second author was partially supported by JSPS KAKENHI, grant number JP19K03395. For the purpose of Open Access, the authors have applied a CC-BY public copyright license to any Author Accepted Manuscript version arising from this submission.}

\begin{abstract}
We describe a method to construct completions of affine spaces into total spaces of $\Q$-factorial terminal Mori fiber spaces over the projective line. As an application we provide families of examples with non-rational, birationally rigid and non-stably rational general fibers.
\end{abstract}

 % % MSC2020 14R10, 14E30, 14E08
 
\maketitle

\section{Introduction}

We work with complex algebraic varieties. A \emph{completion} of a given variety $U$ is a complete variety containing a Zariski open subset isomorphic to $U$. In this article we consider the problem of describing minimal completions of affine spaces $\A^n$. Since the Kodaira dimension of $\A^n$ is negative, a natural way to define minimality in this context is to require the completion to be the total space of a Mori fiber space. Indeed, by \cite[Corollary 1.3.3]{BCHM} such varieties come as outputs of the Minimal Model Program applied to smooth projective varieties of negative Kodaira dimension. We call them \emph{Mori fiber completions}. From the viewpoint of the Minimal Model Program it is also natural to consider not only smooth but also mildly singular varieties and their completions, namely those which are $\Q$-factorial and have terminal singularities.   

If $\pi\:V \to B$ is a Mori fiber completion of $\A^n$ then $V$ is rational and the base variety $B$ is unirational. The case when $B$ is a point is especially important, because then $V$ is a Fano variety of Picard rank one. Fano varieties and their rationality are objects of intensive studies, see \cite{Iskovskikh_Prokhorov-Fano_varieties}, \cite{Kollar-Smith-Corti_Nearly_rational}. Classifying smooth Fano completions of $\A^n$ of Picard rank one is the projective version of the celebrated problem of finding minimal analytic completions of complex affine spaces raised by Hirzebruch \cite[Problem 27]{Hirzebruch_problems} and studied by many authors. In case $n=1,2$ there is only $\P^n$. The first difficult case, $n=3$, was completed in a series of papers \cite{H3a, H3b, Pr, Fu93}, see also \cite{Kis05} for partial classification results concerning completions of $\A^3$ into smooth Fano threefolds with Picard rank two. For $n=4$ there are some partial results \cite{Pr93, PZ-genus10_4folds_and_A4}. 

In this article we focus on the situation where the base $B$ is a curve, hence is isomorphic to the projective line $\P^1$. Simple examples of Mori fiber completions of $\A^n$ of this type are given by locally trivial $\P^{n-1}$-bundles over $\P^1$. Another series of examples can be constructed by taking the product of $\P^1$ with any $\Q$-factorial terminal Fano variety of Picard rank one which is a completion of $\A^{n-1}$, see Example \ref{ex:products}. These examples are special in the sense that general fibers are completions of $\A^{n-1}$. In general, the following interesting problem arises:

\begin{prob}
Let $\pi\: V\to \P^1$ be a Mori fiber completion of $\A^n$. What can be said about the geometry of general fibers of $\pi$? 
\end{prob}

A basic observation is that a general fiber of $\pi$ is a Fano variety of dimension $n-1$ with terminal singularities. The property of being a Mori fiber space implies in particular that the generic fiber of $\pi$ has Picard rank one over the function field of $\P^1$. On the contrary, general fibers do not necessarily have Picard rank one, as can be seen for instance for del Pezzo fibrations of some threefolds completing $\A^3$, see Example \ref{ex:dP_fibr}. Still, it is a restrictive condition for a Fano variety to be a general fiber of a Mori fiber space, see \cite{CFST-Fano_fibers, CFST-Fano_fibers2}. 
For $n=2, 3$ a general fiber of $\pi$, being terminal and Fano, is either $\P^1$ or a smooth del Pezzo surface, hence is a completion of $\A^{2}$. In contrast, our first result implies that in higher dimensions general fibers of Mori fiber completions of $\A^n$ can be very far from being rational.

\begin{thm}\label{thm2}
Let $H$ be a hyperplane in $\P^n$, $n\geq 2$. For every integral hypersurface $F\subseteq \P^n$ of degree $d\leq n$ such that $F\cap H$  is irreducible and contained in the smooth locus of $F$ there exists a Mori fiber completion $\pi\:V\to \P^1$ of the affine $n$-space $\A^n \cong \P^n \setminus H$ such that all hypersurfaces other than $dH$ in the pencil of divisors $\langle F,dH\rangle$ generated by $F$ and $dH$ appear as fibers of $\pi$.
\end{thm}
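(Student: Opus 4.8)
The plan is to construct $V$ explicitly as a birational modification of the projective cone $C \subseteq \P^{n+1}$ over $F$, exploiting the classical fact that blowing up the vertex of such a cone produces a projective bundle over $F$. More precisely, consider the linear pencil $\mathcal{P} = \langle F, dH \rangle$ on $\P^n$; after removing $H$ we get a pencil on $\A^n$ whose general member is $F|_{\A^n}$ up to a translate, and whose base locus is $F \cap H$. I would first resolve this pencil by blowing up $\P^n$ along (the strict transform of) $F\cap H$, obtaining a morphism $\P^n \dashrightarrow \P^1$ resolved on a smooth model $W \to \P^n$; the key point is that since $F \cap H \subseteq F$ lies in the smooth locus and is irreducible of the expected dimension, the blow-up is well-behaved and the strict transform of $H$ becomes a fiber. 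The affine space $\A^n = \P^n \setminus H$ sits inside $W$ as the complement of the strict transform of $H$ together with the exceptional locus lying over $F \cap H$, but one must be careful: to keep $\A^n$ as an open subset we need the modification to be an isomorphism over $\A^n$, so the blow-up center must be chosen inside $H$ (or along a subvariety disjoint from $\A^n$). Here is where the cone construction enters: rather than blowing up $\P^n$ itself, I would embed the situation into the cone picture where $F\cap H$ is the natural center to contract or extract.

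The concrete recipe I expect to use: take the $\P^1$-bundle $\mathbb{P}_{\P^n}(\mathcal{O} \oplus \mathcal{O}(d)) \to \P^n$, or rather work with the projective cone $C$ over $F \subseteq H \cong \P^n$ with vertex a point $p$, sitting in $\P^{n+1}$; here $\dim C = d$-dimensional issues are replaced by: $C$ is an $n$-dimensional variety, the projection from $p$ realizes $C \setminus \{p\}$ as an $\A^1$-bundle (line bundle minus zero section, essentially) over $F$. The hyperplane sections of $C \subseteq \P^{n+1}$ through $p$ cut out cones over hyperplane sections of $F$, while the general hyperplane section not through $p$ is isomorphic to $F$ itself. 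So the linear system of hyperplanes through $p$ gives a rational map $C \dashrightarrow \P^n$ contracting the vertex, and restricting to a well-chosen pencil inside it gives exactly the pencil $\langle F, dH \rangle$ after identifying the ambient $\P^n$ appropriately. I would then run an MMP on a resolution of $C$ (or a suitable $\Q$-factorialization), using that $C$ has terminal singularities when $F$ is smooth along the relevant locus and $d \le n$ (the condition $d \le n$ ensures $-K_C$ is suitably positive / the cone singularity is terminal — this is exactly the numerical input that makes the fibration Fano), to arrive at a $\Q$-factorial terminal Mori fiber space $\pi\: V \to \P^1$. Along the way I must check that the birational modifications are all isomorphisms over the open set corresponding to $\A^n = \P^n \setminus H$, so that $\A^n \hookrightarrow V$ as an open subset, and that the members of the pencil other than $dH$ (which is the "cone over $F\cap H$ counted appropriately", i.e. the degenerate member supported on $H$) survive as genuine fibers.

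In order, the steps are: (1) set up the cone $C$ over $F$ and the projection structure, identifying the pencil $\langle F, dH\rangle$ with a pencil of hyperplane sections and pinning down which member is the "bad" one ($dH$, the non-reduced or reducible degenerate fiber that we discard); (2) verify that $C$, or the total space after the canonical resolution of its vertex, is $\Q$-factorial with terminal singularities — this uses $d \le n$ and the smoothness of $F$ along $F \cap H$, via the standard discrepancy computation for cones $a(E) = -1 + \frac{?}{?}$ giving terminality precisely in this range; (3) resolve the pencil to get a genuine morphism to $\P^1$ and run a relative MMP over $\P^1$, terminating (by \cite[Corollary 1.3.3]{BCHM}) in a Mori fiber space $V \to \P^1$ (the base must remain $\P^1$ since the generic fiber, being $F_{\bar\eta}$ or a small modification, is Fano of Picard rank one over the function field); (4) track the affine chart: show every step is crepant or an isomorphism over $\A^n$, so $\A^n$ embeds as a Zariski-open subset of $V$, and the non-discarded members of the pencil appear as scheme-theoretic fibers of $\pi$.

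The main obstacle I anticipate is step (2) combined with the need in step (3) to keep the base equal to $\P^1$ while not accidentally contracting or flipping away the general fiber $F$: the relative MMP could in principle produce a Mori fiber space over a higher-dimensional base, or modify the general fiber into something birational to but not isomorphic to $F$. Controlling this requires showing that the general fiber $F$ of the resolved fibration is already a $\Q$-factorial terminal Fano of Picard rank one — which holds because $F$ is a hypersurface of degree $d \le n$ in $\P^n$, smooth along $F\cap H$, hence its Picard group is generated by the hyperplane class (Grothendieck–Lefschetz, when $n\ge 4$; handled directly for small $n$) and $-K_F = \mathcal{O}_F(n+1-d)$ is ample — so that no fiberwise MMP step is needed and the relative MMP acts only "horizontally", preserving both the base $\P^1$ and the general fibers. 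Establishing this rigidity of the general fiber, and checking the terminal-singularity bookkeeping for the total space near the degenerate fibers, is the technical heart of the argument.
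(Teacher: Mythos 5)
Your opening paragraph starts on the right track (resolve the pencil $\langle F,dH\rangle$ on $\P^n$ and run a relative MMP over $\P^1$), but the pivot to the projective cone is a genuine error, and it is not repairable. The cone $C\subseteq\P^{n+1}$ over $F$ is birational to $F\times\P^1$: as you note yourself, projection from the vertex $p$ makes $C\setminus\{p\}$ a line bundle over $F$, and blowing up $p$ gives a $\P^1$-bundle over $F$. Hence \emph{any} birational modification of $C$ is birational to $F\times\P^1$, while a Mori fiber completion $V$ of $\A^n$ contains $\A^n$ as a dense open subset and is therefore rational. For precisely the cases this theorem is aimed at --- e.g.\ $d=n=4$ with $F$ a very general quartic threefold, which is not stably rational, so $F\times\P^1$ is not rational --- no modification of $C$ can contain a copy of $\A^n$. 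The identification you rely on is also incorrect: the linear system of hyperplanes through $p$ is the projection from $p$, which maps $C$ onto $F$ (not onto $\P^n$), and hyperplane sections of $C$ through $p$ are cones over hyperplane sections of $F$, i.e.\ they correspond to divisors \emph{on} $F$, not to members of the pencil $\langle F,dH\rangle$ of degree-$d$ hypersurfaces of $\P^n$; a pencil of hyperplane sections of $C$ gives an isotrivial fibration with all smooth fibers isomorphic to $F$, which is not what the statement requires. Moreover, the worry that drove you to the cone is unfounded: the base scheme of $\langle F,dH\rangle$ is supported on $F\cap H\subseteq H$, so it is automatically disjoint from $\A^n=\P^n\setminus H$, and resolving the pencil (via the graph of $\P^n\dashrightarrow\P^1$ and, if needed, a $\Q$-factorial terminalization) changes nothing inside $\A^n$.

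The correct skeleton is essentially the one your steps (2)--(4) gesture at, but carried out on $\P^n$ itself, as the paper does: using $F\cap H\subseteq F_\reg$ and $d\geq 2$ one shows every member other than $dH$ is integral and smooth along the base locus; $d\leq n$ gives that normal members $Y$ satisfy $-K_Y\sim(n+1-d)H|_Y$ ample, so general members are terminal Fano, and for $n\geq 4$ Grothendieck--Lefschetz gives $\Pic(Y)\cong\Z$; then any terminating $K$-MMP relative to $\P^1$, run from a suitable resolution, is an isomorphism over the complement of the point corresponding to $dH$, because a fiber of Picard rank one admits no nontrivial contraction, and its output is the desired Mori fiber completion with the members of the pencil as fibers. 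Note finally that for $(n,d)=(2,2),(3,2),(3,3)$ the general member has Picard rank strictly larger than one (conic, quadric surface, cubic surface), so the rank-one rigidity argument you invoke fails there and these cases need separate constructions (explicit del Pezzo fibrations), which your proposal does not provide.
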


By Bertini's theorem a general member of a pencil as in Theorem \ref{thm2} is smooth. For $(n,d)=(4,3)$ it is a smooth cubic threefold, hence is unirational but not rational \cite{CM72}. For $d=n\geq 4$ a general member is birationally super-rigid (see Definition \ref{def:rigid}) by \cite{deF13, IsMa71,Pu98}. This gives the following corollary. 

\begin{cor}\label{cor:completions_of_A4}
For every $n\geq 4$ there exists a Mori fiber completion of $\A^n$ over $\P^1$ whose general fibers are smooth birationally super-rigid Fano varieties of Picard rank one.
\end{cor}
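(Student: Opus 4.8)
The plan is to apply Theorem~\ref{thm2} with $d=n$: for that value of $d$ the smooth members of the resulting pencil are exactly smooth Fano hypersurfaces of index one, which are the varieties named in the corollary. The only freedom to exercise is the choice of $F$, and once it is made the statement will follow from Theorem~\ref{thm2}, the remark preceding it, and standard facts about smooth hypersurfaces in $\P^n$.

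\emph{Choice of $F$.} Fix a hyperplane $H\subseteq\P^n$, so that $\A^n\cong\P^n\setminus H$, and take $F$ to be a general hypersurface of degree $n$ in $\P^n$. By Bertini such an $F$ is smooth, and since the restriction map $H^0(\P^n,\cO_{\P^n}(n))\to H^0(H,\cO_H(n))$ is surjective, the divisor $F\cap H$ is a general member of $|\cO_H(n)|$ with $H\cong\P^{n-1}$, hence is also smooth. As $n\ge 4$, $F\cap H$ is a smooth hypersurface of dimension $n-2\ge 2$ in $\P^{n-1}$, therefore connected and hence irreducible; and since $F$ is smooth, $F\cap H$ automatically lies in the smooth locus of $F$. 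Thus $F$ satisfies the hypotheses of Theorem~\ref{thm2}.

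\emph{Conclusion.} Applying Theorem~\ref{thm2} yields a Mori fiber completion $\pi\:V\to\P^1$ of $\A^n$ in which every member of the pencil $\langle F,nH\rangle$ other than $nH$ occurs as a fiber; in particular a general fiber of $\pi$ is a general member of that pencil. By Bertini's theorem (as recorded in the remark preceding the corollary) a general member $X$ of $\langle F,nH\rangle$ is a smooth hypersurface of degree $n$ in $\P^n$. Then $K_X\cong(K_{\P^n}+X)|_X\cong\cO_X(n-(n+1))=\cO_X(-1)$, so $-K_X\cong\cO_X(1)$ is ample and $X$ is Fano; since $\dim X=n-1\ge 3$, the Grothendieck--Lefschetz theorem gives $\Pic(X)\cong\Pic(\P^n)\cong\Z$, so $X$ has Picard rank one; and by \cite{deF13,IsMa71,Pu98} a general smooth hypersurface of degree $n$ in $\P^n$ with $n\ge 4$ is birationally super-rigid. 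Hence the general fibers of $\pi$ are smooth birationally super-rigid Fano varieties of Picard rank one, which is the claim.

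I do not anticipate a genuine obstacle, since all the substantive work sits in Theorem~\ref{thm2} and in the cited (super-)rigidity theorems. The single point meriting a line of care is the identification of a general fiber of $\pi$ with a general member of the pencil: as all but one member of the pencil (parametrized by a $\P^1$) appears among the fibers of $\pi$ (also parametrized by a $\P^1$), the induced correspondence between these two curves is dominant, so a general fiber of $\pi$ does correspond to a general member of $\langle F,nH\rangle$; this uses only that a general pencil of degree-$n$ hypersurfaces is non-isotrivial.
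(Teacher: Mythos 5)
Your proposal is correct and follows essentially the same route as the paper: take $d=n\geq 4$ in Theorem \ref{thm2} with $F$ a general degree-$n$ hypersurface, use Bertini for smoothness of general members, and invoke \cite{deF13, IsMa71, Pu98} for birational super-rigidity (with Grothendieck--Lefschetz giving Picard rank one). Your final worry about matching general fibers with general pencil members is unnecessary, since the construction behind Theorem \ref{thm2} (Corollary \ref{cor:MFS-Completion}) identifies the fiber $\pi^*(p)$ with the member $\caL_p$ over the same $\P^1$ for all $p$ outside the degeneracy locus.
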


Another corollary concerns completions of polynomial morphisms $f\:\A^n\to \A^1$ of low degree into Mori fiber spaces over $\P^1$. 

\begin{cor}\label{cor:polynomial} Assume that $f\: \A^n\to \A^1$ is a morphism given by a polynomial of total degree at most $n$ and that in the natural open embedding $\A^n\subseteq \P^n$ the intersection of the closure of the zero locus of $f$ with $\P^n\setminus \A^n$ is smooth. Then $f\:\A^n\to \A^1$ can be completed into a Mori fiber space over $\P^1$. 
\end{cor}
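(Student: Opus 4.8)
The plan is to reduce the statement to Theorem~\ref{thm2} by replacing $f$ with the pencil it spans after projective closure. We may assume $f$ is non-constant, so $d:=\deg f$ satisfies $1\le d\le n$. Fix homogeneous coordinates $x_0,\dots,x_n$ on $\P^n$ with $\A^n=\{x_0\neq 0\}$ and $H=\{x_0=0\}$, express $f$ in the affine coordinates $x_i/x_0$, and let $G$ be its homogenization, a form of degree $d$; set $F=\{G=0\}$, the closure of the zero locus of $f$. Since the degree-$d$ part $f_d$ of $f$ is nonzero and equals $G(0,x_1,\dots,x_n)$, the hyperplane $H$ is not a component of $F$, and $F\cap H=\{f_d=0\}$ as a subscheme of $H\cong\P^{n-1}$. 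The key point is that the pencil $\langle F,dH\rangle=\{\lambda G+\mu x_0^{d}=0\}$ has base locus $F\cap dH=F\cap H$, which lies in $H$ and is therefore disjoint from $\A^n$, and that the rational map $\P^n\map\P^1$ it defines restricts on $\A^n$ to the morphism $f$, the member $dH$ corresponding to the point $\P^1\setminus\A^1$. Consequently a Mori fiber completion of $\A^n$ in which the members of $\langle F,dH\rangle$ other than $dH$ occur as fibers is precisely a completion of $f$ into a Mori fiber space over $\P^1$, so it suffices to check that $F$ satisfies the hypotheses of Theorem~\ref{thm2}.

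Assuming $n\ge 3$, I would verify the three conditions on $F$ as follows. If a point $p\in F\cap H$ were singular on $F$, then all $\partial G/\partial x_i$ would vanish at $p$; since $p\in H$ and $\partial G/\partial x_j(p)=\partial f_d/\partial x_j(p)$ for $j=1,\dots,n$, the point $p$ would then be singular on $F\cap H=\{f_d=0\}$, contradicting smoothness — hence $F\cap H$ lies in the smooth locus of $F$. Next, $F\cap H$ is a smooth hypersurface of dimension $n-2\ge 1$ in $\P^{n-1}$, hence connected, hence irreducible. Finally, if $G$ factored as $G_1G_2$ with $\deg G_i\ge 1$, then — since $G$, and hence each $G_i$, is not divisible by $x_0$ — the forms $G_1(0,x_1,\dots,x_n)$ and $G_2(0,x_1,\dots,x_n)$ would be nonconstant with product $f_d$, so $F\cap H$ would contain the two positive-dimensional hypersurfaces they cut out in $\P^{n-1}$; as $n-1\ge 2$ these necessarily meet, and at a point of their intersection $F\cap H$ is singular, a contradiction. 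Thus $G$ is irreducible and $F$ is integral. Theorem~\ref{thm2}, applied to $F$ with $d\le n$, then yields a Mori fiber completion $\pi\colon V\to\P^1$ of $\A^n$ realizing the members of $\langle F,dH\rangle$, so that $\pi$ restricts to $f$ on $\A^n\subseteq V$; this is the desired completion.

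The step I expect to carry the real weight is the last one above, deducing integrality of $F$ from smoothness of $F\cap H$: it is precisely there that one uses that two hypersurfaces in a projective space of dimension at least two must intersect, which is why the argument requires $n\ge 3$. The remaining low-dimensional case $n=2$, where $d\le 2$, I would settle directly: for $d=1$ the closure $F$ is a line meeting $H$ transversally at a point and Theorem~\ref{thm2} applies directly, while $d=2$ is an elementary surface situation — a pencil of conics with a double-line member and two base points — handled by an explicit resolution followed by the minimal model program.
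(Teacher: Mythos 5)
Your main argument is correct and is exactly the derivation the paper intends: homogenize $f$, pass to the pencil $\langle F,dH\rangle$, check that smoothness of the scheme-theoretic intersection $F\cap H$ forces $F$ to be integral with $F\cap H$ irreducible and contained in $F_\reg$, and invoke Theorem~\ref{thm2} (this is precisely how the paper uses the corollary, cf.\ the example ``Families of Mori fiber completions of $\A^n$'' in Section~\ref{sub:ComplSmooth}); your verification of integrality and of irreducibility of $F\cap H$ via the projective dimension theorem in $\P^{n-1}$ is a correct and welcome fleshing-out of what the paper only asserts, and it is valid for all $n\geq 3$ (including $d=1$).

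The genuine gap is your treatment of $n=2$, $d=2$. You claim this is ``an elementary surface situation \dots handled by an explicit resolution followed by the minimal model program,'' but no such argument can succeed, because in that case the conclusion itself fails. A two-dimensional Mori fiber space over $\P^1$ has smooth (terminal) total space of relative Picard rank one, hence is a Hirzebruch surface and \emph{all} of its fibers are isomorphic to $\P^1$; so if $f\:\A^2\to\A^1$ were completed, every scheme-theoretic fiber of $f$ would be an open subscheme of $\P^1$, in particular smooth and reduced. But for $d=2$ the smoothness hypothesis forces $F\cap H$ to be two distinct reduced points, and then the conic pencil $\lambda G+\mu x_0^2$ always has exactly one singular member besides $2H$, whose singular point lies in $\A^2$ (its singular point cannot lie on $H$ because the quadratic form $f_2$ is nondegenerate); hence some fiber of $f$ is nodal or a double line. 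Concretely, $f=xy-1$ satisfies all hypotheses of the corollary and has the nodal fiber $\{xy=0\}$, so it admits no Mori fiber completion over $\P^1$. Note that this is exactly the boundary case where your own reduction breaks: for $n=2$, $d=2$ smoothness of $F\cap H$ is incompatible with its irreducibility, so Theorem~\ref{thm2} never applies; the corollary has to be read for $n\geq 3$ (or with irreducibility of $F\cap H$ as part of the hypothesis, as in Theorem~\ref{thm2} — the only $n=2$, $d=2$ pencils covered by Theorem~\ref{thm2} are the tangent ones of Example~\ref{ex:conic-pencil}, where the intersection at infinity is non-reduced and thus not smooth). So you should delete the claimed $d=2$ patch rather than try to complete it; the rest of your proof stands.
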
 

General fibers of polynomial morphisms $f\:\A^n\to \A^1$ as above of degree $\deg f\leq n-1$ are smooth affine Fano varieties in the sense of \cite{CDP17} (see Definition \ref{def:Affine-Fano}). We note that for $n\geq 6$ and $\deg f=n-1$ general fibers of the Mori fiber space  $\pi\:V\to\P^1$ are then completions of so-called \emph{super-rigid affine Fano varieties} (general fibers of $f$), see Definition \ref{def:affine_rigid} and Example \ref{ex:affine_super-rigid}.

\medskip

We obtain even more families of possible general fibers by considering singular ambient spaces instead of $\P^n$. For a definition of quasi-smooth hypersurfaces in weighted projective spaces, see Section \ref{sec:Cone_construction}.

\begin{thm}\label{thm1}
Let $n\geq 4$ and let $\P=\P(1,a_1,\ldots,a_n)$ be a weighted projective space for some positive integers $a_1,\ldots, a_n$, such that the description of the hyperplane $H=\P(a_1,\ldots,a_n)$  is well-formed. Then for every quasi-smooth terminal hypersurface $F\neq H$ of $\P$ of degree $d\leq a_1+\ldots+a_n$ there exists a Mori fiber completion  $\pi\:V\to \P^1$ of $\A^n \cong \P \setminus H$ such that all hypersurfaces other than $dH$ in the pencil $\langle F,dH\rangle$ generated by $F$ and $dH$ appear as fibers of $\pi$.
\end{thm}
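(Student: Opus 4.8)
The strategy is to mimic the construction behind Theorem \ref{thm2}, replacing the role of $\P^n$ by the weighted projective space $\P = \P(1,a_1,\ldots,a_n)$ and carrying out the cone/blow-up construction alluded to in Section \ref{sec:Cone_construction}. The starting data is the pencil $\langle F, dH\rangle$ of degree-$d$ hypersurfaces in $\P$, whose base locus is the scheme-theoretic intersection $F\cap H$, lying in the hyperplane at infinity $H=\P(a_1,\ldots,a_n)$. The first step is to resolve the indeterminacy of the rational pencil map $\P \map \P^1$ by blowing up (a suitable ideal supported on) $F\cap H$, producing a projective variety $\widetilde{V}$ with a morphism $\widetilde{V}\to \P^1$ whose fibers are the proper transforms of the members of the pencil; crucially $\widetilde{V}$ still contains $\A^n=\P\setminus H$ as an open subset, since the whole modification happens over $H$. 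The second step is to run a relative Minimal Model Program for $\widetilde{V}$ over $\P^1$: since $\P$ (hence a general member of the pencil, hence $\widetilde V$) is rationally connected and covered by a pencil with $-K$ positive on fibers, the MMP over $\P^1$ terminates, by \cite[Corollary 1.3.3]{BCHM}, with a $\Q$-factorial terminal Mori fiber space $\pi\: V\to B$ over some base $B$ dominating $\P^1$; one then argues $B=\P^1$.

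The heart of the argument — and the place where the hypotheses $d\le a_1+\cdots+a_n$, quasi-smoothness, terminality of $F$, and well-formedness of $H$ all get used — is controlling what the MMP does. The key points to establish are: (i) the generic fiber of $\widetilde V\to\P^1$ has Picard rank one over the function field of $\P^1$ (it is the generic member of a pencil on the $\Q$-factorial space $\P$, so after the blow-up the relative Picard rank drops back to one after contracting the exceptional locus), so that the Mori fiber structure is over $\P^1$ itself rather than over a higher-dimensional base; (ii) the anticanonical class is relatively ample after the MMP, which is where the numerical condition $d\le \sum a_i = (\text{roughly})\, \deg(-K_{\P}) $ enters: it guarantees that $-K$ of a general member of the pencil is effective/ample, i.e. the general fiber is Fano; (iii) the MMP steps, all of which take place over $H$ (away from $\A^n$), neither touch the open set $\A^n$ nor destroy the property that the designated members of the pencil survive as honest fibers of $\pi$. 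Quasi-smoothness and terminality of $F$, together with well-formedness, are exactly what is needed to ensure that $\widetilde V$ (and the relevant members of the pencil, in particular a general one) have at worst terminal singularities, so that the MMP can be run and its output is again terminal.

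I would organize the write-up as follows: first recall from Section \ref{sec:Cone_construction} the precise cone construction and the blow-up $\widetilde V \to \P$, verifying $\A^n \hookrightarrow \widetilde V$ and that every member of $\langle F,dH\rangle$ other than $dH$ appears as a fiber of $\widetilde V\to\P^1$; second, compute $K_{\widetilde V}$ in terms of $K_\P$ and the exceptional divisors, and use $d\le\sum a_i$ plus quasi-smoothness/terminality/well-formedness to conclude $\widetilde V$ is terminal and $-K_{\widetilde V}$ is $\pi$-big (or directly pseudo-effective on fibers); third, run the relative MMP over $\P^1$ and invoke \cite[Corollary 1.3.3]{BCHM} to land on a $\Q$-factorial terminal Mori fiber space; fourth, check that the base is $\P^1$ via the relative-Picard-rank-one argument, and that the preserved fibers are not contracted because the MMP is an isomorphism over the generic point of $\P^1$ and over $\A^n$. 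The main obstacle I anticipate is step three's bookkeeping: showing that none of the flips/divisorial contractions in the relative MMP meets $\A^n$ — equivalently that the only extremal rays contracted lie over $H$ — and that a \emph{general} member of the pencil is untouched so that it really occurs as a general fiber of the final $\pi$; this requires a careful local analysis of the exceptional locus of $\widetilde V\to\P$ over $F\cap H$ inside the smooth (quasi-smooth) locus of $F$, which is precisely where the hypothesis "$F\cap H$ contained in the smooth locus of $F$" (in the $\P^n$ case) or quasi-smoothness of $F$ (here) does the work.
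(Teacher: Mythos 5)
There is a genuine gap, and it sits exactly where you flag your ``main obstacle'': you assert that all MMP steps ``take place over $H$'' and justify the choice of base $\P^1$ only through the generic fiber having relative Picard rank one, but neither of these controls what the relative MMP does to the \emph{special closed} fibers. Curves contracted in a $K$-MMP relative to $\P^1$ lie in fibers over $\P^1$, and every member $\caL_p$ with $p\neq\psi(dH)$ meets $\A^n$ in a dense open set; a divisorial contraction or flip inside such a fiber would both change the image of $\A^n$ and destroy the property that that member survives as a fiber (note the theorem demands that \emph{all} members other than $dH$ appear as fibers, not just general ones). Rank one of the generic fiber over $\C(\P^1)$ says nothing about this. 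The paper's mechanism, which your plan is missing, is to prove that every member other than $dH$ is a prime divisor of Picard rank one --- via Lemma \ref{lem:integral-members} together with the Lefschetz hyperplane theorem for quasi-smooth weighted hypersurfaces --- and then to invoke Remark \ref{rem:trivial contractions}: a projective variety of Picard rank one admits no nontrivial contraction, so every extremal contraction of the relative MMP is an isomorphism on all fibers over $\P^1\setminus\{\psi_\caL(dH)\}$ (Proposition \ref{thm:MainThm-Completion}). This is what preserves both $\A^n$ and the designated fibers; your hypothesis (i) does not substitute for it.

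The second, related problem is your choice of resolution. Blowing up (an ideal supported on) $F\cap H$ is not adequate here: in the weighted setting the members are non-Cartier Weil divisors, the blow-up of the base scheme can differ from the graph of $\psi_\caL$ and can introduce exceptional divisors dominating $\P^1$ --- see Example \ref{ex:non-iso-Blowup-discr}, where the blow-up has discrepancy locus equal to all of $\P^1$ --- after which even the general members no longer occur as fibers of the resolved fibration. Moreover there is no reason such a blow-up is $\Q$-factorial and terminal, and ``compute $K_{\widetilde V}$ and use $d\le\sum a_i$'' will not show it: the inequality $d\le a_1+\cdots+a_n$ (i.e.\ $d<i_\P$) is used only to make the members Fano via adjunction (Lemma \ref{lem:fat-member-take0}), not to control singularities of the total space. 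The paper instead works with the (normalized) graph, proves directly that $\Gamma\setminus H'$ is $\Q$-factorial terminal using terminality of $F$, Lemma \ref{lem:Q-term-crit} and Proposition \ref{lem:neighborhood-control}, and then takes an abstract $\Q$-factorial terminalization (Lemma \ref{lem:terminalization}); the point of the ``compatible thrifty resolution'' (Definitions \ref{def:thrifty-reso}, \ref{def:Fano-compatible_res}) is precisely that this terminalization, and hence the whole construction, modifies nothing outside the fiber over $\psi_\caL(dH)$. Without these two ingredients your outline reduces to a restatement of the desired conclusion at its critical step.
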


For $n=4$ we get a class of Mori fiber completions of $\A^4$ over $\P^1$ whose general fibers are quasi-smooth terminal weighted Fano threefold hypersurfaces in the 95 families of Fletcher and Reid \cite{Fl00,Reid-canonical_3folds}, see Corollary \ref{cor:95families_bir_rigid_fibers}. By \cite[Main Theorem]{CP16}, see also \cite{CPR00}, all such threefolds are birationally rigid and some of them are even known to be birationally super-rigid \cite[Theorem 1.1.10]{CP16}. In a similar vein, we deduce from \cite{Okada-stable_rationality_of_Fano_3fold_hypersurfaces} the following result, see Example \ref{okada}.

\begin{cor}\label{cor:completions_of_A4_Okada} There exists a Mori fiber completion of $\A^4$ over $\P^1$ whose very general fibers are Fano varieties of Picard rank one which are not stably rational.
\end{cor}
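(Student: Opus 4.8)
The plan is to specialize Theorem \ref{thm1}, in the form recorded in Corollary \ref{cor:95families_bir_rigid_fibers}, to $n=4$ and to take $F$ to be a very general member of a suitably chosen family among the $95$ families of Fletcher and Reid. Let $\cF$ be such a family, consisting of quasi-smooth terminal hypersurfaces $X_d\subseteq\P(1,a_1,a_2,a_3,a_4)$; for all these families one has $d=a_1+a_2+a_3+a_4$, so the degree bound $d\le a_1+\cdots+a_4$ of Theorem \ref{thm1} holds (with equality), the hyperplane $H=\P(a_1,\ldots,a_4)$ is well-formed, and $d\ge 4>1$, so $F\ne H$. For $F$ general in $\cF$ we thereby obtain a Mori fiber completion $\pi\:V\to\P^1$ of $\A^4\cong\P(1,a_1,\ldots,a_4)\setminus H$ whose fibers other than $dH$ are the members
\[
F_t=\{f+tx_0^d=0\}\subseteq\P(1,a_1,\ldots,a_4),\qquad t\in\C,
\]
of the pencil $\langle F,dH\rangle$, where $x_0$ is the coordinate of weight $1$ and $f$ is the equation of $F$. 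By Corollary \ref{cor:95families_bir_rigid_fibers} the general such $F_t$ is again a quasi-smooth terminal Fano threefold hypersurface in $\cF$, in particular a Fano variety of Picard rank one.

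The point is now to choose $\cF$ so that, in addition, the very general fiber $F_t$ is not stably rational, and here I would invoke Okada \cite{Okada-stable_rationality_of_Fano_3fold_hypersurfaces}: for a suitable family among the $95$ — the simplest instance being the quartic threefolds $X_4\subseteq\P^4=\P(1,1,1,1,1)$, for which the statement goes back to Colliot-Th\'el\`ene--Pirutka — a very general member is not stably rational. Equivalently, the locus $R\subseteq\cF$ of stably rational members is contained in a countable union of proper closed subvarieties of $\cF$; this ``very general'' nature of non-stable rationality is inherent in the degeneration technique for stable rationality (Voisin; Colliot-Th\'el\`ene--Pirutka; and the specialization theorems of Nicaise--Shinder and Kontsevich--Tschinkel) underlying \cite{Okada-stable_rationality_of_Fano_3fold_hypersurfaces}, once the usual reduction to a smooth projective family of resolutions has been made, the terminal singularities being handled as in \cite{Okada-stable_rationality_of_Fano_3fold_hypersurfaces}.

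It then remains to match ``very general in the pencil $\langle F,dH\rangle$'' with ``very general in $\cF$''. Let $W$ be the parameter space of pairs $(F,t)$, an irreducible variety, and consider $\mu\:W\to\cF$, $(F,t)\mapsto F_t$. The morphism $\mu$ is dominant: any quasi-smooth $G=\{g=0\}\in\cF$ equals $F_1$ for $F=\{g-x_0^d=0\}$, which lies in $\cF$ for general $g$. Hence $\mu^{-1}(R)$, as well as the preimage of any countable union of proper closed subsets of $\cF$, is a countable union of proper closed subvarieties of $W$, so a very general point $(F,t)\in W$ satisfies $F_t\notin R$. Fixing first a very general $F\in\cF$ and then a very general $t\in\C$, the associated Mori fiber completion $\pi\:V\to\P^1$ of $\A^4$ has its very general fibers equal to Fano threefolds of Picard rank one that are not stably rational; see Example \ref{okada} for the explicit family.

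The step I expect to be the main obstacle is ensuring that non-stable rationality really is a ``very general'' property in $\cF$, i.e. that the stably rational locus $R$ is contained in a countable union of proper closed subvarieties; this is not elementary and rests on the specialization theorems for stable rationality in (resolutions of) families with mild singularities. Once this is granted, the passage from a very general member of $\cF$ to a very general member of a pencil $\langle F,dH\rangle$ through a very general $F$ is formal, using only the irreducibility of $W$ and the dominance of $\mu$.
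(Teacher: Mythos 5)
Your argument is correct, but it takes a different route from the paper's. The paper (Example \ref{okada}) does not use any of the $95$ index-one families at all: it picks the families Nos.\ 97--102, 107--110, 116, 117 of Okada's Table 1, whose members have degree $d=\sum a_j-\alpha$ \emph{strictly} below the anticanonical bound, precisely because these are the cases where \cite{Okada-stable_rationality_of_Fano_3fold_hypersurfaces} itself gives failure of stable rationality for very general members; the conclusion for very general fibers of the pencil is then stated in one line. You instead stay inside the $95$ anticanonical families of Corollary \ref{cor:95families_bir_rigid_fibers} and, for the concrete instance of quartic threefolds $X_4\subset\P^4$, rely on Colliot-Th\'el\`ene--Pirutka. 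Be aware that the cited paper of Okada does not cover the $95$ index-one families (its relevant entries are exactly the higher-index ones the authors use), so within your route the non-stable-rationality input genuinely has to come from Colliot-Th\'el\`ene--Pirutka (or, say, Beauville for $X_6\subset\P(1,1,1,1,3)$), a reference outside the paper's bibliography --- which you do acknowledge, so this is a citation issue rather than a gap. What your version buys is that the very general fibers are in addition birationally rigid, and that you make explicit the transfer of ``very general'' from the family to the pencil (dominance of $(F,t)\mapsto F_t$ plus the standard fibrewise argument that for very general $F$ the bad set of $t$ is countable), a point the paper leaves implicit in the phrase ``hence whose very general fibers are not stably rational''; what the paper's version buys is that it needs only the single reference \cite{Okada-stable_rationality_of_Fano_3fold_hypersurfaces} and produces a dozen distinct families at once. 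Do spell out the last ``fix $F$ very general, then $t$ very general'' step (a proper closed $Z\subsetneq \cF\times\A^1$ meets $\{F\}\times\A^1$ in a finite set for all $F$ outside a proper closed subset), but that is routine.
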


We now briefly describe our approach. A natural way to obtain completions of a given quasi-projective variety $U$ with $\Q$-factorial terminal singularities ($\A^n$ in particular) is to find some normal projective completion whose singularities are not worse than those of $U$, and then to run a Minimal Model Program on it. If $U$ is smooth then we may take a smooth completion using resolution of singularities. But in general, finding appropriate completions from which to run the program is already a nontrivial task. Moreover, each step, whether it is a divisorial contraction or a flip, may change the isomorphism type of the image of $U$. Preventing this to happen is one of the key problems. To gain more control over the successive steps of the program we study completions which are resolutions of specific pencils of divisors on terminal Fano varieties, namely of those pencils whose general members are Fano varieties of Picard rank one with terminal singularities. We call them \emph{terminal rank one Fano pencils}. This assumption on the one hand allows to find a completion with mild singularities and on the other hand, it gives a chance to analyze the MMP runs in a more detail. We introduce the notion of a ``compatible thrifty resolution" of such pencils, characterized essentially by the property that it keeps the isomorphism type of general members unchanged. We give sufficient criteria for the existence of compatible thrifty resolutions and we show that terminal rank one Fano pencils which admit such resolutions yield interesting Mori fiber spaces over $\P^1$. 

The article is organized as follows: In Section \ref{sec:prelim} we recall basic notions concerning varieties and their singularities in the framework of the Minimal Model Program. Section \ref{sec:Weil-Pencils} reviews properties of pencils of Weil divisors on normal varieties. Section \ref{sec:pencils_and_resolutions} is devoted to the study of terminal rank one Fano pencils, their resolutions and the outputs of relative MMP's ran from these. In Section \ref{sec:H-special} we consider a class of pencils on Fano varieties with class groups $\Z$. It provides a big supply of terminal rank one Fano pencils admitting compatible thrifty resolutions. Applications to the construction of Mori fiber completions of $\A^n$ over $\P^1$ are given in Section \ref{sec:Affine-Spaces}. This section contains proofs of Theorems \ref{thm2} and \ref{thm1} and a series of examples.

\medskip \textit{Acknowledgements.} We thank the Institute of Mathematics of Burgundy, the Saitama University and the Institute of Mathematics of the Polish Academy of Sciences for excellent working conditions. We thank the referees for their suggestions of improvements of the text.

\tableofcontents

\section{Preliminaries}\label{sec:prelim}

We summarize basic notions concerning varieties and their singularities in the framework of the Minimal Model Program which are used in the article. 

We use the following standard terminology: The \emph{domain of the definition} of a dominant rational map $f\:X\map Y$ between algebraic varieties is the largest open subset $\dom(f)$ of $X$ on which $f$ is represented by a morphism. Its complement is called the \emph{indeterminacy locus} of $f$. The \emph{exceptional locus}  $\Exc(f)$ of a proper birational morphism $f\:X\to Y$ is the pre-image of the indeterminacy locus of the birational map $f^{-1}\:Y\map X$. A \emph{resolution} (of indeterminacy) of a rational map $f\:X\map Y$  is a proper birational morphism $\tau\:X'\to X$ such that $f\circ\tau\:X'\map Y$ is a morphism.  
 
\subsection{Singularities in the context of MMP} \label{ssec:singularities}

Let $X$ be a normal variety and let $j\:X_\reg\hookrightarrow X$ be the embedding of the smooth locus. The induced restriction on Picard groups $j^*\:\Pic(X)\to \Pic(X_\reg)$ is injective and the restriction on class groups $\Cl(X)\to\Cl(X_\reg)\cong \Pic(X_\reg)$ is an isomorphism. This gives a natural injection $\Pic(X)\to \Cl(X)$; see \cite[Corollaire 21.6.10]{EGAIV-4}. A canonical divisor of $X$ is a Weil divisor $K_X$ on $X$ whose class in $\Pic(X_\reg)$ is the class of the canonical invertible sheaf $\det(\Omega^1_{X_\reg})$ of $X_\reg$. A Weil divisor on $X$ is called \emph{$\Q$-Cartier} if it has a positive multiple which is Cartier. We say that $X$ is \emph{$\Q$-factorial} if every Weil divisor on $X$ is $\Q$-Cartier.

\medskip 
We now recall some basic facts about singularities of pairs. We refer the reader to \cite[Chapter 2]{KollarMori-Bir_geometry} for details. A \emph{log pair} $(X,D)$ consists of a normal variety $X$ and a Weil $\Q$-divisor $D=\sum d_iD_i$ on it, whose coefficients $d_i$ belong to $[0,1]\cap \Q$, and such that the divisor $K_X+D$ is $\Q$-Cartier. Given a proper birational morphism $f\: Y\to X$ from a normal variety $Y$, we denote by $\cE(f)$ the set of prime divisors $E$ on $Y$ contained in the exceptional locus $\Exc(f)$ of $f$. We call the image of $E$ the \emph{center of $E$ on $X$}.

Given a log pair $(X,D)$ and a birational proper morphism from a normal variety $f\:Y\to X$ we have a linear equivalence of $\Q$-divisors $$K_Y+f_*^{-1}D \sim f^*(K_X+D)+\sum_{E\in \cE(f)}a_{X,D}(E) E,$$ where $a_{X,D}(E) \in \Q$. The number $a_{X,D}(E)$ is called the \emph{discrepancy} of $E$ with respect to $(X,D)$. It does not depend on $Y$ in the sense that if $Y'\to Y$ is a proper birational morphism and $E'$ is the proper transform of $E$ on $Y'$ then $a_{X,D}(E)=a_{X,D}(E')$. The \emph{discrepancy} of the log pair $(X,D)$ is defined as
\begin{equation}\label{eq:discr}
\Discrep (X,D)=\underset{f,E\in \cE(f)}{\inf} a_{X,D}(E),
\end{equation}
where the infimum is taken over all $f\:Y\to X$ as above and all $E\in \cE(f)$.  

A log pair $(X,D)$ is \emph{terminal} if $\Discrep (X,D)>0$, \emph{purely log terminal (plt)} if $\Discrep (X,D)> -1$ and \emph{Kawamata log terminal (klt)} if it is plt and $\lfloor D\rfloor=0$. A normal variety $X$ is \emph{terminal} (respectively klt) if the log pair $(X,0)$ is terminal (respectively klt). The property of being terminal for a variety $X$ and being klt for any log pair $(X,D)$ can be verified by computing the infimum \eqref{eq:discr} on a single proper birational morphism $f\:Y\to X$ which is a log resolution of the log pair $(X,D)$, that is, for which $Y$ is smooth and $\Exc(f)\cup f^{-1}_*(D)$ is a divisor with simple normal crossings, see \cite[Corollaries 2.12,~2.13]{Kollar-Singularities_of_MMP}.  

Given a normal variety $X$ and a closed subset $Z$ of it, we say that $X$ is \emph{terminal (respectively klt) in a neighborhood of $Z$} if there exists an open neighborhood $U$ of $Z$ which is terminal (respectively klt).  Finally, we say that a log pair $(X,S)$, where $S$ is a prime Weil divisor on $X$, is \emph{plt in a neighborhood of $S$} if there exists an open neighborhood $U$ of $S$ such that the log pair $(U,S)$ is plt. Equivalently, if for every log resolution $f\:Y\to X$ of $(X,S)$ every exceptional divisor $E$ of $f$ whose center on $X$ intersects $S$ has discrepancy $a_{X,S}(E)>-1$. 

We will need the following known result.

\begin{lem} \label{lem:plt-to-terminal}Let $(X,S)$ be a log pair such that $S$ is a 
prime Cartier divisor. Assume that $(X,S)$ is plt in a neighborhood of $S$. Then $X$ is terminal in a neighborhood of $S$ if and only if for every log resolution $f\:Y\to X$ of $(X,S)$ every exceptional divisor $E$ of $f$ whose center on $X$ meets $S$ but is not contained in $S$ has positive discrepancy.
\end{lem}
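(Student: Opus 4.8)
The ``only if'' direction I would obtain directly from the definition of terminality: if $U\supseteq S$ is an open subset on which $X$ is terminal and $f\colon Y\to X$ is any log-resolution of $(X,S)$, then every $f$-exceptional divisor $E$ whose center on $X$ meets $S\subseteq U$ determines a divisor over $U$, so $a_X(E)=a_U(E)>0$ by terminality of $U$; a fortiori this holds for those $E$ whose center is not contained in $S$.

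For the converse, the plan is to reduce everything to a single log-resolution. After replacing $X$ by the open neighborhood of $S$ furnished by the plt hypothesis we may assume $(X,S)$ is plt; as $S$ is Cartier, $K_X=(K_X+S)-S$ is then $\Q$-Cartier, so all discrepancies below are defined. Fix one log-resolution $f\colon Y\to X$ of $(X,S)$. The key claim to prove is that \emph{every} $f$-exceptional prime divisor $E$ with $\Center(E)\cap S\neq\emptyset$ has $a_X(E)>0$. Granting this, I would set $Z$ to be the union of the centers on $X$ of the finitely many $f$-exceptional divisors of non-positive discrepancy; by the claim (contrapositively) $Z$ is a closed subset disjoint from $S$, so $U:=X\setminus Z$ is an open neighborhood of $S$. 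The induced proper birational morphism $f^{-1}(U)\to U$ is a log-resolution of $(U,S\cap U)$ every exceptional divisor of which has positive discrepancy, hence $U$ — and therefore $X$ in a neighborhood of $S$ — is terminal by the criterion of \cite[Corollaries~2.12,~2.13]{Kollar-Singularities_of_MMP} recalled in Section~\ref{sec:prelim}.

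It remains to prove the claim, and here I would split into two cases according to the position of $\Center(E)$. If $\Center(E)\not\subseteq S$, there is nothing to do: this is precisely the hypothesis. If $\Center(E)\subseteq S$, the point is to play off the discrepancy of $E$ over $(X,S)$ against its discrepancy over $X$. Subtracting the relation $K_Y\sim f^*K_X+\sum_{E'\in\cE(f)}a_X(E')E'$ from the relation $K_Y+f_*^{-1}S\sim f^*(K_X+S)+\sum_{E'\in\cE(f)}a_{X,S}(E')E'$, and using that $S$ is \emph{Cartier} — so that $f^*S$ has integer coefficients and $f^*S=f_*^{-1}S+\sum_{E'\in\cE(f)}m_{E'}E'$ with $m_{E'}\in\Z_{\geq 0}$ — I obtain
\[
a_X(E')=a_{X,S}(E')+m_{E'},\qquad m_{E'}=\mult_{E'}(f^*S)\quad(E'\in\cE(f)).
\]
Since $\Center(E)\subseteq S$ we have $E\subseteq f^{-1}(S)=\Supp(f^*S)$, and as $f^*S$ has integer coefficients this forces $m_E\geq 1$. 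On the other hand $\Center(E)$ meets $S$ and $(X,S)$ is plt in a neighborhood of $S$, so $a_{X,S}(E)>-1$. Therefore $a_X(E)=a_{X,S}(E)+m_E>-1+1=0$, which establishes the claim.

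I expect the only genuinely delicate part to be the openness bookkeeping — upgrading ``positive discrepancy along every center meeting $S$'' to ``terminal on an honest open neighborhood of all of $S$'' — together with the small verification that the plt hypothesis does apply to divisors $E$ with $\Center(E)\subseteq S$ (it does, since such a center is nonempty and hence meets $S$). By contrast, the numerical heart of the argument, the identity $a_X(E)=a_{X,S}(E)+\mult_E(f^*S)$ that the Cartier-ness of $S$ forces — which makes the correction term a \emph{positive} integer exactly when $\Center(E)\subseteq S$ — is routine.
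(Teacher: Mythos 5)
Your proof is correct and follows essentially the same route as the paper's: both hinge on the identity $a_X(E)=a_{X,S}(E)+\mult_E(f^*S)$, with the Cartier-ness of $S$ forcing $\mult_E(f^*S)$ to be a positive integer exactly when the center of $E$ lies in $S$, and the plt hypothesis giving $a_{X,S}(E)>-1$, so that such divisors automatically have $a_X(E)>0$. Your explicit bookkeeping with the closed set $Z$ and the restricted log resolution just spells out the shrinking step that the paper performs more tersely.
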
   

\begin{proof}
Let $f\:Y\to X$ be a log resolution of $(X,S)$. Replacing $X$ by an open neighborhood of $S$, if necessary, we can assume that the center on $X$ of every exceptional divisor $E_i$ of $f$ meets $S$. Since $S$ is a prime Cartier divisor, we have $f^*S=f_*^{-1}(S)+\sum c_iE_i$, where $c_i$ is a positive integer if $f(E_i)\subseteq S$ and $c_i=0$ otherwise. Writing $K_Y\sim f^*(K_X)+\sum b_iE_i$ we get  $$K_Y+f_*^{-1}(S)\sim f^*(K_X+S)+\sum_{E_i, f(E_i)\not\subseteq S} b_iE_i +\sum_{E_i, f(E_i)\subseteq S} (b_i-c_i)E_i.$$ Since $(X,S)$ is plt in a neighborhood of $S$, $a_{X,S}(E_i)>-1$ for every $E_i$. It follows that for every $E_i$ such that $f(E_i)\subseteq S$ we have $b_i=a_{X,S}(E_i)+c_i>0$. Thus $X$ is terminal in a neighborhood of $S$ if and only if for every $E_i$ such that $f(E_i)\not\subseteq S$ the discrepancy $b_i=a_{X,S}(E_i)$ is positive. 
\end{proof}

Let us recall from \cite[Chapter 6]{Matsumura} basic properties concerning regular sequences and related notions. Let $(R,\fm)$ be a Noetherian local ring. Recall that a sequence $(x_1,\ldots,x_r)$ of elements of $\fm$ is \emph{regular} if for every $i=1,\ldots, r$ the element $x_i$ is not a zero divisor in $R/(x_1,\ldots,x_{i-1})$. The \emph{depth} and the \emph{(Krull) dimension} of $R$ are defined respectively as the maximal length of a regular sequence and as the maximal number of strict inclusions of prime ideals. The ring is called \emph{Cohen-Macaulay} if $\fm$ contains a regular sequence of length $\dim R$. Equivalently, $R$ satisfies Serre's conditions $S_i$: $\depth R\geq \min(\dim R,i)$, for all $i\geq 1$. The ring is \emph{regular} if $\fm$ contains a regular sequence of length $\dim R$ generating $\fm$. We say that a prime ideal $I\subseteq \fm$ of $R$ is a \emph{complete intersection} if it is generated by a regular sequence of length equal to its height. A scheme $X$ is called Cohen-Macaulay if all its local rings are Cohen-Macaulay. An irreducible closed subscheme $Y$ of a scheme $X$ is called a \emph{local complete intersection in $X$} if the sheaf of ideals of $Y$ is a complete intersection in all local rings of $X$. Finally, we say that an irreducible scheme $X$ is a \emph{local complete intersection} if it is locally isomorphic to a local complete intersection in a smooth scheme. 
 
 \smallskip
Let us also recall that a normal variety $X$ has \emph{rational singularities} if for every resolution $f\:\widetilde X\to X$ of the singularities of $X$ we have $R^if_*\cO_{\widetilde X}=0$ for $i\geq 1$.
We will use the following known facts about rational singularities. Note that Kawamata log terminal singularities are rational, see e.g. \cite[Theorem 6.2.12]{Ishii-Intro_to_singularities}. 

\begin{lem}[Rational singularities]\label{lem:rational_sing} 
Let $X$ be a normal variety with rational singularities. Then the following hold: 

\begin{enumerate}[(a)]
\item $X$ is Cohen-Macaulay and hence every local complete intersection $Y$ in $X$ is Cohen-Macaluay. In particular, $Y$ is normal if and only if its singular locus has codimension  at least $2$.  
\item The group $\Cl(X)/\Pic(X)$ is finitely generated. 
\item If $X$ is projective and $\cN$ is a big and nef invertible sheaf on $X$  then $H^i(X,\cN^\vee)=0$ for $0\leq i\leq \dim X -1$.
\end{enumerate}
\end{lem}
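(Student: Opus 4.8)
The three assertions will all be deduced from a single resolution of singularities $f\:Y\to X$ (which exists since we work in characteristic zero), using the defining vanishing $R^if_*\cO_Y=0$ for $i\geq 1$ of rational singularities together with Grauert--Riemenschneider vanishing $R^if_*\omega_Y=0$ for $i\geq 1$. For part (a) the plan is to extract Cohen--Macaulayness of $X$ from Grothendieck duality: since $Y$ is smooth its dualizing complex is $\omega_Y[\dim X]$, and $Rf_*\cO_Y\cong\cO_X$ in the derived category, so duality gives $Rf_*(\omega_Y[\dim X])\cong R\mathcal{H}om_X(\cO_X,\omega_X^\bullet)\cong\omega_X^\bullet$, while Grauert--Riemenschneider collapses the left-hand side to $(f_*\omega_Y)[\dim X]$. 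Hence the dualizing complex of $X$ lies in a single degree, which means $X$ is Cohen--Macaulay; one also reads off the identity $\omega_X\cong f_*\omega_Y$, which will be reused in (c). A local complete intersection $Z\subseteq X$ is then, locally, the quotient of a Cohen--Macaulay local ring of $X$ by a regular sequence, hence Cohen--Macaulay, hence satisfies Serre's condition $S_2$; so by Serre's normality criterion $Z$ is normal if and only if it is regular in codimension one, that is, $\codim_Z\Sing(Z)\geq 2$.

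For part (c) I would begin with Serre duality on the Cohen--Macaulay projective variety $X$, which gives $H^i(X,\cN^\vee)\cong H^{\dim X-i}(X,\cN\otimes\omega_X)^\vee$, where $\omega_X\cong\cO_X(K_X)$ is the dualizing sheaf, reducing the statement to the vanishing of $H^j(X,\cN\otimes\omega_X)$ for all $j\geq 1$. Taking $f$ projective and combining $\omega_X\cong f_*\omega_Y$ with Grauert--Riemenschneider and the projection formula yields $R^if_*(\omega_Y\otimes f^*\cN)=0$ for $i\geq 1$ and $f_*(\omega_Y\otimes f^*\cN)\cong\omega_X\otimes\cN$, so the Leray spectral sequence gives $H^j(X,\omega_X\otimes\cN)\cong H^j(Y,\omega_Y\otimes f^*\cN)$. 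Since $f$ is birational, $f^*\cN$ is again big and nef (for instance $H^0(Y,mf^*\cN)\cong H^0(X,m\cN)$ for every $m$), so the right-hand group vanishes for $j\geq 1$ by Kawamata--Viehweg vanishing on the smooth projective $Y$, and the argument is complete.

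Part (b) is the one I expect to be the most delicate. I would take $f\:Y\to X$ to be an isomorphism over $X_\reg$, with prime exceptional divisors $E_1,\dots,E_m$, which together constitute the exceptional locus (of pure codimension one). Since $Y$ is smooth, excision together with the insensitivity of $\Cl$ to loci of codimension $\geq 2$ gives $\Cl(X)\cong\Cl(X_\reg)\cong\Pic(Y)/\sum_i\Z[E_i]$, under which the inclusion $\Pic(X)\hookrightarrow\Cl(X)$ corresponds to $[M]\mapsto[f^*M]$; therefore $\Cl(X)/\Pic(X)\cong\Pic(Y)/\big(f^*\Pic(X)+\sum_i\Z[E_i]\big)$, and since $\sum_i\Z[E_i]$ is finitely generated it suffices to show that $\Pic(Y)/f^*\Pic(X)$ is finitely generated. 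The low-degree part of the Leray spectral sequence for $f$ and $\cO_Y^*$, using $f_*\cO_Y^*=\cO_X^*$, embeds $\Pic(Y)/f^*\Pic(X)$ into $H^0(X,R^1f_*\cO_Y^*)$; the sheaf $R^1f_*\cO_Y^*$ is supported on $\Sing(X)$, and by Grothendieck's existence theorem and the theorem on formal functions applied to $R^1f_*\cO_Y=R^2f_*\cO_Y=0$ its stalk at a point $x$ is the Picard group of the system of infinitesimal neighbourhoods of the fibre $f^{-1}(x)$; these have vanishing $H^1$ of the structure sheaf, so this Picard group coincides with that of the proper scheme $(f^{-1}(x))_\redd$, which has trivial identity component $\Pic^0$ and is hence finitely generated. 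The step I expect to require the most work is concluding that $H^0(X,R^1f_*\cO_Y^*)$ is itself finitely generated given that its stalks are; this rests on a constructibility argument over a finite stratification of $\Sing(X)$, and can alternatively be quoted from the literature. The remaining parts of (b) are formal.
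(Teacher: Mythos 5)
Your treatments of (a) and (c) are correct. For (a) you in effect reprove, via Grothendieck duality and Grauert--Riemenschneider (Kempf's criterion), the Cohen--Macaulayness that the paper simply quotes from the literature, and the local-complete-intersection and Serre-criterion steps are fine. For (c) you dualize on $X$ itself (using that $X$ is Cohen--Macaulay with dualizing sheaf $f_*\omega_Y$) and then transfer to the resolution, whereas the paper first transfers $H^i(X,\cN^\vee)$ to the resolution by the Leray isomorphism coming from $R^jf_*\cO=0$ and applies Serre duality and Kawamata--Viehweg upstairs; the two routes are essentially equivalent and both work.

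The genuine gap is in (b), which is the one assertion that needs a real idea. Two steps in your formal-neighbourhood argument fail as stated. First, the theorem on formal functions applied to $R^1f_*\cO_Y=R^2f_*\cO_Y=0$ only yields the vanishing of the inverse limits $\varprojlim_n H^i(Y_n,\cO_{Y_n})$ for $i=1,2$; it does not give $H^1(Y_n,\cO_{Y_n})=0$ for the individual infinitesimal neighbourhoods $Y_n$ of the fibre, and such individual vanishings (of $H^1$ and $H^2$ of the graded pieces of the ideal, fed into the truncated exponential sequences relating $\Pic(Y_{n+1})$ to $\Pic(Y_n)$) are exactly what you would need to identify $\varprojlim_n\Pic(Y_n)$ with the Picard group of the reduced fibre. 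Second, the claim that $(f^{-1}(x))_\redd$ has trivial $\Pic^0$ amounts to $H^1\bigl((f^{-1}(x))_\redd,\cO\bigr)=0$, which likewise does not follow from $R^1f_*\cO_Y=0$ by anything you have written (for surfaces this is Artin's theorem; in higher dimension it requires a separate argument). On top of this, identifying the stalk of $R^1f_*\cO_Y^*$ with $\varprojlim_n\Pic(Y_n)$ needs Grothendieck existence over $\Spec \hat{\cO}_{X,x}$ plus a descent/spreading-out step back to a Zariski neighbourhood, and the final passage from finitely generated stalks to finite generation of $H^0(X,R^1f_*\cO_Y^*)$ is explicitly left open (finitely generated stalks alone do not bound the group of global sections of a sheaf supported on $\Sing X$). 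The paper sidesteps all of these issues by going analytic: $\Cl(X)/\Pic(X)$ embeds into $\Pic(\widetilde X^{\an})/f_\an^*\Pic(X^{\an})$, and the exponential sequences together with the isomorphisms $H^i(X^{\an},\cO)\cong H^i(\widetilde X^{\an},\cO)$ (the analytic form of the rational-singularities pullback isomorphism) embed this group into $H^2(\widetilde X^{\an},\Z)/f_\an^*H^2(X^{\an},\Z)$, whose finite generation is purely topological. I would either adopt that argument or quote Kawamata's lemma directly rather than try to repair the formal-neighbourhood route.
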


\begin{proof}

(a) By \cite[Theorem 6.2.14]{Ishii-Intro_to_singularities}, $X$ is Cohen-Macaulay. Then by \cite[Proposition 5.3.12]{Ishii-Intro_to_singularities}, $Y$ is Cohen-Macaulay, too. In particular, by Serre's criterion \cite[Theorem 23.8]{Matsumura}, $Y$ is normal if and only if it is regular in codimension $1$.

%To prove (b) and (c) we first make a cohomological observation. Let $f\:Y\to X$ be a morphism of schemes, $\cE$ a locally free sheaf of finite rank on $X$ and $\cF$ a quasi-coherent sheaf on $Y$. By the projection formula \cite[Exercise III.8.3]{Hartshorne}, $R^jf_*(\cF\otimes_{\cO_Y} f^*\cE)\cong R^j f_*(\cF)\otimes _{\cO_X}\cE$, so the Leray spectral sequence gives  
%\begin{equation}\label{eq:Leray}
%$H^i(X,R^jf_*(\cF)\otimes_{\cO_X} \cE)\Rightarrow H^{i+j}(Y,\cF\otimes_{\cO_Y} f^*\cE).$
 %\end{equation}
%If $R^jf_*(\cF)=0$ for $j\geq 1$ then the sequence degenerates, which gives natural isomorphisms for all $i\geq 0$ (cf.\ \cite[Exercise III.8.1]{Hartshorne}):
%\begin{equation}\label{eq:pullback_cohomology}
%H^i(X,f_*(\cF)\otimes_{\cO_X} \cE)\xrightarrow{\cong}H^i(Y,\cF\otimes_{\cO_Y}f^*\cE).
% \end{equation}

(b) This is proved by reducing to the analytification and then to finiteness of singular homology of the resolution using the exponential sequence, see \cite[Lemma 1.1]{Kawamata-Crepant-Bl-3d}, cf. also  \cite[Propositions 12.1.4, 12.1.6]{KollarMori-3d_flips}. 

(c) Since $X$ has rational singularities, for every locally free sheaf of finite rank $\cE$ on $X$ the projection formula and the Leray spectral sequence (see, \cite[Exercises III.8.1, 8.3]{Hartshorne}) give natural isomorphisms $H^i(X,\cE)\xrightarrow{\cong}H^i(\widetilde X,f^*\cE)$ for $i\geq 0$.
Put $\widetilde \cN=f^*\cN$. Then $H^i(X,\cN^\vee)\cong H^i(\widetilde X,\widetilde \cN^{\vee})$. Since $\widetilde \cN$ is big, nef and invertible, $H^i(\widetilde X, \widetilde \cN^{\vee})=H^{\dim X-i}(\widetilde X,\cO(K_{\widetilde X})\otimes \widetilde \cN)=0$ for $i<\dim X$ by Serre's duality and the Kawamata-Viehweg vanishing theorem for smooth varieties. 
\end{proof}

\subsection{Inversion of adjunction and a $\Q$-factorial terminalization} 

We recall the following version of adjunction and inversion of adjunction, see \cite[Remark 5.47 and Theorem 5.50]{KollarMori-Bir_geometry}, cf.\ \cite[Chapter 4]{Kollar-Singularities_of_MMP} and \cite[Chapters 16 and 17]{FA}.

\begin{lem}[Inversion of adjunction]\label{lem:different} Let $(X,S)$ be a log pair such that $S$ is a normal prime Weil divisor which is Cartier in codimension 2. Then the adjunction formula $K_S=(K_X+S)|_S$ holds. Furthermore, $(X,S)$ is plt in a neighborhood of $S$ if and only if $S$ is klt. 
\end{lem}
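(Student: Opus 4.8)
The plan is to reduce both assertions to the general adjunction formula with the ``different'' together with the standard inversion of adjunction theorem, the only input specific to our hypotheses being the vanishing of the different. Recall the general adjunction for log pairs (see \cite[\S 5.4]{KollarMori-Bir_geometry} and \cite[Chapter 16]{FA}): since $K_X+S$ is $\Q$-Cartier and $S$ is a normal prime divisor, there is a canonically defined effective $\Q$-divisor $\Diff_S(0)$ on $S$, the different, with
$$(K_X+S)|_S \sim_{\Q} K_S+\Diff_S(0).$$
Thus the adjunction formula $K_S=(K_X+S)|_S$ is equivalent to the vanishing $\Diff_S(0)=0$, and, granting this vanishing, the inversion of adjunction statement for the pair $(S,\Diff_S(0))$ collapses into the sought equivalence between plt-ness of $(X,S)$ near $S$ and klt-ness of $S$.

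The key step is to prove $\Diff_S(0)=0$ from the assumption that $S$ is Cartier in codimension $2$. Since $\Diff_S(0)$ is a Weil $\Q$-divisor on the normal variety $S$, it suffices to check that its coefficient along each prime divisor $D$ of $S$ vanishes, and this coefficient is computed at the generic point $\eta_D$ of $D$, a point of codimension $2$ in $X$. There $S$ is Cartier by hypothesis, so in the two-dimensional normal local ring $R=\mathcal{O}_{X,\eta_D}$ the divisor $S$ is cut out by a single nonzero element $s$, and $\mathcal{O}_{S,\eta_D}=R/sR$. As $S$ is normal, $R/sR$ is a one-dimensional normal, hence regular, local ring; since $R$ is a domain and $R/sR$ is regular, a standard lifting argument (which forces $s\notin\mathfrak{m}^2$) shows that $R$ itself is regular. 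Hence $X$ is smooth and $S$ is Cartier at every codimension $1$ point of $S$, where the Poincar\'e residue sequence yields the classical adjunction $K_S=(K_X+S)|_S$ and makes the coefficient of $\Diff_S(0)$ along $D$ vanish. As $D$ was arbitrary, $\Diff_S(0)=0$, which establishes the adjunction formula.

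For the second assertion I would invoke the inversion of adjunction theorem \cite[Theorem 5.50]{KollarMori-Bir_geometry}, cf.\ \cite[Chapter 17]{FA}, to the effect that $(X,S)$ is plt in a neighborhood of $S$ if and only if $(S,\Diff_S(0))$ is klt; combined with $\Diff_S(0)=0$ this gives the desired equivalence with $S$ being klt. The main obstacle sits precisely in this citation. The implication that plt-ness of $(X,S)$ yields klt-ness of the different is the comparatively easy direction, obtained by restricting a common log resolution of $(X,S)$ to $S$ and bounding the discrepancies of $(S,\Diff_S(0))$ below by those of $(X,S)$; the reverse ``inversion'' implication is the genuinely deep part, resting on Kawamata--Viehweg vanishing and the connectedness of the different, and I would import it as a black box rather than reprove it.
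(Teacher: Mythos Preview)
Your argument is correct. Note, however, that the paper does not actually supply a proof of this lemma: it is stated as a recalled fact, with the sentence preceding it pointing to \cite[Remark 5.47 and Theorem 5.50]{KollarMori-Bir_geometry}, \cite[Chapter 4]{Kollar-Singularities_of_MMP}, and \cite[Chapters 16 and 17]{FA}. Your write-up is thus not a comparison target but rather a fleshing-out of precisely why those references yield the statement in this particular form. The content is right: the general adjunction formula produces an effective $\Q$-divisor $\Diff_S(0)$ on $S$, and the hypothesis ``Cartier in codimension $2$'' forces every codimension-$1$ point of $S$ to lie in the smooth locus of $X$ with $S$ Cartier there, so the different has no support and vanishes; Theorem 5.50 of \cite{KollarMori-Bir_geometry} then gives the plt/klt equivalence. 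Your local argument that $R=\mathcal{O}_{X,\eta_D}$ is regular is exactly the observation behind \cite[Remark 5.47]{KollarMori-Bir_geometry} (and is the same lifting argument the paper records separately as Lemma~\ref{lem:pencil-local-smooth}).
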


A \emph{$\Q$-factorial terminalization} of a normal quasi-projective variety $X$ is a proper birational morphism $f\:X'\to X$ such that $X'$ is a quasi-projective $\Q$-factorial terminal variety and $K_{X'}$ is $f$-nef. 

\begin{lem}[$\Q$-factorial terminalization] \label{lem:terminalization}
Every normal quasi-projective variety $X$ has a $\Q$-factorial terminalization $f\:X'\to X$. Furthermore, the restriction of $f$ over every $\Q$-factorial terminal open subset of $X$ is an isomorphism. 
\end{lem}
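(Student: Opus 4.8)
The plan is to derive the statement from the existence of $\Q$-factorial terminalizations in the absolute sense, which is a standard consequence of \cite{BCHM}, and then upgrade it to the relative statement about isomorphisms over terminal $\Q$-factorial open subsets. First I would fix a log resolution $g\:Y\to X$, writing $K_Y = g^*K_X + \sum a_i E_i$ with $a_i\in\Q$; since $X$ is normal there is a well-defined canonical divisor, and this is a purely birational bookkeeping identity. Then, following \cite[Corollary 1.4.3]{BCHM} applied to the pair $(Y,\sum (1-\epsilon_i) E_i)$ for suitable small $\epsilon_i>0$ chosen so that the pair is klt (or, cleaner, applying the construction of a terminalization as in \cite{KollarMori-Bir_geometry} or the survey references already cited), one runs a relative $(K_Y + \Gamma)$-MMP over $X$ and terminates, obtaining $f\:X'\to X$ with $X'$ $\Q$-factorial, terminal, and $K_{X'}$ being $f$-nef. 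Here one must be a little careful to record that the boundary $\Gamma$ was chosen with coefficients in $(0,1)$ so that the output really extracts only divisors of non-positive discrepancy, hence is a genuine terminalization; quasi-projectivity of $X'$ is automatic since the MMP is run in the projective-over-$X$ category and $X$ is quasi-projective.

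Next I would prove the second assertion. Let $U\subseteq X$ be a $\Q$-factorial terminal open subset; I claim $f^{-1}(U)\to U$ is an isomorphism. The key point is that $f$ extracts no divisor over $U$: by construction every $f$-exceptional prime divisor $E$ has discrepancy $a_{X,0}(E)\le 0$, but if the center of $E$ met $U$ then, since $U$ is terminal, we would have $a_{U,0}(E)>0$, and discrepancies do not depend on the model and are insensitive to passing to an open subset containing the center — contradiction. Hence $f$ is a small proper birational morphism over $U$. Now both $f^{-1}(U)$ and $U$ are normal and $\Q$-factorial, and a small proper birational morphism between $\Q$-factorial varieties is an isomorphism: indeed if $C$ is a curve contracted by $f$ over $U$, pick a prime divisor $D$ on $f^{-1}(U)$ meeting $C$ properly, then $f_*D$ is a $\Q$-Cartier divisor on $U$ whose pullback is $f^*(f_*D) = D + (\text{exceptional part})$, but $f$ small means the exceptional part is zero, so $D\cdot C = f^*(f_*D)\cdot C = f_*D\cdot f_*C = 0$, and one can arrange $D\cdot C\ne 0$; this forces the exceptional locus over $U$ to be empty, and a proper birational morphism with empty exceptional locus onto a normal variety is an isomorphism by Zariski's main theorem.

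The main obstacle is really just citing the correct form of the termination result from \cite{BCHM} and matching conventions: one wants a terminalization, i.e.\ the output extracts exactly the divisors with discrepancy $\le 0$ and has $K_{X'}$ relatively nef, rather than a mere klt or dlt modification. I would handle this by invoking the standard packaging of these MMP outputs (the existence of $\Q$-factorial terminalizations is explicitly recorded in, e.g., \cite[Corollary 1.4.3]{BCHM} together with the discussion in \cite{KollarMori-Bir_geometry}), and by remarking that terminality of $X'$ is verified by running the infimum \eqref{eq:discr} on a single log resolution, as noted after that equation. The only other subtlety is to make sure quasi-projectivity is preserved, which is fine because the whole program takes place over the quasi-projective base $X$ with $X'$ projective over $X$. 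Everything else is the elementary small-contraction argument above, which I would state concisely rather than in full detail.
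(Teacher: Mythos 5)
There is a genuine gap in your existence argument: it silently assumes that $K_X$ is $\Q$-Cartier. You open with a log resolution $g\:Y\to X$ and the identity $K_Y=g^*K_X+\sum a_iE_i$, calling it ``purely birational bookkeeping''; but for a general normal variety the canonical class is only a Weil divisor class, $g^*K_X$ is undefined, and discrepancies over $X$ do not exist. Likewise \cite[Corollary 1.4.3]{BCHM} is a statement about klt pairs, so your route needs $(X,\Gamma)$ klt (in particular $K_X+\Gamma$ $\Q$-Cartier) for a suitable boundary. Lemma \ref{lem:terminalization}, however, is stated for an arbitrary normal quasi-projective $X$, and this generality is actually used: in the paper the lemma is applied to the normalized graph $\widetilde{\Gamma}$ of a pencil (Definition \ref{def:thrifty-reso}, Corollary \ref{prop:controled-discrepancy}, Proposition \ref{prop:ConePencilGoodres}), for which no $\Q$-Gorenstein or klt hypothesis is available. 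The paper sidesteps the issue by quoting results that are proved without such hypotheses: a terminal modification $g\:Y\to X$ with $K_Y$ $g$-nef \cite[Theorem 1.33]{Kollar-Singularities_of_MMP} followed by a small $\Q$-factorialization $h\:X'\to Y$ \cite[Corollary 1.37]{Kollar-Singularities_of_MMP}; the composition is the desired $\Q$-factorial terminalization. Your construction would have to be replaced by (or reduced to) such a statement, since ``extract exactly the divisors of non-positive discrepancy'' has no meaning when $K_X$ is not $\Q$-Cartier.

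For the ``furthermore'' part your outline is close to the paper's, but note one difference. You get ``every $f$-exceptional divisor has discrepancy $\leq 0$'' \emph{by construction}, so your argument only covers the particular model you built; the paper instead deduces it for an \emph{arbitrary} $\Q$-factorial terminalization from the defining property that $K_{X'}$ is $f$-nef, via the Negativity Lemma applied to $K_{X'}-f^*K_U$ over the $\Q$-factorial terminal open set $U$ (where $K_U$ \emph{is} $\Q$-Cartier). This stronger form is what the later applications require, since thrifty resolutions are arbitrary $\Q$-factorial terminalizations of $\widetilde{\Gamma}$, not a preferred constructed one. Your concluding step — terminality of $U$ forces $f$ to be small over $U$, and a small proper birational morphism onto a $\Q$-factorial normal variety has empty exceptional locus, hence is an isomorphism — is correct and is exactly the content of the paper's citation of \cite[VI.1, Theorem 1.5]{Kollar-Rational_curves_on_alg_var}; your explicit intersection-theoretic verification of that fact is fine.
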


\begin{proof} By \cite[Theorem 1.33]{Kollar-Singularities_of_MMP} there exists a proper birational morphism $g\:Y\to X$ such that $Y$ is quasi-projective and terminal and $K_Y$ is $g$-nef. By \cite[Corollary 1.37]{Kollar-Singularities_of_MMP} there exists a proper birational morphism $h\:X'\to Y$ such that $X'$ is quasi-projective terminal and $\Q$-factorial and $h$ is small, i.e.\ does not contract any divisor. Then $h\circ g$ is a $\Q$-factorial terminalization.

Given an open subset $U\subseteq X$, the restriction $f|_{f^{-1}(U)}$ is a $\Q$-factorial terminalization of $U$, so without loss of generality we may assume that $U=X$. By assumption $K_{X'}$ is $f$-nef, so the divisor $K_{X'}-f^*K_X=\sum_{E\in  \cE(f)} a_X(E)E$ is $f$-nef. Since $X'$ is $\Q$-factorial and the latter divisor is contracted by $f$, the Negativity Lemma \cite[3.39(1)]{KollarMori-Bir_geometry} gives $a_X(E)\leq 0$ for each $E\in \cE(f)$. Since $X$ is terminal, we infer that $\cE(f)=\emptyset$, that is, $\Exc(f)$ has codimension at least $2$. But $X$ is also $\Q$-factorial, so \cite[VI.1, Theorem 1.5]{Kollar-Rational_curves_on_alg_var} implies that $\Exc(f)=\emptyset$. Thus $f$ is an isomorphism. 
\end{proof}

\subsection{Fano varieties and Mori fiber spaces} 

\begin{defn}[Fano variety and its index] \label{def:Fano} A \emph{Fano variety} is a normal projective variety whose anti-canonical divisor is ample (in particular, $\Q$-Cartier). 
\end{defn}

Let $X$ be a klt Fano variety. By the Kawamata-Viehweg vanishing theorem, see \cite[Theorem 2.70]{KollarMori-Bir_geometry} or \cite[Theorem 5.2.7]{Matsuki}, we have  $H^i(X,\cO_X)=0$ for all $i>0$. The linear equivalence on $X$ coincides with numerical and homological equivalence \cite[Proposition 2.1.2]{Iskovskikh_Prokhorov-Fano_varieties}. In particular, $\Pic X\cong H^2(X,\Z)\cong \NS(X)$. It is also known that $X$ is simply connected \cite{Takayama-pi_1(lt_Fano)} and rationally connected \cite[Theorem 1]{Zhang-Fanos_rationally_connected}.

\medskip

Recall \cite[\S 2.1 and Example 19.1.4]{Fulton-Intersection_theory} that for a (possibly non-normal) complete algebraic variety $X$ the quotient $\NS(X)$ of the Picard group of $X$  by the relation of numerical equivalence is a finitely generated free abelian group, whose rank $\rho(X)$ is called the \emph{Picard rank} of $X$. For a surjective morphism of complete varieties $f\:X\to B$ we put $\rho(X/B)=\rho(X)-\rho(B)$. 

Note that a Fano variety of Picard rank one is not necessarily $\Q$-factorial in general, see for instance Example \ref{ex:Q-fact} below. In contrast, a Fano variety $X$ with class group $\Cl(X)\cong \Z$ is automatically $\Q$-factorial, as the image of the natural inclusion $\Pic(X)\to \Cl(X)$ is a nontrivial subgroup of finite index. For a Fano variety $X$ with class group $\Cl(X)\cong \Z$ the \emph{Fano index} of $X$ is the positive integer $i_X$ such that $-K_X\sim i_X H$ for some ample generator $H$ of $\Cl(X)$.

\medskip

A morphism $f\:X\to B$ between quasi-projective varieties is called a \emph{contraction} if it is proper, surjective and $f_*\cO_X=\cO_B$. The latter condition implies connectedness of fibers of $f$ and in case $B$ is normal it is equivalent to it.

\begin{Rem}[Contractions from varieties with $\rho=1$] \label{rem:trivial contractions} 
A projective variety of Picard rank one has only trivial contractions. Indeed, let $f\:X\to B$ be a contraction from such a variety onto a positive dimensional variety $B$.  Note that since $f$ is proper and $B$ is quasi-projective, $B$ is projective. Let $H$ be an effective ample Cartier divisor on $B$. By Kleiman's criterion a divisor numerically equivalent to an ample divisor is ample, so since $\rho(X)=1$, $f^*H$ is an effective ample Cartier divisor on $X$. For every irreducible curve $C$ on $X$, it follows from the projection formula  \cite[Proposition 2.3(c)]{Fulton-Intersection_theory} that $H\cdot f_*(C)=(f^*H)\cdot C>0$. Thus, $f\:X\to B$ is a proper morphism which does not contract any curve, hence is a finite morphism.  Since  $f_*\cO_X=\cO_B$ by assumption, $f$ is an isomorphism.
\end{Rem}

\begin{defn}[Mori fiber spaces and completions]\label{dfn:MFS}\
\begin{enumerate}[(a)]
\item A  \emph{Mori fiber space} is a $\Q$-factorial terminal projective variety $X$ endowed with a contraction $f\:X\to B$ onto a lower-dimensional normal variety $B$ such that $\rho(X/B) = 1$ and $-K_X$ is $f$-ample. 
\item  Two Mori fiber spaces $f_i\: X_i \to B_i$, $i=1,2$,  are called \emph{weakly square birational equivalent} if there exist birational maps $\varphi\: X_1\map X_2$ and $\varphi'\: B_1 \map B_2$ such that $f_2 \circ \varphi = \varphi' \circ f_1$.
\item Given a quasi-projective variety $U$, a \emph{Mori fiber completion of $U$} is a Mori fiber space whose total space is a completion of $U$.
\end{enumerate}
\end{defn}

It follows from the definition that general fibers of a Mori fiber space are Fano varieties. Since the total space is assumed to be terminal,  by \cite[Proposition 7.7]{Kollar-Singularities_of_pairs} general fibers are terminal too. We note that weakly square birational equivalent Mori fiber spaces are \emph{square birational equivalent} in the sense of \cite[Definition 1.2]{Corti-Sing_of_lin_sys} if the induced morphism on generic fibers is an isomorphism.

\smallskip We have the following notion of rigidity of varieties. See \cite{Cheltsov-rigid_Fano} and \cite{Pukhlikov-bir_rigid} for related results.

\begin{defn}[Birationally rigid varieties]\label{def:rigid}
A Fano variety is called \emph{birationally rigid} if it has no birational maps to Mori fiber spaces other than its own birational automorphisms. It is called \emph{super-rigid} if additionally all its birational automorphisms are regular.
\end{defn}

In particular, positive-dimensional birationally rigid varieties are non-rational. We have the following analogous affine notions, see \cite{CDP17}.

\begin{defn}[Affine Fano varieties]\label{def:Affine-Fano}  An \emph{affine Fano variety} is an affine variety which admits a completion by a purely log terminal log pair $(X,S)$ such that $X$ is a (normal projective) $\Q$-factorial variety of Picard rank one, $S$ is prime and $-(K_X+S)$ is ample.
\end{defn}

\begin{defn}[Affine super-rigid Fano varieties]\label{def:affine_rigid}
An affine Fano variety $U$ is \emph{super-rigid} if it satisfies the following conditions:
\begin{enumerate}[(a)]
\item $U$ does not contain Zariski open subsets which are relative affine Fano varieties over varieties of positive dimension.  
\item For every completion $(X,S)$ of $U$ and every log pair $(X',S')$ as in Definition \ref{def:Affine-Fano}, if there exists an isomorphism $U\cong X'\setminus S'$ then it extends to an isomorphism $X\cong X'$ mapping $S$ onto $S'$. 
\end{enumerate}
\end{defn} 

Note that $\A^1$ is the only affine Fano curve and it is super-rigid. It follows from the definition that a super-rigid affine Fano variety of dimension $\geq 2$ does not contain open $\A^1$-cylinders, that is, open subsets isomorphic to the product of $\A^1$ with a variety of smaller dimension.

\medskip \section{Pencils and their resolutions}\label{sec:Weil-Pencils}

We recall the correspondence between dominant rational maps $\psi\:X\map \P^1$ on a normal variety $X$ and linear systems of Weil divisors of projective dimension $1$ on $X$. In the smooth case it restricts to the  well-known correspondence between such maps and one-dimensional linear systems of Cartier divisors, see e.g.\ \cite{Dolgachev-Classical_AG_modern_view}. For this purpose we use the correspondence between Weil divisors and coherent reflexive sheaves of rank one, see \cite[Section 5.2]{Ishii-Intro_to_singularities}, \cite{Reid-canonical_3folds} or \cite[Appendix]{Schwede-reflexive}. For a sheaf $\cF$ on $X$ we denote the sheaf $\HHom_{\cO_X}(\cF,\cO_X)$, dual to $\cF$, by $\cF^{\vee}$.

\subsection{Divisorial sheaves}\label{ssec:Divisorial_sheaves}

Let $X$ be a normal variety and let $\cK_{X}$ be its sheaf of rational functions. For a Weil divisor $D$ on $X$ the \emph{divisorial sheaf associated to} $D$ is the unique subsheaf of $\cO_{X}$-modules of $\cK_{X}$ whose sections over every open subset $U$ of $X$ are \[\cO_{X}(D)(U)=\{f\in\cK_{X}^{*},\,\div(f)|_{U}+D|_{U}\geq0\}\cup\{0\}.\]  The sheaf $\cO_{X}(D)$ is a coherent reflexive sheaf of rank one. It is invertible if and only if $D$ is Cartier. If $D$ is effective then $\cO_X(-D)= \cO_X(D)^\vee$ is the ideal sheaf $\cI_D$ of $D$, which is a coherent reflexive subsheaf of $\cO_X$ of rank one. Conversely, every coherent reflexive sheaf of rank one $\cF$ on $X$ embeds into $\cK_{X}$ and for each embedding $i\:\cF\hookrightarrow\cK_{X}$  there is a unique Weil divisor $D$ on $X$ such that $\Im(i)=\cO_{X}(D)$. We henceforth use the term \emph{divisorial sheaf} to refer to any coherent reflexive sheaf of rank one on $X$. We note that $\cO_{X}(0)=\cO_{X}$, because on a normal variety a rational function with no poles is regular, \cite[Theorem 11.5]{Matsumura}. More generally, given a divisorial sheaf $\cF$ and an open subset $j\:U\hookrightarrow X$ such that $\codim(X\setminus U)\geq2$, the natural homomorphism $\cF\to j_{*}(\cF|_{U})$ is an isomorphism. 

The correspondence $D \mapsto \cO_X(D)$ induces an isomorphism between the class group $\Cl(X)$ of $X$ and the set of isomorphism classes of divisorial sheaves endowed with the group law defined by $\cF\hat{\otimes}\cF'=(\cF\otimes \cF')^{\vee\vee}$. In case $\cF$ or $\cF'$ is invertible, there is a canonical isomorphism $\cF\otimes\cF'\cong (\cF\otimes\cF')^{\vee\vee}$. The inclusion of the smooth locus $j\:X_\reg\hookrightarrow X$ induces an isomorphism $$j^*\:\Cl(X)\to \Cl(X_\reg)\cong \Pic(X_\reg),$$ whose inverse is given by associating to an invertible sheaf $\cN$ on $X_\reg$ the divisorial sheaf $(j_*\cN)^{\vee\vee}$ on $X$. The canonical sheaf of $X$ is $\omega_X=(j_*\det(\Omega^1_{X_\reg}))^{\vee\vee}\cong \cO_X(K_X)$.

\begin{example}[The quadric cone in $\P^3$]\label{ex:quadric_cone} Let $\P$ be the projective cone in $\P^3$ over a smooth plane conic. It is isomorphic to the weighted projective plane $\P(1,1,2)$ with weighted homogeneous coordinates $x_0$, $x_1$, $x_2$; see Section \ref{sec:Cone_construction}. Let $H=\{x_0=0\}$.  Since $H\cong \P^1$ is irreducible and $\P\setminus H\cong \A^2$ has a trivial class group, the class group of $\P$ is isomorphic to $\Z$ and is generated by $H$. The divisor $2H$ is Cartier, but $H$ itself is not Cartier, equivalently, the divisorial sheaf $\cO_{\P}(H)$ is not invertible. Indeed, the open subset $U=\P\setminus \{x_2=0\}$ is isomorphic to the affine quadric cone $\{x^2-yz=0\}\subseteq \A^3$ and the restriction of $\cO_{\P}(H)$ to $U$ is isomorphic to the divisorial sheaf $\cO_U(D)$ associated to the Weil divisor $D=\{x=y=0\}$. The latter is not Cartier, since the ideal $(x,y)=H^0(U, \cO_U(-D))$ of $R=\C[U]$ is non-principal.  

Note that $H^0(U, \cO_U(D))$ is equal to the $R$-submodule of $\mathrm{Frac}(R)$ generated by $1$ and $\frac{z}{x}$ and that, putting $D'=\{z=x=0\}$ and $s=\tfrac{z}{x}$, we have $\div(s)=\div(z)-\div(x)=2D'-(D'+D)=D'-D$. In particular, $s\in H^0(U, \cO_U(D))$ and $D'\sim D$. Note also that the natural $\cO_U$-linear homomorphism $\cO_U(-D)\otimes \cO_U(-D)\to \cO_U(-2D)$ is not an isomorphism, as on global sections it gives the inclusion of ideals $(xy,y^2,x^2)=(y)\cdot (x,y,z)\hookrightarrow (y)$.
\end{example}

\subsection{Pencils of Weil divisors} %\label{ssec:resolutions_of_pencils} 

Let $X$ be a normal variety and let $\psi\: X \map \P^1$ be a dominant rational map. Let $j\:U=\dom(\psi)\hookrightarrow X$ be the inclusion of the domain of definition of $\psi$ and let $\psi_U:=\psi|_U\:U\to \P^1$. Since $X$ is normal, $X\setminus U$ is a closed subset of codimension $\geq 2$ of $X$. Put  $V=H^0(\P^1,\mathcal{O}_{\P^1}(1))$. The invertible sheaf $\cN=\psi_U^*\mathcal{O}_{\P^1}(1)$ on $U$ extends to a unique divisorial sheaf $\cF=(j_*\cN)^{\vee\vee}$ on $X$ and the sections $\psi_U^*(s)$, $s\in V$, extend uniquely to global sections of $\cF$. The obtained homomorphism $\psi^*\:V\to H^0(X,\cF)$ is injective and its image is a $2$-dimensional linear subspace $\caL\subseteq H^0(X,\cF)$. 

The scheme-theoretic fibers $U_p=\psi_{U}^{*}(p)$ of $\psi_{U}$ over the closed points $p$ of $\P^1=\Proj(\Sym^{\boldsymbol{\cdot}}(V))$ are linearly equivalent Cartier divisors on $U$ for which  $\cO_{U}(U_p)\cong\psi_{U}^{*}\cO_{\P^{1}}(1)$. For every $p\in\P^{1}$ we denote by $\caL_p$ the scheme-theoretic closure of $\psi_{U}^{*}(p)$ in $X$. Since $X$ is normal and $\codim_{X}(X\setminus U)\geq2$, these are linearly equivalent Weil divisors on $X$ for which $\cO_{X}(\caL_p)\cong\cF$. Denote by $\cI_{U_p}\subseteq\cO_{U}$ the ideal sheaf of $U_p$. Then the ideal sheaf $\cI_{\caL_p}\subseteq\cO_{X}$ of $\caL_p$ is equal to $(j_{*}\cI_{U_p})^{\vee\vee}\cong\cO_{X}(-\caL_p)\cong\cF^{\vee}$. 
 
In what follows, we call the subspace $\caL\subseteq H^0(X,\cF)$ the \emph{pencil of (Weil) divisors on $X$} defining the rational map $\psi\:X\dashrightarrow \P^1$. The divisors $\caL_p$, $p\in \P^1$ are called the \emph{members} of $\caL$.  The \emph{base scheme} of the pencil, denoted by $\Bs \caL$, is the scheme-theoretic intersection in $X$ of all members of the pencil. 
 
\begin{prop}[Pencils and their associated rational maps]\label{lem:pencil-as-vectorspaces}
Let $X$ be a normal algebraic variety. There exists a natural bijection between the set of dominant rational maps $\psi\:X\dashrightarrow \P^1$ and the set of equivalence classes of pairs $(\cF,\caL)$, where $\cF$ is a divisorial sheaf on $X$ and $\caL\subseteq H^0(X,\cF)$ is a $2$-dimensional space of global sections generating $\cF$ off a closed subset of codimension $\geq 2$, where two such pairs  $(\cF,\caL)$ and $(\cF',\caL')$ are equivalent if there exists an isomorphism $\alpha\:\cF \to \cF'$ for which $H^0(\alpha)(\caL)=\caL'$. 
 \end{prop}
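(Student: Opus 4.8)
The plan is to exhibit natural assignments in both directions and check that they are mutually inverse, the one genuine subtlety being the systematic use of the fact recalled in Section~\ref{ssec:Divisorial_sheaves} that a divisorial sheaf, together with its global sections, is determined by its restriction to any open subset whose complement has codimension $\geq 2$. In the forward direction one reads off a pair from $\psi$ exactly as in the discussion preceding the statement: with $U=\dom(\psi)$, $j\:U\hookrightarrow X$ and $V=H^0(\P^1,\cO_{\P^1}(1))$, put $\cF=(j_*\psi_U^*\cO_{\P^1}(1))^{\vee\vee}$ and $\caL=\psi^*(V)\subseteq H^0(X,\cF)$. One must check this pair is admissible, i.e.\ that $\caL$ generates $\cF$ off a closed set of codimension $\geq 2$: since $\cO_{\P^1}(1)$ is generated by $V$, the restriction $\cF|_U\cong\psi_U^*\cO_{\P^1}(1)$ is generated by $\caL$ on all of $U$, and $X\setminus U$ has codimension $\geq 2$ because $X$ is normal. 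Passing to equivalence classes defines a map $\Phi$ from dominant rational maps $X\map\P^1$ to equivalence classes of admissible pairs.

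For the reverse direction, start from an admissible pair $(\cF,\caL)$ and let $W\subseteq X$ be the largest open subset on which $\cF$ is invertible and generated by $\caL$; then $X\setminus W$ has codimension $\geq 2$, being contained in the union of $\Sing X$ with the locus where $\caL$ fails to generate $\cF$. Choosing a basis $(s_0,s_1)$ of $\caL$, equivalently a linear isomorphism $V\to\caL$, the sections $s_0,s_1$ have no common zero on $W$ and so define a morphism $W\to\P^1$ pulling $\cO_{\P^1}(1)$ back to $\cF|_W$; composing with $W\hookrightarrow X$ gives a rational map $\psi\:X\map\P^1$. It is dominant, since otherwise some nonzero $\mu_0 s_0+\mu_1 s_1$ would vanish on $W$, hence in $H^0(X,\cF)$ because $\cF\cong j_*(\cF|_W)$ and $\codim(X\setminus W)\geq 2$, contradicting $\dim\caL=2$. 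This $\psi$ depends on the chosen basis only up to post-composition with an automorphism of $\P^1$, and an isomorphism $\alpha\:\cF\to\cF'$ with $H^0(\alpha)(\caL)=\caL'$ takes a basis of $\caL$ to one of $\caL'$ defining the same morphism off codimension $2$; so the construction descends to a well-defined assignment $\Psi$ on equivalence classes of admissible pairs (the residual automorphism ambiguity being precisely what gets pinned down by the datum $V=H^0(\P^1,\cO_{\P^1}(1))$ on the source side).

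It remains to check $\Psi\circ\Phi=\id$ and $\Phi\circ\Psi=\id$. Given $\psi$, the pair $\Phi(\psi)$ carries the tautological basis $\psi^*x_0,\psi^*x_1$ of $\caL=\psi^*(V)$, and the morphism $W\to\P^1$ rebuilt from it agrees with $\psi_U$ on the dense open $U\cap W$, hence equals $\psi$ as a rational map. Given an admissible pair $(\cF,\caL)$ with associated morphism $\varphi\:W\to\P^1$, the sheaf $(j_*\varphi^*\cO_{\P^1}(1))^{\vee\vee}$ is canonically isomorphic to $\cF$ — both are divisorial and agree on $W$, whose complement has codimension $\geq 2$ — and this isomorphism carries $\varphi^*(V)$ onto $\caL$, so the output pair is equivalent to $(\cF,\caL)$; naturality under open restrictions (and dominant base changes) of $X$ is then immediate. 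The routine portion is this last paragraph of verifications; the only point that requires consistent care throughout is the passage between $X$ and the large open loci $U$ and $W$, where $\cF$ is a genuine line bundle and $\psi$ a genuine morphism, which in every instance rests on the reflexivity statement recalled above.
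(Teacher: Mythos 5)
Your write-up follows the same route as the paper's proof: the forward assignment is the identical double-dual construction $(\cF,\caL)=\bigl((j_*\psi_U^*\cO_{\P^1}(1))^{\vee\vee},\psi^*V\bigr)$, the inverse restricts to the large open subset $W$ on which $\cF$ is invertible and generated by $\caL$ and then uses the universal property of $\P^1$, and every comparison step rests, exactly as in the paper, on the isomorphism $\cF\cong j_*(\cF|_W)$ for $\codim(X\setminus W)\geq 2$. Your explicit dominance argument (a non-dominant map would force some nonzero $\mu_0s_0+\mu_1s_1$ to vanish on $W$, hence on $X$, contradicting $\dim\caL=2$) is a small but genuine addition that the paper leaves implicit in its appeal to \cite[Proposition 7.12]{Hartshorne}.

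The one step to tighten is the well-definedness of your $\Psi$. You correctly observe that the morphism built from $(\cF,\caL)$ depends on the chosen basis of $\caL$ up to post-composition with an automorphism of $\P^1$, but the parenthetical claim that this ambiguity is ``pinned down by the datum $V=H^0(\P^1,\cO_{\P^1}(1))$'' is not an argument: an equivalence class of pairs carries no preferred isomorphism $\caL\cong V$, so nothing in the data selects one map inside the $\mathrm{Aut}(\P^1)$-orbit. What your construction honestly yields is a bijection between equivalence classes of pairs and dominant rational maps up to automorphisms of the target (i.e.\ pencils in the classical sense); to land on rational maps themselves one must rigidify, for instance by recording a marked basis of $\caL$ (equivalently an injection $V\hookrightarrow H^0(X,\cF)$) and requiring the equivalences to respect it. You are in good company: the paper's own proof elides exactly the same point, since asking that $\mathrm{e}|_W$ ``equal'' the pull-back of the canonical surjection $V\otimes_\C\cO_{\P^1}\to\cO_{\P^1}(1)$ already presupposes an identification of $\caL$ with $V$, and the ``if and only if'' in its injectivity step holds only modulo $\mathrm{Aut}(\P^1)$ (the pairs attached to $\psi$ and to $\sigma\circ\psi$ with $\sigma\in\mathrm{Aut}(\P^1)$ are equivalent). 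So either drop the parenthetical and state the correspondence up to $\mathrm{Aut}(\P^1)$, or rigidify as above; with that adjustment the remaining verifications in your last paragraph go through as written.
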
 

\begin{proof}
We already described above how to associate to a dominant rational map $\psi\:X\dashrightarrow \P^1$ a pair $(\cF,\caL)=(\cF_\psi,\caL_\psi):=((j_*\cN)^{\vee\vee},\psi^*V)$. Consider the natural homomorphism 
\begin{equation}\label{eq:e}
\mathrm{e}\:\caL\otimes_\C \cO_X\to \cF
\end{equation}
defined by restricting global sections to stalks of $\cF$. By construction, the restriction of $\psi$ to $U=\dom(\psi)$ is isomorphic to the pull-back by $\psi_U\:U\to \P^1$ of the canonical surjection $V\otimes_\C \cO_{\P^1}\to \cO_{\P^1}(1)$. Thus the sections contained in $\caL$ generate $\cF$ outside the indeterminacy locus $X\setminus U$ of $\psi$, which is a closed subset of codimension $\geq 2$ of $X$.

Conversely, let  $\cF$ be a divisorial sheaf on $X$ and let $\caL\subseteq H^0(X,\cF)$ be  a $2$-dimensional space of global sections such that the support $Z$ of the cokernel of the homomorphism \eqref{eq:e} has codimension $\geq 2$ in $X$. Since $\cF$ is divisorial  and $X$ is normal, the set $\cF_{\mathrm{sing}}$ of points $x$ of $X$ such that $\cF_{x}$ is not a free $\cO_{X,x}$-module  is a closed subset of codimension $\geq2$. Thus, $W:=W(\cF,\caL)=X\setminus(Z\cup\cF_{\mathrm{sing}})$  is an open subset of $X$ with a complement of codimension $\geq2$, on which $\mathrm{e}$ restricts to a surjection $\mathrm{e}|_W\:\cO_{W}^{\oplus2}\cong \caL\otimes_\C\cO_{W}\to\cF|_{W}$ onto the invertible sheaf $\cF|_{W}$. By \cite[Proposition 7.12]{Hartshorne} there exists a unique dominant morphism $f=f_{\cF,\caL}\:W\to \P^1$ such that $\mathrm{e}|_W$ is equal to the  pull-back by $f$ of the canonical surjection $V\otimes_\C \cO_{\P^1}\to \cO_{\P^1}(1)$. This morphism determines in turn a unique rational map $\psi\:X\map \P^1$ whose domain of definition $U$ contains $W$ and for which $\psi_U|_W=f$. 

Two pairs $(\cF,\caL)$ and $(\cF',\caL')$ determine the same dominant rational map $\psi\:X\dashrightarrow \P^1$ if and only if their associated morphisms $f_{\cF,\caL}\:W(\cF,\caL)\to \P^1$ and $f_{\cF',\caL'}\:W(\cF',\caL')\to \P^1$ coincide on the open subset $\widetilde{W}=W(\cF,\caL)\cap W(\cF',\caL')$. This is in turn equivalent to the existence of an isomorphism $\alpha_{\widetilde{W}}\:\cF|_{\widetilde{W}}\to \cF'|_{\widetilde{W}}$ of sheaves on $\widetilde{W}$ which maps the global sections $s|_{\widetilde{W}}$ of $\cF|_{\widetilde{W}}$, $s\in \caL$, bijectively onto the global sections $s'|_{\widetilde{W}}$ of $\cF'_{\widetilde{W}}$, $s'\in \caL'$. Since $X$ is normal, $\codim_X(X\setminus \widetilde{W})\geq 2$ and $\cF$ and $\cF'$ are reflexive, $\alpha_{\widetilde{W}}$ uniquely extends to an isomorphism $\alpha\:\cF \to \cF'$ of sheaves over $X$ such that $H^0(\alpha)(\caL)=\caL'$. So the association $(\cF,\caL)\mapsto \psi_{\cF,\caL} $ induces a well-defined injective map from the set of equivalence classes of pairs $(\cF,\caL)$ to the set of dominant rational maps $\psi\:X\map \P^1$. This map is also surjective, because the equality $\psi=\psi_{\cF_\psi,\caL_\psi}$  holds for every dominant rational map $\psi\:X\map \P^1$. 
\end{proof}

By a \emph{resolution of $\caL$} we mean a resolution of the associated rational map $\psi$. Given two  linearly equivalent Weil divisors $D$ and $D'$ on $X$ without common irreducible component, the \emph{pencil $\langle D,D'\rangle$ generated by $D$ and $D'$} is the pencil of divisors on $X$, unique up to an isomorphism, which has $D$ and $D'$ among its members. Its  base scheme is equal to the scheme-theoretic intersection of $D$ and $D'$.

\subsection{The graph of a pencil}% \label{ssec:resolutions_of_pencils} 

Let $\psi\:X\map \P^1$ be a dominant rational map on a normal variety $X$. The \emph{graph of $\psi$} is the scheme-theoretic closure $\Gamma\subseteq X \times \P^1$ of the graph of the restriction of $\psi$ to its domain of definition. We let $\gamma\:\Gamma\to X$ and $\pp\:\Gamma\to \P^1$ be the restrictions of the projections from $X\times \P^1$ onto its factors. We obtain a commutative diagram 
\begin{equation}\label{fig:graph}
\xymatrix
   { & \Gamma \ar[dl]_{\gamma} \ar[dr]^{\pp} & \\
     X \ar@{-->}[rr]_{\psi} & & \P^1
   }
\end{equation}
The proper birational morphism  $\gamma\:\Gamma \to X$ provides a natural resolution of $\psi$ such that $\psi\circ\gamma=\pp$. The next proposition collects properties of this resolution. 

\begin{prop}[Properties of the graph resolution]\label{lem:canonical-resol} \label{lem:graph-reso-members} 
Let $X$ be a normal variety. Let $\psi\:X\map \P^1$ be a dominant rational map and let $\caL$ be the associated pencil of divisors. 
Then the following hold: 
\begin{enumerate}[(a)]
\item For every resolution $\tau\:X'\to X$ of $\psi$ the birational map $\gamma^{-1}\circ\tau\:X'\map \Gamma$ is a proper morphism. 
\item The indeterminacy locus of $\psi$ is equal to the support of the base scheme $\Bs\caL$. 
\item The morphism $\gamma$ restricts to an isomorphism over $X\setminus \Bs\caL$, and $\gamma^{-1}(x)\cong \P^1$ for every $x\in \Bs\caL$.
\item For every point $p\in \P^1$ the birational morphism $\gamma_p\:\pp^*(p) \to \caL_p$ induced by $\gamma$ is an isomorphism. 
\end{enumerate}
\end{prop}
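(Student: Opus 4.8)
The plan is to prove the four assertions more or less in the order stated, using that $\Gamma$ is by construction a closed subscheme of $X\times\P^1$ and that $\mathrm{p}\colon\Gamma\to\P^1$ is induced by the second projection, while $\gamma\colon\Gamma\to X$ is induced by the first. Throughout I write $U=\dom(\psi)$ and $\psi_U=\psi|_U$, and I use that $X\setminus U$ has codimension $\geq 2$ because $X$ is normal.

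For part (a), let $\tau\colon X'\to X$ be a resolution of $\psi$, so $\psi\circ\tau\colon X'\to\P^1$ is a morphism. Then $(\tau,\psi\circ\tau)\colon X'\to X\times\P^1$ is a morphism, and over the dense open $\tau^{-1}(U)$ it factors through the graph of $\psi_U$, hence through $\Gamma$; since $\Gamma$ is closed in $X\times\P^1$ and $X'$ is reduced (a variety), the morphism $(\tau,\psi\circ\tau)$ has image contained in $\Gamma$, so it factors as $X'\to\Gamma\hookrightarrow X\times\P^1$. The factoring morphism $X'\to\Gamma$ agrees with $\gamma^{-1}\circ\tau$ on the dense open subset where both are defined, hence equals it; being a morphism to a closed subscheme of $X\times\P^1$ composed with $X'\to X\times\P^1$, and since $\Gamma\to X\times\P^1$ is proper (closed immersion) and $X'\to X\times\P^1$ is proper ($X'$ and $X$ being varieties with $\tau$ proper, and $\P^1$ proper over the ground field), we conclude $\gamma^{-1}\circ\tau\colon X'\to\Gamma$ is proper. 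For (b), the inclusion $\mathrm{Ind}(\psi)\subseteq\Supp\Bs\caL$ follows because at a point where all members of $\caL$, i.e.\ all $\gamma$-images of fibers $\mathrm{p}^*(p)$, meet, $\psi$ cannot be a morphism (its value would have to be every point of $\P^1$); conversely, away from $\Bs\caL$ the evaluation homomorphism $\mathrm{e}\colon\caL\otimes_\C\cO_X\to\cF$ of \eqref{eq:e} is surjective onto the locally free locus, so by \cite[Proposition 7.12]{Hartshorne}, as in the proof of Proposition \ref{lem:pencil-as-vectorspaces}, $\psi$ is a morphism there. (One must be slightly careful to first reduce to the locus where $\cF$ is locally free, which also has codimension $\geq 2$ complement, and check that the base locus of the pencil and the indeterminacy locus of $\psi$ genuinely coincide as sets; this reduction is routine given Proposition \ref{lem:pencil-as-vectorspaces}.)

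For (c), on $X\setminus\Bs\caL=U\setminus\Bs\caL$ wait — more precisely, by (b) we have $X\setminus\Bs\caL\subseteq U$ (up to taking supports), so $\gamma$ is an isomorphism over $X\setminus\Supp\Bs\caL$ because there $\psi$ is a morphism and the graph of a morphism projects isomorphically to the source. For the fiber over a point $x\in\Supp\Bs\caL$: after restricting to an affine neighborhood where $\cF$ is trivialized, $\psi$ is given by a ratio $[s_0:s_1]$ of two regular functions cutting out the two members through $x$, and $\Gamma$ near $x$ is the closure of $\{(y,[s_0(y):s_1(y)]) : s_0(y)\text{ or }s_1(y)\neq 0\}$ in $(\text{nbhd of }x)\times\P^1$. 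The fiber $\gamma^{-1}(x)$ is then the set of $[t_0:t_1]\in\P^1$ that are limits along curves through $x$ of the values $[s_0:s_1]$; since $\cF$ is locally free in codimension $1$ and $x$ has a curve through it on which $s_0,s_1$ are not both zero, taking a normalized curve through $x$ and varying the tangent direction one realizes every point of $\P^1$ as such a limit. I would phrase this cleanly via: $\gamma^{-1}(x)$ is a closed subscheme of $\{x\}\times\P^1\cong\P^1$, it is nonempty (properness of $\gamma$), it is not a single point (otherwise $\psi$ would extend over $x$, contradicting $x\in\mathrm{Ind}(\psi)$ by (b)), and it is connected (e.g.\ since $\gamma$ is proper birational with $X$ normal, by Zariski's main theorem its fibers are connected, or one can invoke that $\Gamma\to X$ is a blow-up-like morphism); a connected closed subscheme of $\P^1$ containing more than one point is all of $\P^1$. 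I will need $\gamma^{-1}(x)$ reduced — this follows since $\Gamma$ is the scheme-theoretic closure of a reduced scheme, hence reduced, and a reduced subscheme of $\P^1$ that is all of $\P^1$ topologically is $\P^1$ scheme-theoretically.

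For (d), fix $p\in\P^1$. The scheme-theoretic fiber $\mathrm{p}^*(p)$ is a Cartier divisor on $\Gamma$ (pullback of the reduced point $p$, which is Cartier on $\P^1$), and $\gamma_p\colon\mathrm{p}^*(p)\to\caL_p$ is the restriction of the proper birational morphism $\gamma$. Over the open $U\setminus\Bs\caL$ (complement of codimension $\geq 2$ in $X$, hence intersecting $\caL_p$ in a dense open) $\gamma$ is an isomorphism and carries $\mathrm{p}^*(p)$ isomorphically onto $U_p$, which is dense in $\caL_p$; so $\gamma_p$ is birational. To upgrade to an isomorphism I will argue that $\gamma_p$ is finite (then, being birational onto the normal-in-codimension-$1$... — actually $\caL_p$ need not be normal, so I instead argue directly): $\mathrm{p}^*(p)$ as a Cartier divisor in $\Gamma$ has no embedded or isolated components contracted by $\gamma$, because any such component would be contained in a fiber $\gamma^{-1}(x)\cong\P^1$ for $x\in\Supp\Bs\caL$, but $\gamma^{-1}(x)$ maps onto all of $\P^1$ under $\mathrm{p}$ (by the description in (c), $\mathrm{p}|_{\gamma^{-1}(x)}$ is an isomorphism $\gamma^{-1}(x)\xrightarrow{\sim}\P^1$), hence meets $\mathrm{p}^*(p)$ in exactly one point and cannot lie inside it. Therefore $\gamma_p$ contracts nothing, so it is quasi-finite and proper, hence finite, and it is birational; since $\caL_p=\gamma(\mathrm{p}^*(p))$ and $\gamma_*\cO_\Gamma=\cO_X$ (as $\gamma$ is proper birational with $X$ normal, by Zariski's main theorem), one gets $(\gamma_p)_*\cO_{\mathrm{p}^*(p)}=\cO_{\caL_p}$ and a finite birational morphism with this property is an isomorphism. \textbf{The main obstacle} I anticipate is part (d): one must handle the fact that $\caL_p$ and the fibers $\mathrm{p}^*(p)$ may be non-reduced or non-normal, so the slick "finite birational onto a normal variety is iso" is unavailable, and I must instead carefully track scheme structures — showing $\mathrm{p}^*(p)$ is $\gamma$-ample-free, that it picks out exactly the expected closed subscheme, and that $(\gamma_p)_*\cO=\cO_{\caL_p}$ — which is where the genuine work lies. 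The description of $\gamma^{-1}(x)$ in (c) and its transversality to each fiber $\mathrm{p}^*(p)$ is the technical engine that makes (d) go through.
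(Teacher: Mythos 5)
Parts (a)--(c) of your proposal are correct and essentially coincide with the paper's proof: (a) is the same factor-through-the-closed-graph argument, (b) rests on the same evaluation map $\mathrm{e}$ and the identification $\cI_{\caL_p}\cong\cF^\vee$ (you merely attach each of your two arguments to the wrong-named inclusion), and in (c) the ``limits along curves'' heuristic is not a proof, but the clean argument you substitute for it --- connectedness of the fibers of $\gamma$ plus the Zariski-main-theorem extension argument excluding a one-point fiber over a point of $\Bs\caL$ --- is exactly the paper's argument.

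The genuine gap is in (d), at the very step you flag as ``where the genuine work lies''. From ``$\gamma_p$ is finite, injective and birational'' together with $\gamma_*\cO_\Gamma=\cO_X$ you cannot deduce $(\gamma_p)_*\cO_{\pp^*(p)}=\cO_{\caL_p}$: that implication is false in general. For instance, let $\gamma\:\Gamma\to X$ be the blow-up of a point $O$ of a smooth surface and let $D\subseteq\Gamma$ be the proper transform of a cuspidal cubic with cusp at $O$; then $\gamma_*\cO_\Gamma=\cO_X$, $D$ is Cartier, contains no $\gamma$-exceptional curve, and maps onto its image finitely, bijectively and birationally, yet $D\to\gamma(D)$ is the normalization and not an isomorphism, and $(\gamma|_D)_*\cO_D\neq\cO_{\gamma(D)}$. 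So the surjectivity of $\cO_{\caL_p}\to(\gamma_p)_*\cO_{\pp^*(p)}$ --- i.e.\ the assertion that $\caL_p$ is not a ``pinched'' version of the fiber --- is precisely the content of (d), and it must exploit the specific structure of $\pp^*(p)$ (zero scheme of the pullback of a linear form from $\P^1$) and of $\caL_p$ (scheme-theoretic closure of $\psi_U^*(p)$, with reflexive ideal $(j_*\cI_{U_p})^{\vee\vee}\cong\cF^\vee$); your hypotheses alone do not give it. A second, smaller, unaddressed point: you need $\gamma|_{\pp^*(p)}$ to factor scheme-theoretically through $\caL_p$, i.e.\ that local sections of $\cI_{\caL_p}$ pull back to multiples of the local equation of $\pp^*(p)$; since $\Gamma$ need not be normal and $\gamma^{-1}(\Supp\Bs\caL)$ can be a divisor, this too requires an argument. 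The paper settles both issues at once by a direct local computation: taking $X=\Spec(A)$ and $\psi$ given by $h\in\Frac(A)$, it identifies $\caL_p$ with the zero scheme of all $\mu f-\lambda g$ over representatives $h=f/g$, and $\pp^*(p)\subseteq X\times\P^1$ with the zero scheme of $\mu u-\lambda v$ and all $fv-gu$, and reads the isomorphism off these equations. Your observation that $\pp^*(p)$ contains no $\gamma$-exceptional component (each fiber $\gamma^{-1}(x)\cong\{x\}\times\P^1$ meets it in a single point) is correct and yields finiteness, but by itself it cannot close (d), since a finite bijective birational map onto a non-normal $\caL_p$ need not be an isomorphism.
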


\begin{proof}
Set $U=\dom(\psi)$ and let $\pr_X\:X\times \P^1\to X$ denote the projection. 

(a) Since $\psi\circ \tau$ is a morphism, we have a morphism $\tau\times (\psi\circ \tau)\: X'\to X\times \P^1$. Since $X'$ is irreducible and $\tau$ is surjective, the image is contained in $\Gamma$, so we may write this morphism as a composition of some morphism $\tau'\:X'\to \Gamma$ with the closed immersion $\Gamma\hookrightarrow X\times \P^1$. Both morphisms $\gamma$ and $\tau=\gamma\circ\tau'$ are proper, so $\tau'$ is proper too.

(b) With the notation of the proof of Proposition \ref{lem:pencil-as-vectorspaces}, $\psi$ is represented by the morphism $f\:W\to \P^1$ associated to the restriction of $\mathrm{e}\:\caL\otimes_\C \cO_X\to \cF$ to the open subset $W=X\setminus (\Supp(\Coker\ e)\cup \cF_{\Sing})$. Let $x$ be a point of $X$ which is not contained in the support of some member $\caL_p$. Then $\cI_{\caL_{p},x}=\cO_{X,x}$, and since  $\cI_{\caL_{p}}\cong \cF^{\vee}$, it follows that $\cF^{\vee}_x$, and hence $\cF_x$ is a free $\cO_{X,x}$-module. Thus $\cF_{\Sing} \subseteq \Supp(\Coker\ e)$ and we have $U=X\setminus \Supp(\Coker\ e)=X\setminus \Supp (\Bs\caL)$.

(c) By the definition of $\Gamma$, $\gamma$ restricts to an isomorphism over $U$. Suppose that for some $x\in \Bs\caL$, $\pr_X^{-1}(x)\cong \P^1$ is not fully contained in $\Gamma$. Since $X$ is normal and $\gamma\:\Gamma\to X$ is proper and birational, it follows from \cite[Proposition 4.4.1]{EGAIII-1} that the fibers of $\gamma$ are connected, and hence $\gamma^{-1}(x)=\pr_X^{-1}(x)\cap \Gamma$ consists of a unique point $y$, and that there exists an open neighborhood $V$ of $y$ such that  $\gamma|_V\:V\to X$ is an open immersion.  Thus the birational map $\gamma^{-1}$ is defined at $x$, and so is $\psi=\pp\circ\gamma^{-1}$. But this is impossible, because $x\in \Bs\caL=X\setminus U$.

(d) The assertion is local over $X$, so we can assume without loss of generality that $X$ is affine, say  $X=\Spec(A)$. Then $\psi$ is induced by some rational function $h\in \Frac(A)$. Given a representative $h=f/g$, where $f\in A$, $g\in A\setminus\{0\}$, let $U_{(f,g)}=X\setminus V(f,g)$. The restriction of $\psi$ to $U_{(f,g)}$ is given by $x\mapsto [f(x):g(x)]$. For every point $p=[\lambda:\mu]\in \P^1$ the restriction of the Cartier divisor $\psi_U^*(p)$ to $U_{(f,g)}$ is the zero scheme of the regular function $s_{(f,g)}=\mu f-\lambda g$. Letting $\cS=\{(f,g)\in A\times (A\setminus\{0\}),\; h=f/g\}$ we have $U=\bigcup_{(f,g)\in \cS}U_{(f,g)}$. The scheme-theoretic closure $\caL_p\subseteq X$  of $\psi_U^*(p)$ is defined by the vanishing of all functions $s_{(f,g)}$. On the other hand, the graph $\Gamma\subseteq X\times \P^1_{[u:v]}$ is defined by the vanishing of all sections $$\widetilde{s}_{(f,g)}=fv-gu\in H^0(X\times \P^1,\cO_{X\times \P^1}(1)),$$ and hence $\pp^*(p)\subseteq X\times \P^1$ is defined by the vanishing of the section $\mu u- \lambda v$ and all $\widetilde{s}_{(f,g)}$. From this, we see directly that  $\gamma_p\:\pp^*(p)\to X_p$ is an isomorphism. 
\end{proof}

\begin{Rem} Let $X$ be a normal variety and let $\caL\subseteq H^0(X,\cF)$ be a pencil. The defining ideal sheaf $\cI_\caL$ of the base scheme $\Bs\caL$ can be described as follows. Consider the homomorphism 
\begin{equation}\label{eq:ev}
\ev\:\caL\otimes_\C \cF^\vee\to \cO_X
\end{equation}
obtained from the evaluation homomorphism $\mathrm{e}\:\caL\otimes_\C \cO_X\to \cF$  \eqref{eq:e} by tensoring it with $\cF^{\vee}$ and composing with the canonical homomorphism $\cF\otimes_{\cO_X} \cF^{\vee} \to \cO_X$. Then the ideal sheaf $\cJ=\Im(\ev)$ is generated by the ideal sheaves $\cI_s$ of the zero schemes of the sections $s$ of $\cF$, $s\in \caL$. On the other hand, the members of $\caL$ are, by definition, the Weil divisors associated to the zero schemes of these sections, that is, the closed subschemes of $X$ with defining ideal sheaves $\cI_s^{\vee\vee}$, $s\in \caL$. So $\cI_\caL$ is generated by the ideal sheaves $\cI_s^{\vee\vee}$, $s\in \caL$. It follows that $\cJ\subseteq \cI_\caL$, with equality in case when  $\cF$ is invertible. Indeed, if $\cF$ is invertible then each $\cI_s$ is an invertible ideal sheaf, so $\cI_s=\cI_s^{\vee\vee}$.
\end{Rem}

For a pencil $\caL$ on a normal variety $X$, Proposition \ref{lem:canonical-resol}(b) says that $\gamma\:\Gamma \to X$ is a universal minimal resolution of the dominant rational map $\psi\:X\map \P^1$ determined by $\caL$, in the sense that every resolution of $\psi$ factors through it. Another natural resolution of $\psi$ is given by the blow-up $\tau\:\Bl_{\Bs\caL}(X)\to X$ of the base scheme of $\caL$, as shown in the following lemma. In case when members of $\caL$ are Cartier, it is a classical fact that the induced birational morphism  $\gamma^{-1}\circ \tau\:\Bl_{\Bs\caL}(X)\to\Gamma$ is an isomorphism; see e.g.\  \cite[Proposition 7.12 and \S 7.1.3]{Dolgachev-Classical_AG_modern_view}. This is no longer true in general for pencils whose members are not Cartier, see Example \ref{ex:non-iso-Blowup-discr}.

\begin{lem}[The blowup of $\Bs \caL$ is a resolution] Let $X$ be a normal variety. Let $\psi\:X\dashrightarrow \P^1$ be a dominant rational map determined by a pencil of divisors $\caL$. Then the blow-up  $\tau\:\Bl_{\Bs\caL}(X)\to X$ of the base scheme of $\caL$ is a resolution of $\psi$.
\end{lem}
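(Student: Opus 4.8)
The plan is to reduce the statement to a local computation with the rational function cut out by the pencil. Fix a basis $s_{0},s_{1}$ of $\caL\subseteq H^{0}(X,\cF)$. As $X$ is a variety and $s_{0},s_{1}$ are linearly independent sections of the rank-one divisorial sheaf $\cF$, their quotient $h:=s_{0}/s_{1}$ is a well-defined nonconstant element of $\cK_{X}=\C(X)$, and by the correspondence of Proposition \ref{lem:pencil-as-vectorspaces} the map $\psi$ is the rational map $[s_{0}:s_{1}]=[h:1]\:X\map\P^{1}$. The elementary fact I will use is that \emph{for a variety $Y$ and a nonconstant $h\in\C(Y)^{*}$, the rational map $[h:1]\:Y\map\P^{1}$ is a morphism if and only if the coherent fractional ideal sheaf $\mathfrak{a}_{h}:=\cO_{Y}+h\cdot\cO_{Y}\subseteq\cK_{Y}$ is invertible}: at a point $y$ where $\mathfrak{a}_{h,y}=(d)$ is principal one has $1/d,\ h/d\in\cO_{Y,y}$ and $(1/d,h/d)=\tfrac1d\mathfrak{a}_{h,y}=\cO_{Y,y}$, so $[h:1]=[h/d:1/d]$ is represented near $y$ by two regular functions generating the unit ideal, and conversely.

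Next I identify the ideal sheaf $\cI_{\Bs\caL}$ of the base scheme. Let $\caL_{p}$ be the member of $\caL$ cut out by $a_{p}s_{0}+b_{p}s_{1}$, so that $\caL_{p_{0}},\caL_{p_{1}}$ correspond to $s_{0},s_{1}$. Since the difference of the Weil divisors of two sections of $\cF$ is the principal divisor of their ratio, $\caL_{p}-\caL_{p_{1}}=\div(a_{p}h+b_{p})$, whence $\cO_{X}(-\caL_{p})=(a_{p}h+b_{p})\cdot\cO_{X}(-\caL_{p_{1}})$ inside $\cK_{X}$. As $\cI_{\Bs\caL}=\sum_{p\in\P^{1}}\cO_{X}(-\caL_{p})$ by definition of the base scheme and the scalars $a_{p}h+b_{p}$ span $\C\cdot1+\C\cdot h$, this gives the factorization
\begin{equation*}
\cI_{\Bs\caL}=\bigl(\cO_{X}+h\cdot\cO_{X}\bigr)\cdot\cO_{X}(-\caL_{p_{1}})=\mathfrak{a}_{h}\cdot\cO_{X}(-\caL_{p_{1}})
\end{equation*}
as fractional ideal sheaves on $X$.

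Now set $\tau\:X'=\Bl_{\Bs\caL}(X)\to X$. Here $\cI_{\Bs\caL}\neq 0$, so $X'$ is a variety and $\tau$ is proper and birational (it is an isomorphism over the dense open set $X\setminus\Supp\Bs\caL$, the complement of the indeterminacy locus of $\psi$). By the defining property of the blow-up, $\cI_{\Bs\caL}\cdot\cO_{X'}$ is invertible. Pulling back the factorization yields
\begin{equation*}
\cI_{\Bs\caL}\cdot\cO_{X'}=\bigl(\cO_{X'}+h\cdot\cO_{X'}\bigr)\cdot\bigl(\cO_{X}(-\caL_{p_{1}})\cdot\cO_{X'}\bigr),
\end{equation*}
a product of two nonzero fractional ideal sheaves whose product is invertible. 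Since a product $\mathfrak{p}\mathfrak{q}$ of fractional ideal sheaves can be invertible only if each factor is (locally, $\mathfrak{p}\mathfrak{q}=(c)$ forces $\mathfrak{p}\cdot(c^{-1}\mathfrak{q})=\cO$), the sheaf $\mathfrak{a}_{h}=\cO_{X'}+h\cdot\cO_{X'}$ is invertible on $X'$. By the first paragraph $\psi\circ\tau=[h:1]$ is then a morphism, so $\tau$ resolves $\psi$, hence $\caL$.

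The step needing the most care — and the only place where the non-Cartier nature of the members enters — is the factorization of $\cI_{\Bs\caL}$: one must remember that the members of $\caL$ are the Weil divisors attached to the sections $s\in\caL$ through the reflexive hulls $\cI_{s}^{\vee\vee}$ of their zero schemes, so that $\cI_{\Bs\caL}$ is in general strictly larger than the ideal $\cJ$ spanned by the $\cI_{s}$ of the Remark above. Computing through the rational function $h$ (equivalently, over the smooth locus of $X$, where $\cF$ is invertible) bypasses this; it is also why $X'$ need not agree with the graph resolution $\Gamma$, cf.\ Example \ref{ex:non-iso-Blowup-discr}.
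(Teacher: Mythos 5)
Your proof is correct, and while it rests on the same basic mechanism as the paper's argument --- after the blow-up the pulled-back ideal of $\Bs\caL$ is invertible, and local principality lets one rewrite $\psi\circ\tau$ near any point by regular functions with no common zero --- you package it in a genuinely different way. The paper reduces to $X=\Spec(A)$ affine, describes the ideal of $\Bs\caL$ through the whole family of fraction representations $h=f/g$, and after blowing up picks one $\tau^*f$ (or $\tau^*g$) generating the invertible pulled-back ideal at a given point, so that locally $\psi\circ\tau=[1:\widetilde{g}]$. You instead argue globally: you isolate the criterion that $[h:1]$ is a morphism if and only if the fractional ideal sheaf $\cO+h\cdot\cO$ is invertible, you prove the factorization $\cI_{\Bs\caL}=(\cO_X+h\cdot\cO_X)\cdot\cO_X(-\caL_{p_1})$ using that any two members of the pencil differ by the principal divisor of the ratio of the defining sections, and you finish with the standard fact that a factor of an invertible fractional ideal is invertible. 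That last step is only gestured at in your parenthesis: after reducing to $\mathfrak{p}\cdot(c^{-1}\mathfrak{q})=\cO$ in a local ring, one should note that in a relation $\sum p_iq_i=1$ some product $p_iq_i$ is a unit, whence $\mathfrak{p}=(p_i)$ is principal; this is classical but worth recording. Your route buys a structural explanation of why blowing up the a priori non-Cartier base scheme still resolves $\psi$ (the base ideal is, up to the ideal of a single member, exactly the fractional ideal governing $[h:1]$), and it fits well with the paper's identification $\cI_{\caL_p}\cong\cO_X(-\caL_p)$; the paper's proof is more elementary and self-contained, using nothing beyond the universal property of the blow-up and a direct choice of local generators.
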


\begin{proof} Let $\widetilde{X}=\Bl_{\Bs\caL}(X)$. To verify that $\psi\circ \tau\:\widetilde{X}\map \P^1$ is a morphism we can assume without loss of generality that $X=\Spec(A)$ is affine and  that $\psi$ is the rational map defined by some rational function $h\in\Frac(A)$. With the notation of the proof of Proposition \ref{lem:canonical-resol}(c), $\Bs\caL$ is the closed subscheme of $X$ whose defining ideal $I$ generated by the regular functions $f$ and $g$ such that $(f,g)\in \cS$. By definition of the blow-up, the ideal sheaf $\cJ=\tau^{-1}(I)\cdot \cO_{\widetilde{X}}$ is invertible. It is generated by all regular functions $\tau^*f$ and $\tau^*g$, where $(f,g)\in \cS$. Thus for every point  $y_0\in \widetilde{X}$ there exists an element $(f,g)\in \cS$ such that the stalk $\cJ_{y_0}$ is generated by $\tau^*f$ or $\tau^*g$, say $\tau^*f$, the situation being symmetric for $\tau^*g$. It follows that there exists an open neighborhood $V\subseteq \widetilde{X}$ of $y_0$ and a regular function $\widetilde{g}$ on $V$ such that $\tau^*g|_V=\widetilde{g}\tau^*f|_V$. On this neighborhood the composition $\psi\circ \tau\:\widetilde{X}\map \P^1$ is given by $$y\mapsto [(\tau^*f)(y):(\tau^*g)(y)]=[1:\widetilde{g}(y)],$$ so $y_0\in \dom(\psi\circ \tau)$. Thus $\psi\circ \tau\:\widetilde{X}\map \P^1$ is a morphism, and hence $\tau\:\widetilde{X}\to X$ is a resolution of $\psi\:X\map \P^1$. 
\end{proof}

\smallskip \section{Terminal rank one Fano pencils and associated Mori fiber spaces} \label{sec:pencils_and_resolutions}

Our strategy to construct Mori fiber completions of affine spaces is to use pencils of Weil divisors on normal projective completions of $\A^n$'s and to produce the desired Mori fiber spaces as outputs of relative MMP's ran from suitable resolutions of these pencils. For this approach to work we need in particular to find properties of a variety $X$ and of the members of the pencil whose combination guarantees that a specific open subset is preserved under appropriate choices of resolutions and  relative MMP's.

\subsection{$\Q$-factorial terminal resolutions}\label{sub:QFactTerm-res}\label{subsec:terminal-pencil}

Let $X$ be a normal variety. Let $\caL$ be a pencil of divisors determining a dominant rational map $\psi\:X\map \P^1$. Let $\tau\:X'\to X$ be a resolution of $\caL$, that is, a proper birational morphism from a variety such that $\psi\circ \tau$ is a morphism. By Proposition \ref{lem:canonical-resol} there exists a unique morphism $\Gamma(\tau)\:X'\to \Gamma$ such that $\tau=\gamma\circ\Gamma(\tau)$ and a commutative diagram
\begin{equation}\label{fig:graph}
\xymatrix
   {X' \ar@{->}[rr]^{\Gamma(\tau)} \ar[dr]_{\tau}& & \Gamma \ar[dl]_{\gamma} \ar[dr]^{\pp} & \\
    & X \ar@{-->}[rr]_{\psi} & & \P^1
   }
\end{equation}
where $(\Gamma,\gamma,\pp)$ are as in  \eqref{fig:graph}. Let $\nu\:\widetilde\Gamma\to \Gamma$ be the normalization of $\Gamma$. We call $\widetilde\Gamma$ the \emph{normalized graph} of $\psi$. If $X'$ is normal then we have $\Gamma(\tau)=\nu\circ \widetilde\Gamma(\tau)$ for some unique birational proper morphism $$\widetilde\Gamma(\tau)\:X'\to \widetilde\Gamma.$$

\begin{defn}[A thrifty resolution] \label{def:thrifty-reso} Let $X$ be a normal variety and let $\tau\:X'\to X$ be a resolution of a pencil $\caL$ on $X$.
\begin{enumerate}[(a)]
\item We call the image $\delta(\tau)\subseteq\P^1$ of $\Exc \Gamma(\tau)$ by $\pp\circ\ \Gamma(\tau)$ the \emph{discrepancy locus of $\tau$}. 
\item We say that $\tau$ is \emph{$\Q$-factorial terminal} if $X'$ is $\Q$-factorial terminal. 
\item We say that a $\Q$-factorial terminal resolution $\tau$ is \emph{thrifty} if $\widetilde \Gamma(\tau)\:X'\to \widetilde{\Gamma}$ is a $\Q$-factorial terminalization.
\end{enumerate}
\end{defn}

The discrepancy locus is a rough measure of how much a given resolution of $\psi\:X\dashrightarrow \P^1$ differs from the (minimal) graph resolution.

\begin{example}[The affine cone in $\A^4$]\label{ex:non-iso-Blowup-discr} On the affine cone $X=\{xv-yu=0\}\subseteq \A^4$ the Weil divisors $\caL_0=\{u=v=0\}$ and $\caL_\infty=\{x=y=0\}=\caL_0+\div(x/u)$ generate a pencil $\caL$, whose base scheme $\Bs \caL$ is equal to the isolated singular point $p=(0,0,0,0)\in X$. Since the latter has codimension $3$ in $X$, $\caL_0$ is not $\Q$-Cartier. In particular, $X$ is not $\Q$-factorial. The associated rational map is \[\psi_\caL\:X\map  \P^1_{[w_0:w_1]},\quad (x,y,u,v)\mapsto [u:x]=[v:y].\] Its graph $\Gamma$ is isomorphic to the sub-variety of $X\times \P^1_{[w_0:w_1]}$ defined by the equations \[ xw_0-uw_1=0 \quad \textrm{and} \quad yw_0-vw_1=0.\]
The morphism $\pp\:\Gamma\to \P^1$ is a locally trivial $\A^2$-bundle, so $\Gamma$ is smooth. The morphism $\gamma\:\Gamma\to X$ is a thrifty $\Q$-factorial terminal resolution of $\caL$ with an empty discrepancy locus. It is a small resolution of the singularity $p\in X$ with the exceptional locus consisting of a single curve $\gamma_\caL^{-1}(p)\cong \P^1_{[w_0:w_1]}$. It can be also described as the blow-up of the ideal sheaf of $\caL_0$.

On the other hand, the blow-up $\tau_\caL\:\widetilde{X} \to X$ of $\Bs\caL=\{p\}$ is a resolution of the singularity of $X$ with exceptional divisor $\tau_\caL^{-1}(p)\cong \P^1\times \P^1$. In particular, it is a $\Q$-factorial terminal resolution of $\caL$. The birational proper morphism $\tau':=\Gamma(\tau_\caL)\:\widetilde X\to \Gamma$ contracts $\tau_\caL^{-1}(p)$ onto $\gamma^{-1}(p)$. Since the proper transform by $\tau'$ of every closed fiber of $\pp\:\Gamma\to \P^1$ is isomorphic to the blow-up of the origin in $\A^2$ the discrepancy locus of $\tau_\caL$ is equal to $\P^1$.
 \end{example}

A $\Q$-factorial terminal resolution $\tau\:X'\to X$ with a finite discrepancy locus induces isomorphisms between general fibers of $\pp\:\Gamma\to \P^1$ and their proper transforms on $X'$. The latter are general fibers of  $\pp\circ\ \Gamma(\tau)\:X'\to \P^1$, and since $X'$ is terminal, they are terminal varieties by \cite[Proposition 7.7]{Kollar-Singularities_of_pairs}. On the other hand, by Proposition \ref{lem:graph-reso-members}(d) the fibers of $\pp\:\Gamma\to \P^1$ are isomorphic to the members of the pencil $\caL$ determining $\psi\:X\map \P^1$. The terminality of general members of $\caL$ is thus a necessary condition for the existence of a $\Q$-factorial terminal resolution of $\caL$.  This motivates the following definition: 

\begin{defn}[A terminal $\Q$-factorial pencil]\label{dfn:Q-factorial-terminal-pencil}  A \emph{terminal pencil} (a \emph{$\Q$-factorial terminal pencil}) on a normal variety is a pencil whose general members are terminal (respectively, $\Q$-factorial terminal).  
\end{defn}

In contrast to terminality, the $\Q$-factoriality of general members of $\caL$ is not necessary for the existence of a $\Q$-factorial terminal resolution of $\psi\:X\dashrightarrow \P^1$, as illustrated by the following example. 
 
\begin{example}[$\Q$-factoriality: general vs generic]  \label{ex:Q-fact}
Let $X=\P^4_{[x:y:z:t:w]}$ and let $F$ and $H$ be the projective cone over the smooth conic  $\{xy-z^2=0\}\subseteq \P^2_{[x:y:z]}$ and the hyperplane $\{t=0\}$, respectively. Denote by $\caL$ be the pencil generated by $F$ and $2H$. A general member of $\caL$ is isomorphic to the projective cone $Z$ in $\P^4$ over the quadric surface $\{xy-z^2+t^2=0\}\subseteq \P^3$. The blowup of the vertex is smooth and the exceptional divisor has discrepancy $1$, so $Z$ is a terminal Fano variety. But it is not $\Q$-factorial. Indeed, its class group is isomorphic to $\Z\oplus \Z$ and is generated for instance by the non-$\Q$-Cartier Weil divisor $\{x=t-z=0\}$ and the hyperplane section $Z\cap \{w=0\}$. 

On the other hand, the graph of the rational map $$\psi\:X \map \P^1, \; [x:y:z:t:w]\mapsto [xy-z^2:t^2]$$ determined by $\caL$ is isomorphic to the subvariety $\Gamma\subseteq X\times \P^1_{[u_0:u_1]}$ with equation $(xy-z^2)u_1+t^2u_0=0$. The generic fiber of $\pp\:\Gamma\to \P^1$ is isomorphic to the projective cone $Y\subseteq \P^4_{\C(\lambda)}$ over the quadric threefold in $\P^3_{\C(\lambda)}$ with equation $xy-z^2+\lambda t^2=0$, where $\lambda=u_0/u_1$. As a variety over $\C(\lambda)$, $Y$ is factorial, with the class group $\Cl(Y)=\Pic(Y)\cong \Z$ generated by the hyperplane section $\{w=0\}$. It follows that $\Gamma$ is factorial (in particular $\Q$-factorial), with the class group generated by the proper transform of $H$ and the hyperplane section $\{w=0\}$.   
\end{example}

By the following criterion, on projective varieties the terminality of a pencil is equivalent to the existence of one terminal member. 

\begin{lem}[One terminal member is sufficient] \label{lem:Q-term-crit} A pencil on a normal projective variety which has at least one terminal ($\Q$-factorial terminal) member is a terminal (respectively $\Q$-factorial terminal) pencil. 
\end{lem}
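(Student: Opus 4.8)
The plan is to use the semicontinuity of discrepancies in families together with the graph resolution from Proposition~\ref{lem:canonical-resol}. Let $\caL$ be a pencil on a normal projective variety $X$, with associated rational map $\psi\:X\map\P^1$, graph $\Gamma$, and projections $\gamma\:\Gamma\to X$, $\pp\:\Gamma\to\P^1$. By Proposition~\ref{lem:graph-reso-members}(d), for each $p\in\P^1$ the fiber $\pp^*(p)$ is isomorphic to the member $\caL_p$. So it suffices to show: if one fiber $\pp^*(p_0)$ is terminal (respectively $\Q$-factorial terminal), then so is $\pp^*(p)$ for a general $p\in\P^1$. First I would pass to a resolution $\mu\:W\to\Gamma$ such that $\mu$ is an isomorphism over a neighborhood of the generic point of $\pp^*(p_0)$, $W$ is smooth, and the total transform of $\pp^*(p_0)$ together with $\Exc(\mu)$ is a simple normal crossings divisor; such a $\mu$ exists by Hironaka, and the composition $q=\pp\circ\mu\:W\to\P^1$ is a morphism whose fiber over $p_0$ can be read off on $W$.

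The key step is semicontinuity. Consider $q\:W\to\P^1$. Shrinking $\P^1$ around $p_0$ if necessary, we may assume $q$ is flat with reduced fibers over a neighborhood, that $W$ is smooth there, and that the central fiber $W_{p_0}$ is an snc divisor dominating $\caL_{p_0}=\pp^*(p_0)$; we can also arrange (generic smoothness in characteristic zero) that the general fiber $W_p$ is smooth. Then $W_p\to\caL_p$ is a resolution of singularities for general $p$, and $W_{p_0}\to\caL_{p_0}$ is a log resolution. Terminality of $\caL_{p_0}$ is detected on $W_{p_0}$ by the criterion recalled after equation~\eqref{eq:discr} (\cite[Corollaries 2.12, 2.13]{Kollar-Singularities_of_MMP}): every $\mu$-exceptional (relative to $\caL_{p_0}$) divisor in $W_{p_0}$ has positive discrepancy. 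The discrepancy $a_{\caL_p}(E_p)$ of the proper transform $E_p$ of an exceptional divisor varies lower-semicontinuously — in fact it is locally constant along the smooth family on the locus where the relevant divisors stay exceptional — because the relative canonical divisor $K_{W/\P^1}$ restricts to $K_{W_p}$ on fibers by adjunction (each fiber being a Cartier divisor in the smooth $W$), so the discrepancy numbers in the formula $K_{W_p}=\rho_p^*K_{\caL_p}+\sum a_{\caL_p}(E)E$ are pulled back from the relative statement on $W$. Hence if all these numbers are $>0$ at $p_0$, they remain $>0$ for general $p$, giving that $\caL_p$ is terminal. The $\Q$-factoriality statement is handled separately by Lemma~\ref{lem:rational_sing}(b): a terminal variety is klt, hence has rational singularities, so $\Cl(\caL_p)/\Pic(\caL_p)$ is finitely generated; and in a flat family the class group and Picard rank of fibers are upper-semicontinuous, so if the special member $\caL_{p_0}$ is $\Q$-factorial (meaning $\Cl\otimes\Q=\Pic\otimes\Q$), the general member is too — concretely, one can spread out a finite set of generators of $\Cl(\caL_{p_0})$ that become $\Q$-Cartier and note these specialize from $\Q$-Cartier divisors on nearby fibers.

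I expect the main obstacle to be the $\Q$-factoriality half. Terminality is a discrepancy computation and semicontinuity of discrepancies in a smooth family is essentially formal once one has the snc log resolution arranged over $p_0$. But $\Q$-factoriality is not a discrepancy condition, it is a condition on the Néron–Severi versus class group, and passing it from a special to a general fiber requires knowing that $\rho$ and $\dim_\Q(\Cl\otimes\Q)$ behave semicontinuously along the family, plus the finiteness from rational singularities to keep everything in the finite-dimensional world. Making this rigorous will likely require restricting to a suitable open locus of $\P^1$ over which the family of Néron–Severi groups is well behaved (e.g.\ using that $H^2$ of the fibers is finitely generated and locally constant for a smooth family, as in the proof of Lemma~\ref{lem:rational_sing}(b) via the comparison with the analytification) and then invoking that a Weil divisor which is $\Q$-Cartier at the special fiber deforms. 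I would present this carefully as the technical core of the argument; everything else is bookkeeping with the graph resolution and Proposition~\ref{lem:graph-reso-members}.
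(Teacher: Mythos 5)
Your setup is the same as the paper's (pass to the graph, use Proposition \ref{lem:graph-reso-members}(d) and flatness of $\pp\:\Gamma\to\P^1$ to reduce to an openness statement about fibers), but the core of your terminality argument has a genuine gap. You claim that the discrepancies in $K_{W_p}=\rho_p^*K_{\caL_p}+\sum a_{\caL_p}(E)E$ are ``pulled back from the relative statement on $W$'' and hence locally constant in $p$. There is no such relative statement available: to compare discrepancies across fibers you would need a $\Q$-Cartier (relative) canonical divisor on $\Gamma$ near the special fiber, together with adjunction identifying $(\mu^*K_\Gamma)|_{W_p}$ with $\rho_p^*K_{\caL_p}$; but $\Gamma$ is merely normal near the special fiber, and neither the $\Q$-Cartierness of $K_\Gamma$ there, nor the normality of general members $\caL_p$, nor the $\Q$-Cartierness of $K_{\caL_p}$ (without which $a_{\caL_p}(E)$ is not even defined) is known at this point --- these are part of what the lemma asserts. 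In other words, terminality of nearby fibers given one terminal fiber is not ``essentially formal''; it is the nontrivial theorem on flat deformations of terminal singularities (proved via inversion of adjunction), which is exactly what the paper invokes as \cite[Theorem 9.1.14]{Ishii-Intro_to_singularities}. Your argument as written silently assumes the conclusions of that theorem (or of a $\Q$-Cartier comparison on the total space à la Proposition \ref{lem:neighborhood-control}(b)--(c), whose hypotheses include what you are trying to prove).

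The $\Q$-factoriality half is also incomplete, and here your stated general principle is false: class groups and $\Q$-factoriality are not upper/lower semicontinuous in arbitrary flat projective families (a smooth quadric surface degenerating to a quadric cone already shows $\Q$-factoriality can fail to propagate from general to special and illustrates how $\Cl$ jumps; the direction you need, special to general, also fails without hypotheses). The correct statement is \cite[Theorem 12.1.10]{KollarMori-3d_flips}, which requires that the fibers have rational singularities and singular locus of codimension at least three --- conditions the paper first extracts from the (already established) terminality of general fibers before applying it. You flag this as ``the technical core'' to be made rigorous later, but the spreading-out of $\Q$-Cartier generators you sketch is precisely the content of the Kollár--Mori result and does not follow from finiteness of $\Cl/\Pic$ (Lemma \ref{lem:rational_sing}(b)) plus local constancy of $H^2$ alone. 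So both halves need to be replaced by (or reduced to) the cited deformation-theoretic results rather than the semicontinuity arguments you propose.
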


\begin{proof}
Since $X$ is projective and $\P^1$ is a smooth curve, $\pp\:\Gamma\to \P^1$ is a flat projective morphism.  By Proposition \ref{lem:graph-reso-members}(d), $\gamma\:\Gamma \to X$ induces isomorphisms between scheme-theoretic fibers of $\pp$ and members of $\caL$. Assume that for some $p\in \P^1$, $\caL_p$ is terminal. Then $\pp^*(p)$ is terminal and so \cite[Theorem 9.1.14]{Ishii-Intro_to_singularities} implies that general fibers of $\pp$, and hence general members of $\caL$, are terminal. As a consequence, a general fiber of $\pp$ has rational singularities and its singular locus has codimension at least three. By \cite[Theorem 12.1.10]{KollarMori-3d_flips} the $\Q$-factoriality of fibers of $\pp$ is then an open condition on the set of closed points of $\P^1$. So if $\caL_p$ is in addition $\Q$-factorial then general fibers of $\pp$, and hence the general members of $\caL$, are $\Q$-factorial terminal. 
\end{proof}

We now relate properties of members of a pencil in a neighborhood of the base locus to global properties of the graph of its associated rational map in a neighborhood of the exceptional locus of the graph resolution.

 \begin{prop}[Singularities of the graph] \label{lem:neighborhood-control} 
Let $\caL$ be a terminal pencil on a normal variety $X$ and let $Y$ be a member of $\caL$. Put $Y'=\gamma_*^{-1}Y$. Then the following hold:
\begin{enumerate}[(a)]
\item If $Y$ is normal then $\Gamma$ is normal in an open neighborhood of $Y'$.
\item If $Y$ is klt and $K_{\Gamma}$ is $\Q$-Cartier in an open neighborhood of $Y'$ then $\Gamma$ is terminal in an open neighborhood of $Y'$.
\item If $Y$ is klt, smooth in codimension $2$ and $\Q$-factorial then $\Gamma$ is $\Q$-factorial terminal in an open neighborhood of $Y'$. 
\end{enumerate}
\end{prop}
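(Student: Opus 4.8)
The plan is to analyze the graph $\Gamma$ near $Y'=\gamma_*^{-1}Y$ by exploiting the fact that, by Proposition \ref{lem:graph-reso-members}(d), $Y'$ is isomorphic via $\gamma_p$ to a member $\caL_p=Y$ of the pencil, while globally $Y'=\pp^*(p)$ is a fiber of $\pp\:\Gamma\to\P^1$, hence a prime Cartier divisor on $\Gamma$ (the pullback of a point of the smooth curve $\P^1$). The key idea is to transfer the hypotheses on $Y$ to local statements about $(\Gamma,Y')$ using inversion of adjunction (Lemma \ref{lem:different}) and the plt-to-terminal criterion (Lemma \ref{lem:plt-to-terminal}), and to control $K_\Gamma$ and $\Q$-factoriality of $\Gamma$ near $Y'$ using terminality of the nearby general members.

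\medskip
\textbf{Proof of (a).} By Proposition \ref{lem:graph-reso-members}(d) the induced morphism $\gamma_p\:Y'=\pp^*(p)\to Y=\caL_p$ is an isomorphism, so $Y'$ is normal. Since $\P^1$ is a smooth curve and $\pp$ is dominant, $\pp^*(p)=Y'$ is a Cartier divisor on $\Gamma$ which is reduced (being isomorphic to the normal, hence reduced, variety $Y$). Now $\Gamma$ is a variety, hence generically reduced along $Y'$; since $Y'$ is a Cartier divisor and is $R_1$ and $S_1$, a standard argument (e.g.\ Serre's criterion applied along $Y'$ together with the fact that an effective Cartier divisor on an $S_2$ scheme is $S_1$) shows that $\Gamma$ is $R_1$ and $S_2$ in a neighborhood of $Y'$, hence normal there. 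More precisely: since $\caL$ is a terminal pencil, general fibers of $\pp$ are terminal, hence have rational, so Cohen--Macaulay (Lemma \ref{lem:rational_sing}(a)), hence $S_2$ singularities; $Y'$ itself is normal; so $\Gamma$ is $S_2$ near $Y'$ by the fact that $\Gamma\to\P^1$ is flat with $S_2$ fibers over a regular base (miracle flatness / \cite[Theorem 23.9]{Matsumura}). Normality of the fibers $Y'$ and of the nearby general fibers forces $\Gamma$ to be $R_1$ near $Y'$. Thus $\Gamma$ is normal in a neighborhood of $Y'$.

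\medskip
\textbf{Proof of (b).} Replacing $\Gamma$ by the open neighborhood of $Y'$ furnished by part (a) and by the hypothesis, we may assume $\Gamma$ is normal and $K_\Gamma$ is $\Q$-Cartier. Then $Y'=\pp^*(p)$ is a normal prime Cartier divisor on $\Gamma$, so $K_\Gamma+Y'$ is $\Q$-Cartier and the adjunction formula $K_{Y'}=(K_\Gamma+Y')|_{Y'}$ holds by Lemma \ref{lem:different}. Since $Y'\cong Y$ is klt, the inversion of adjunction in Lemma \ref{lem:different} gives that $(\Gamma,Y')$ is plt in a neighborhood of $Y'$. Now apply Lemma \ref{lem:plt-to-terminal}: $\Gamma$ is terminal in a neighborhood of $Y'$ if and only if for every log resolution $f\:Z\to\Gamma$ of $(\Gamma,Y')$ every exceptional divisor $E$ of $f$ whose center meets $Y'$ but is not contained in $Y'$ has positive discrepancy $a_\Gamma(E)>0$. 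Such an $E$ has center $\Center_\Gamma(E)$ which is not contained in the fiber $Y'=\pp^*(p)$, hence dominates $\P^1$, so $E$ itself dominates $\P^1$ and its restriction to a general fiber $\pp^*(q)$ is a (possibly reducible) exceptional divisor for the induced log resolution of that general fiber. Since $\caL$ is a terminal pencil, a general fiber $\pp^*(q)$ is terminal, so every such exceptional divisor over it has positive discrepancy; by semicontinuity of discrepancies along the proper flat family, or simply by adjunction computing $a_\Gamma(E)$ from $a_{\pp^*(q)}(E|_{\pp^*(q)})$ via $K_{\pp^*(q)}=K_\Gamma|_{\pp^*(q)}$ (as $\pp^*(q)$ is Cartier), we get $a_\Gamma(E)\geq a_{\pp^*(q)}(E|_{\pp^*(q)})>0$. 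Hence $\Gamma$ is terminal near $Y'$.

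\medskip
\textbf{Proof of (c) and the main obstacle.} Under the hypotheses of (c), $Y$ is klt, $Y'\cong Y$ is smooth in codimension $2$, so in particular Cartier in codimension $2$, and $\Q$-factorial. Normality of $\Gamma$ near $Y'$ follows from (a). The main point is to establish that $K_\Gamma$ is $\Q$-Cartier near $Y'$, after which (b) immediately gives terminality, and then $\Q$-factoriality of $\Gamma$ near $Y'$ follows: a general fiber of $\pp$ is $\Q$-factorial terminal by Lemma \ref{lem:Q-term-crit} (applied to the pencil, once we know some nearby member is $\Q$-factorial terminal — here we can use $Y$ itself after checking it is terminal, which part (b) applied to $Y$ yields), and since general fibers are terminal they have rational singularities with singular locus of codimension $\geq 3$, so by \cite[Theorem 12.1.10]{KollarMori-3d_flips} $\Q$-factoriality of the fibers of $\pp$ is open on $\P^1$ and in particular $\Q$-factoriality of $\Gamma$ propagates from the generic fiber to $\Gamma$ itself in a neighborhood of $Y'$ via the same argument as in Lemma \ref{lem:Q-term-crit}. \emph{The hard part} is the $\Q$-Cartierness of $K_\Gamma$ near $Y'$: knowing $Y'\cong Y$ is $\Q$-factorial controls divisor classes on the central fiber, and one wants to spread this out to $\Gamma$. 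The cleanest route is: since $Y'$ is a Cartier divisor with $Y'$ itself $\Q$-factorial and smooth in codimension $2$, and since $\Cl/\Pic$ is finitely generated for varieties with rational singularities (Lemma \ref{lem:rational_sing}(b)), one shows that $\Cl(\Gamma)\to\Cl(Y')$ (restriction) has the property that the $\Q$-Cartier locus is preserved; concretely, for a Weil divisor $D$ on $\Gamma$, $D|_{Y'}$ is $\Q$-Cartier on $Y'$, and since $Y'$ is Cartier one lifts a local defining equation and uses that $\Gamma$ is $S_2$ with Cartier central fiber (the argument of \cite[Theorem 12.1.10]{KollarMori-3d_flips}) to conclude $D$ is $\Q$-Cartier near $Y'$. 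Taking $D=K_\Gamma$ and using $K_\Gamma|_{Y'}=K_{Y'}-Y'|_{Y'}=K_{Y'}$ (as $Y'$ restricted to itself is trivial, $Y'$ being a fiber), which is $\Q$-Cartier since $Y'\cong Y$ is klt hence $K_{Y'}$ is $\Q$-Cartier, we obtain that $K_\Gamma$ is $\Q$-Cartier near $Y'$. Then (b) and the openness of $\Q$-factoriality finish the proof.
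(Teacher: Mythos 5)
Parts (a) and (b) of your proposal are essentially correct and follow the same route as the paper. For (a) the paper simply cites \cite[Corollaire 5.12.7]{EGAIV-2}: since $Y'=\pp^*(\pp(Y'))$ is a normal prime Cartier divisor on $\Gamma$, the ambient space is normal in a neighborhood of it; your fibrewise $S_2$/$R_1$ argument can be completed, but the step ``normality of the fibers forces $R_1$'' is left hanging --- one should say that a codimension-one singular point whose closure meets $Y'$ is either the generic point of $Y'$ (impossible, since $Y'$ is a reduced Cartier divisor there, so $\Gamma$ is regular at that point by the argument of Lemma \ref{lem:pencil-local-smooth}) or horizontal, contradicting normality of general fibers. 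Your (b) is the paper's proof: inversion of adjunction (Lemma \ref{lem:different}), Lemma \ref{lem:plt-to-terminal}, and restriction of discrepancies of horizontal exceptional divisors to general fibers, which the paper justifies by \cite[Proposition 7.7]{Kollar-Singularities_of_pairs}.

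In (c), however, there is a genuine gap. Your first route --- general fibers of $\pp$ are $\Q$-factorial terminal by Lemma \ref{lem:Q-term-crit}, and ``$\Q$-factoriality propagates from the generic fiber to $\Gamma$'' --- does not work: Lemma \ref{lem:Q-term-crit} is stated for pencils on \emph{projective} varieties and, like \cite[Theorem 12.1.10]{KollarMori-3d_flips}, it is a statement about the \emph{fibers}, not about the total space; $\Q$-factoriality of the generic or general fibers gives no control on $\Q$-factoriality of $\Gamma$ near the special fiber $Y'$. Moreover your claim that $Y$ is terminal ``by part (b)'' is unjustified: (b) concerns $\Gamma$, and the hypotheses of (c) (klt, $\Q$-factorial, smooth in codimension $2$) do not imply terminality of $Y$ --- fortunately none of this is needed. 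Your second, ``cleanest'' route is indeed the paper's idea, but the depth input is misidentified: extending ($\Q$-)Cartierness of a Weil divisor $D$ from the Cartier fiber $Y'$ (where $D|_{Y'}$ is $\Q$-Cartier because $Y'\cong Y$ is $\Q$-factorial) to a neighborhood of $Y'$ is \cite[Lemma 12.1.8, Corollary 12.1.9]{KollarMori-3d_flips}, and these require the fiber to satisfy Serre's condition $S_3$; ``$\Gamma$ is $S_2$ with Cartier central fiber'' is not enough, since lifting a local trivialization across a Cartier divisor is a depth-three (local Grothendieck--Lefschetz type) statement. Here $S_3$ holds because $Y$ is klt, hence has rational, hence Cohen--Macaulay singularities (Lemma \ref{lem:rational_sing}(a)) --- this is exactly how the paper proceeds. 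Finally, to obtain one neighborhood that is $\Q$-factorial (rather than a separate neighborhood $V(D)$ for each divisor $D$), you must use that $\Cl(W)/\Pic(W)$ is finitely generated on the terminal neighborhood $W$ produced by (b) (Lemma \ref{lem:rational_sing}(b)) and intersect $W$ with the finitely many $V(D_i)$ for generators $D_i$; you mention finite generation but never put it to this use. With these repairs --- $S_3$ of $Y'$ via klt $\Rightarrow$ CM, the extension lemmas applied first to $K_\Gamma$ (whose restriction is $K_{Y'}$, $\Q$-Cartier since $Y$ is klt) and then to finitely many generators of $\Cl(W)/\Pic(W)$ --- your argument becomes the paper's proof of (c).
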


\begin{proof}
By Proposition \ref{lem:graph-reso-members}(d), $\gamma$ restricts to an isomorphism over each member of $\caL$. In particular, since $\caL$ is a terminal pencil, general fibers of $\pp$ are terminal. In all three cases $Y$ is normal and $Y'=\pp^*(\pp(Y'))$ is a prime Cartier divisor on $\Gamma$. By \cite[Corollaire 5.12.7]{EGAIV-2} there exists a normal open neighborhood $V\subseteq \Gamma$ containing $Y'$. This proves (a). 

(b) By Lemma \ref{lem:different} the log pair $(V, Y')$ is plt in a neighborhood of $Y'$. Let $\pi\:V'\to V$ be a log resolution of this pair and let $G$ be any exceptional prime divisor of $\pi$ whose image is not contained in a fiber of $\pp|_V$. Since $\pi$ induces a log resolution of general fibers of $\pp|_V$ and the latter are terminal, it follows that $G$ has positive discrepancy; see e.g.  \cite[Proposition 7.7]{Kollar-Singularities_of_pairs}. This implies by Lemma \ref{lem:plt-to-terminal} that $V$, and hence $\Gamma$, is terminal in an open neighborhood of $Y'$.

(c) Since klt singularities are Cohen-Macaulay by Lemma \ref{lem:rational_sing}(a), $Y'$ satisfies Serre's condition $S_3$. Since $Y'$ is Cartier, arguing as in the proofs of \cite[Corollary 12.1.9, Lemma 12.1.8]{KollarMori-3d_flips}, we conclude that for every Weil divisor $D$ on $V$ there exists an open neighborhood $V(D)\subseteq V$ of  $Y'$ such that $D|_{V(D)}$ is $\Q$-Cartier. As in (b) we get a terminal open neighborhood $W\subseteq V(K_V)$ of $Y'$. Since terminal singularities are rational, the group $\Cl(W)/\Pic(W)$ is finitely generated by Lemma \ref{lem:rational_sing}(b). The intersection of $W$ and the open neighborhoods $V(D_i)$, where the $D_i$ range through a finite set of Weil divisors whose classes generate $\Cl(W)/\Pic(W)$, is then a $\Q$-factorial terminal open neighborhood of $Y'$.
\end{proof}

\begin{nota}\label{nota:open-subsets} For a terminal pencil $\caL$ on a normal variety $X$ and a finite subset $\delta\subseteq \P^1$  we put $\Gamma_\delta=\pp^{-1}(\P^1\setminus \delta)\subseteq \Gamma $ and $ X_{\delta}= X \setminus \bigcup_{p\in \delta} \caL_p \subseteq X$. We define the following property:
\begin{enumerate}
\item[($\mathbf{TQ}_\delta$)] For every $p\in \P^1\setminus \delta$ the member $\caL_p$ is a prime divisor and on some open neighborhood of $\Bs \caL$ in $\caL_p$ it is klt, smooth in codimension $2$ and $\Q$-factorial.
\end{enumerate}
\end{nota}

Note that for a pencil $\caL$ whose general members are $\Q$-factorial terminal the condition ($\mathbf{TQ}_\delta$) holds for the finite set $\delta \subset \P^1$ consisting of points $p$ for which $\caL_p$ is not $\Q$-factorial terminal.

\begin{cor}[Controlling the discrepancy locus]\label{prop:controled-discrepancy}
Let $\caL$ be a terminal pencil on a normal variety $X$ and let $\delta \subset \P^1$ be a finite set.
If  $X_\delta$ is $\Q$-factorial terminal and  $(\mathbf{TQ}_\delta)$ holds then $\Gamma_\delta$ is $\Q$-factorial terminal. Consequently, the discrepancy locus of every thrifty $\Q$-factorial terminal resolution of $\caL$ is contained in $\delta$. 
\end{cor}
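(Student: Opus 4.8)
The plan is to prove the two assertions in turn. For the first, that $\Gamma_\delta$ is $\Q$-factorial terminal, I would exhibit an open cover of $\Gamma_\delta$ by $\Q$-factorial terminal open subsets of $\Gamma$; this is enough because a normal variety admitting a finite open cover by $\Q$-factorial terminal opens is itself $\Q$-factorial terminal (terminality is checked on local rings and is inherited by and reflected from open subsets, while for $\Q$-factoriality one clears denominators over a finite subcover). By Proposition~\ref{lem:graph-reso-members}(c) the morphism $\gamma$ is an isomorphism over $X\setminus\Bs\caL$. A direct computation using Proposition~\ref{lem:graph-reso-members}(d) identifies $\gamma^{-1}(X_\delta\setminus\Bs\caL)$ with $\Gamma_\delta\setminus\gamma^{-1}(\Bs\caL)$; since this is isomorphic to the open subset $X_\delta\setminus\Bs\caL$ of the $\Q$-factorial terminal variety $X_\delta$, it is $\Q$-factorial terminal. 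So it remains to produce a $\Q$-factorial terminal open neighborhood in $\Gamma$ of each point of $\gamma^{-1}(\Bs\caL)\cap\Gamma_\delta$.

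Fix such a point $q$ and put $p=\pp(q)\in\P^1\setminus\delta$ (if $\Bs\caL=\emptyset$ there is nothing to prove, $\gamma$ being an isomorphism and $\Gamma_\delta\cong X_\delta$, so assume $\Bs\caL\neq\emptyset$). By $(\mathbf{TQ}_\delta)$ the member $\caL_p$ is a prime divisor and some open neighborhood of $\Bs\caL$ in $\caL_p$ is klt, smooth in codimension $2$ and $\Q$-factorial; write this neighborhood as $\caL_p\cap X^{\circ}$ for an open $X^{\circ}\subseteq X$ with $\Bs\caL\subseteq X^{\circ}$. I would then check that $\caL|_{X^{\circ}}$ is again a terminal pencil on the normal variety $X^{\circ}$ — its general members are restrictions of the terminal general members of $\caL$, and they are non-empty since they contain $\Bs\caL$ — that its base scheme is still $\Bs\caL$, and, from the definition of the graph, that the graph of its associated rational map is $\gamma^{-1}(X^{\circ})$, on which $\pp$ and $\gamma$ restrict accordingly. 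Applying Proposition~\ref{lem:neighborhood-control}(c) to $\caL|_{X^{\circ}}$ and its member $\caL_p\cap X^{\circ}$ yields an open subset $W\subseteq\gamma^{-1}(X^{\circ})$, hence open in $\Gamma$, that is $\Q$-factorial terminal and contains the proper transform of $\caL_p\cap X^{\circ}$; by Proposition~\ref{lem:graph-reso-members}(d) this proper transform is the whole fibre $\pp^{-1}(p)\cap\gamma^{-1}(X^{\circ})$, which contains $q$ because $\gamma(q)\in\Bs\caL\subseteq X^{\circ}$. Letting $p$ range over $\P^1\setminus\delta$ and adjoining the open set from the previous paragraph gives the desired cover of $\Gamma_\delta$, proving the first assertion.

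For the consequence, let $\tau\colon X'\to X$ be a thrifty $\Q$-factorial terminal resolution of $\caL$, with associated morphisms $\Gamma(\tau)\colon X'\to\Gamma$ and $\widetilde\Gamma(\tau)\colon X'\to\widetilde\Gamma$, where $\nu\colon\widetilde\Gamma\to\Gamma$ is the normalization and $\Gamma(\tau)=\nu\circ\widetilde\Gamma(\tau)$. Since $\Gamma_\delta$ is now known to be $\Q$-factorial terminal it is in particular normal, so $\nu$ restricts to an isomorphism over $\Gamma_\delta$ and $\widetilde\Gamma_\delta:=\nu^{-1}(\Gamma_\delta)$ is a $\Q$-factorial terminal open subset of $\widetilde\Gamma$. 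By Definition~\ref{def:thrifty-reso}(c) the morphism $\widetilde\Gamma(\tau)$ is a $\Q$-factorial terminalization of $\widetilde\Gamma$, hence by Lemma~\ref{lem:terminalization} it restricts to an isomorphism over $\widetilde\Gamma_\delta$; composing with $\nu$ shows that $\Gamma(\tau)$ restricts to an isomorphism over $\Gamma_\delta$. Therefore $\Exc\Gamma(\tau)$ is disjoint from $\Gamma(\tau)^{-1}(\Gamma_\delta)=(\pp\circ\Gamma(\tau))^{-1}(\P^1\setminus\delta)$, which says precisely that $\delta(\tau)=(\pp\circ\Gamma(\tau))(\Exc\Gamma(\tau))\subseteq\delta$.

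I expect the only real obstacle to be the mismatch between $(\mathbf{TQ}_\delta)$, which controls the members $\caL_p$ only in a neighborhood of $\Bs\caL$, and Proposition~\ref{lem:neighborhood-control}(c), whose hypotheses concern an entire member: coping with this is what forces the passage to the open subsets $X^{\circ}$ and the routine but necessary verification that the graph of the restricted pencil is $\gamma^{-1}(X^{\circ})$ and that the proper transform of the restricted member is the full fibre of $\pp$. The remaining points — the Zariski-locality of being $\Q$-factorial terminal over finite covers and the set-theoretic identifications of $\gamma$-preimages — are bookkeeping.
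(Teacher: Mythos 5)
Your proof is correct and follows essentially the same route as the paper: split $\Gamma_\delta$ into the locus isomorphic to $X_\delta\setminus\Bs\caL$ and neighborhoods of the fibers over $\P^1\setminus\delta$ meeting the exceptional locus, invoke Proposition~\ref{lem:neighborhood-control}(c) there, glue, and deduce the statement about the discrepancy locus from the thriftiness of the resolution together with Lemma~\ref{lem:terminalization} applied over the normalized graph. The only difference is that you make explicit two points the paper leaves implicit — the restriction to an open $X^{\circ}\supseteq\Bs\caL$ reconciling the local hypothesis $(\mathbf{TQ}_\delta)$ with the global-member hypothesis of Proposition~\ref{lem:neighborhood-control}, and the finite-subcover argument for gluing $\Q$-factoriality — both of which are sound.
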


\begin{proof}
Let $E=\gamma^{-1}(\Bs \caL)_\redd$ be the exceptional locus of $\gamma$. By assumption $\Gamma_\delta \setminus E \cong X_\delta\setminus \Bs\caL$ is $\Q$-factorial terminal. On the other hand, it follows from Proposition \ref{lem:neighborhood-control} that for every $p\in \P^1\setminus \delta$ the open set $\Gamma_\delta$ is $\Q$-factorial and terminal in a neighborhood of the intersection of $E$ with the proper transform of $\caL_p$. Since the union of such neighborhoods is an open neighborhood of $E\cap \Gamma_\delta$ in $\Gamma_\delta$, it follows that $\Gamma_\delta$ is $\Q$-factorial terminal. The second assertion follows from  Lemma \ref{lem:terminalization}.
\end{proof}

\subsection{Terminal rank one Fano pencils and relative MMPs}\label{sub:FanoRank1}

In this subsection we consider pencils of Weil divisors whose general members are terminal Fano varieties of Picard rank one and the outputs of relative MMP's ran from their resolutions. We keep the notation of subsection \ref{subsec:terminal-pencil}.

\begin{defn}[A terminal rank one Fano pencil]\label{rofp} Let $X$ be a normal projective variety of dimension at least $2$. A \emph{terminal rank one Fano pencil} on $X$ is a pencil $\caL$ whose general members are terminal Fano varieties of Picard rank one. The \emph{degeneracy locus} of $\caL$ is the finite set $\delta(\caL)\subset \P^1$ consisting of points $p$ such that the member $\caL_p$ is either reducible or has Picard rank strictly higher than one.
\end{defn}

It is known that general fibers of Mori fiber spaces can have Picard rank higher than one (see e.g.\ Example \ref{ex:dP_fibr}). But the additional assumption that the Picard rank of general fibers is one, which we impose in Definition \ref{rofp}, allows to control the effect of running relative MMP's on resolutions of such pencils more easily. Even with this restriction there is still a large natural geometric supply of pencils that can be used to construct Mori fiber completions of $\A^n$'s, see Section \ref{sec:Affine-Spaces}.

\begin{defn}[A compatible thrifty resolution]\label{def:Fano-compatible_res}
Let $\caL$ be a terminal rank one Fano pencil on a normal projective variety. A \emph{compatible thrifty resolution} of $\caL$ is a thrifty $\Q$-factorial terminal resolution of $\caL$ (see Definition \ref{def:thrifty-reso}) whose discrepancy locus is contained in the degeneracy locus of $\caL$.
\end{defn}

\smallskip
\begin{example}[Simple low-dimensional examples]\label{ex:low-dim-QT-FanoRk1}\
\begin{enumerate}[(a)]
\item A terminal rank one Fano pencil on $\P^2$ consists of lines and or conics, and the usual minimal resolution of base points of the pencil is a compatible thrifty resolution. 
\item Since $\P^2$ is the only terminal del Pezzo surface of Picard rank one, the only terminal rank one Fano pencils on $\P^3$ are the pencils of planes. Clearly, the base scheme of every such pencil is a line and its blowup gives a compatible thrifty resolution.
\end{enumerate}
\end{example}

Let $X_0$ be a $\Q$-factorial terminal projective variety and let $f_0\:X_0\to \P^1$ be a surjective morphism. Recall \cite[3.31, Example 2.16]{KollarMori-Bir_geometry} that a $K_{X_0}$-MMP $\varphi\:X_0\map X_{m}=\hat{X}$ relative to $f_0$ consists of a finite sequence $\varphi=\varphi_{m}\circ\cdots\circ\varphi_1$ of birational maps \[\xymatrix@R-0.2em@C-1em{ X_{k-1} \ar@{-->}[rr]^-{\varphi_k} & & X_k \\ & \P^1 \ar@{<-}[ul]^*{ f_{k-1}} \ar@{<-}[ur]_*{ f_k} & }\] between $\Q$-factorial terminal projective varieties, where each $\varphi_k$ is associated to an extremal ray of the closure $\NE( X_{k-1}/\P^1)$ of the relative cone of $1$-cycles of $X_{k-1}$ over $\P^1$. The morphisms $f_k\:X_k\to \P^1$ are the induced surjections. Each $\varphi_k$ is either a relative divisorial contraction or a flip whose flipping and flipped curves are contained in fibers of $ f_{k-1}$ and $f_k$. We say that a relative MMP terminates if either $K_{\hat{X}/\P^1}$ is $f_m$-nef or if $f_m$ factors through a Mori fiber space given by a contraction of a extremal ray in $\NE( X_{m}/\P^1)$.

\begin{prop}[Mori Fiber completions from pencils with compatible resolutions]\label{thm:MainThm-Completion}
Let $\caL$ be a terminal rank one Fano pencil on a normal projective variety $X$ and let $\psi_\caL\:X\map \P^1$ be the associated dominant rational map. Assume that $\caL$ has a compatible thrifty resolution $\tau\:X'\to X$. Then there exists a $K_{X'}$-MMP $\varphi\:X'\map\hat{X}$ relative to $ \psi_\caL\circ\tau\:X'\to \P^1$  which terminates. Furthermore, every terminating $K_{X'}$-MMP relative to $\psi_\caL\circ\tau$ restricts to an isomorphism over $\P^1\setminus\delta(\caL)$ and its output is a Mori fiber space over $\P^1$.
\end{prop}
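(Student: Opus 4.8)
The plan is to work with the morphism $g=\psi_\caL\circ\tau\colon X'\to\P^1$. Since $\tau$ is thrifty, $X'$ is a $\Q$-factorial terminal variety, and it is projective, being a $\Q$-factorial terminalization of the projective normalized graph $\widetilde\Gamma$; hence $g$ is projective. Let $\delta(\tau)\subseteq\delta(\caL)$ be the discrepancy locus of $\tau$. For $p\in\P^1\setminus\delta(\tau)$ the morphism $\Gamma(\tau)$ is an isomorphism over a neighbourhood of $\pp^{-1}(p)$, and by Proposition~\ref{lem:graph-reso-members}(d) this fibre is isomorphic to $\caL_p$; thus $g^*(p)\cong\caL_p$. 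Taking $p$ general, the generic fibre $X'_\eta$ of $g$ over $K=\C(\P^1)$ is the generic member of $\caL$, i.e.\ a (geometrically irreducible) terminal Fano variety over $K$ of Picard rank one; in particular $-K_{X'_\eta}$ is ample, so $-K_{X'}$ is $g$-big and $K_{X'}$ is not $g$-pseudo-effective (otherwise $K_{X'}+(-K_{X'})=0$ would be $g$-big, being a sum of a $g$-pseudo-effective and a $g$-big class, impossible since $\dim X'_\eta\ge1$). Then \cite[Corollary~1.3.3]{BCHM}, applied to the projective morphism $g$ and the $\Q$-factorial klt pair $(X',0)$, yields a $K_{X'}$-MMP over $\P^1$ that terminates with a Mori fibre space; as $(X',0)$ is terminal, all intermediate models are $\Q$-factorial terminal. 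This proves the first assertion.

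For the second assertion, let $\varphi=\varphi_m\circ\dots\circ\varphi_1\colon X'\map\hat X$ be \emph{any} terminating $K_{X'}$-MMP relative to $g$, with induced surjections $f_k\colon X_k\to\P^1$. I would prove by induction on $k$ that $\varphi_k$ restricts to an isomorphism over $\P^1\setminus\delta(\caL)$; since the generic point $\eta$ lies in $\P^1\setminus\delta(\caL)$, this also yields $(X_k)_\eta\cong X'_\eta$. The curves contracted by the extremal contraction attached to $\varphi_k$ lie in fibres of $f_{k-1}$. Suppose one such curve $C$ lies in a fibre $(X_{k-1})_p$ over a closed point $p\notin\delta(\caL)$. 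By the inductive hypothesis $\varphi_1,\dots,\varphi_{k-1}$ are isomorphisms over $p$, so $(X_{k-1})_p\cong(X')_p\cong\caL_p$ (using $\delta(\tau)\subseteq\delta(\caL)$ and Proposition~\ref{lem:graph-reso-members}(d)), an irreducible divisor of Picard rank one. Hence the image of $N_1((X_{k-1})_p)$ in $N_1(X_{k-1}/\P^1)$ is the ray $\mathbb{R}_{\ge0}[C]$, so every curve of $(X_{k-1})_p$ lies in the contracted extremal ray and the whole fibre $(X_{k-1})_p$ is mapped to a point. If $\varphi_k$ is a flip this is impossible, since the flipping contraction is small whereas $(X_{k-1})_p$ has codimension one. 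If $\varphi_k$ is a divisorial contraction with irreducible exceptional divisor $D$, then $D$ and $(X_{k-1})_p$ have the same support, so $[(X_{k-1})_p]$ is a positive multiple of $[D]$ in $\NS(X_{k-1})$; but $(X_{k-1})_p=f_{k-1}^*(p)$ is $f_{k-1}$-numerically trivial, whence $D\cdot C=0$, contradicting the negativity $D\cdot C<0$ of a divisorial contraction. Therefore no contracted curve of the MMP lies over $\P^1\setminus\delta(\caL)$, so $\varphi$ restricts to an isomorphism there; in particular $\hat X_\eta\cong X'_\eta$.

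It remains to identify the output. Since for general $p$ the fibre $\hat X_p\cong\caL_p$ is a Fano variety, $K_{\hat X/\P^1}$ is not $f_m$-nef, so the terminating MMP ends with $f_m$ factoring through a Mori fibre space $\pi'\colon\hat X\to T$ over $\P^1$; let $q\colon T\to\P^1$ be the induced morphism. Restricting to generic fibres, $\pi'$ induces a contraction $\hat X_\eta\to T_\eta$ (flat base change) from the geometrically irreducible variety $\hat X_\eta\cong X'_\eta$ of Picard rank one; by Remark~\ref{rem:trivial contractions} (whose proof is valid over any field) this contraction is an isomorphism. Thus $T_\eta$ is zero-dimensional, and being normal and geometrically connected over the characteristic-zero field $K$ it equals $\Spec K$. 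Hence $q$ is a birational morphism between normal, therefore smooth, projective curves, so $T\cong\P^1$, and $\pi'\colon\hat X\to\P^1$ is a Mori fibre space with $\Q$-factorial terminal total space, as required.

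The main obstacle is the inductive step of the second paragraph: one must rule out that some step of the relative MMP alters the fibres over $\P^1\setminus\delta(\caL)$. What makes it go through is that a fibre of $f_{k-1}$ is relatively numerically trivial, so it can neither be the numerically negative exceptional divisor of a divisorial contraction nor be covered by flipping curves; this is combined with the rank-one hypothesis on the members of $\caL$, which forces an entire non-degenerate fibre to be contracted as soon as a single curve of it is. A minor additional point, handled by passing to an algebraic closure, is that Remark~\ref{rem:trivial contractions} is applied to varieties over the non-closed field $K=\C(\P^1)$.
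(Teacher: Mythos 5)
Your proposal is correct and follows essentially the same route as the paper: non-pseudo-effectivity of $K_{X'}$ over $\P^1$ plus \cite[Corollary 1.3.3]{BCHM} for existence and termination, then induction over the steps of an arbitrary terminating relative MMP using that the fibers over $\P^1\setminus\delta(\caL)$ are irreducible of Picard rank one, and finally identification of the output as a Mori fiber space over $\P^1$; the only deviations are that you replace the paper's appeal to Remark \ref{rem:trivial contractions} in the inductive step by a direct intersection-theoretic argument (the whole fiber would be contracted, which is impossible for a small contraction and contradicts $D\cdot C<0$ for a divisorial one since the fiber class is relatively numerically trivial), and that you identify the base $T$ via the generic fiber over $K=\C(\P^1)$ rather than via general closed fibers. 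One small caveat: your assertion that the generic fiber has Picard rank one over $K$ does not follow formally from the definition of a terminal rank one Fano pencil (which concerns general closed members) and would need a specialization argument for N\'eron--Severi groups, but this is easily avoided by running your final step on general closed fibers exactly as the paper does, so it is a patch rather than a gap; likewise the sentence deducing that $T_\eta$ is zero-dimensional should be phrased as a contradiction argument (a positive-dimensional $T_\eta$ would force $\hat X_\eta\to T_\eta$ to be an isomorphism, contradicting $\dim T<\dim\hat X$).
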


\begin{proof}
Put $X'=X_0$ and $f_0=\psi_\caL\circ\tau\:X_0\to\P^1$. Since general fibers of $f_0$  are Fano varieties, their canonical divisors are not pseudo-effective, hence $K_{X_0}$ is not pseudo-effective over $\P^1$. This implies that the output $\hat{f}=f_m\:\hat{X}\to\P^1$ of any  $K_{X_0}$-MMP $\varphi\:X_0\map \hat{X}$ relative to $ f_0\: X_0\to\P^1$ that terminates, factors through a Mori fiber space $g\:\hat{X}\to T$ over a normal projective variety $T$ given by the contraction of some extremal ray of $\overline{\NE}(\hat{X}/\P^1)$. The termination of at least one such relative MMP is guaranteed by \cite[Corollary 1.3.3]{BCHM}. 

We show by induction that for every $k\in\{1,\ldots,m\}$ the birational map $\varphi_k\: X_{k-1}\map X_k$ restricts to an isomorphism over $\P^1\setminus\delta(\caL)$. Let $\sigma\: X_{k-1}\to Y$ be the birational extremal contraction associated to some extremal ray in $\overline{\NE}( X_{k-1}/\P^1)$ and let $C$ be a curve contracted by $\sigma$. Since the extremal ray lies in $\overline{\NE}( X_{k-1}/\P^1)$, $C$ is contained in some fiber $f_{k-1}^*(p)$, $p\in \P^1$. Note that by the rigidity lemma \cite[Lemma 1.6]{KollarMori-Bir_geometry}, $\sigma$ does not contract $f_{k-1}^*(p)$.
By definition of a terminal rank one Fano pencil and of a compatible thrifty resolution, for every $p\in \P^1\setminus \delta(\caL)$ the fiber $f_0^*(p)$, and hence by induction the fiber $f_{k-1}^*(p)$ endowed with its reduced structure is a projective variety of Picard rank one.  By Remark \ref{rem:trivial contractions} the restriction of $\sigma$ to $f_{k-1}^*(p)$ is an isomorphism. It follows that the exceptional locus of $\sigma$ is contained in fibers of $f_{k-1}$ over $\delta(\caL)$, hence that $\varphi_k$ restricts to an isomorphism over $\P^1\setminus\delta(\caL)$. Finally, since general fibers of $\hat{f}\:\hat{X}\to\P^1$ have Picard rank one, $\hat{f}\:\hat{X}\to\P^1$ cannot be decomposed into a Mori fiber space $g\:\hat{X}\to T$ over a base $T\to\P^1$ of positive relative dimension, hence $\hat{f}\:\hat{X}\to\P^1$ itself is a Mori fiber space. 
\end{proof}

\begin{cor}\label{cor:MFS-Completion} Let $X$ be a normal projective variety and let $\caL$ be a terminal rank one Fano pencil on $X$ that admits a compatible thrifty resolution. Then $X_{\delta(\caL)}\setminus \Bs\caL$ (see Notation \ref{nota:open-subsets}) admits a Mori fiber completion $\pi\:V\to \P^1$. 
Furthermore if $p\in \P^1\setminus \delta(\caL)$ then the scheme-theoretic fiber $\pi^*(p)$ is isomorphic to the member $\caL_p$ of $\caL$.  
\end{cor}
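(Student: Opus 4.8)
The plan is to combine Proposition \ref{thm:MainThm-Completion} with the description of the members of $\caL$ via the graph resolution, keeping careful track of which open subset of $X$ survives. First I would fix a compatible thrifty resolution $\tau\:X'\to X$ of $\caL$, which exists by hypothesis, and set $f_0=\psi_\caL\circ\tau\:X'\to\P^1$. By Proposition \ref{thm:MainThm-Completion} there is a terminating $K_{X'}$-MMP $\varphi\:X'\map\hat X$ relative to $f_0$ whose output $\pi=\hat f\:\hat X\to\P^1$ is a Mori fiber space over $\P^1$, and moreover $\varphi$ restricts to an isomorphism over $\P^1\setminus\delta(\caL)$. So on the level of open subsets, $\hat f^{-1}(\P^1\setminus\delta(\caL))\cong f_0^{-1}(\P^1\setminus\delta(\caL))$.

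Next I would identify $f_0^{-1}(\P^1\setminus\delta(\caL))$ with an open subset of $X$. Recall $\tau$ factors through the (normalized) graph, and in the notation of Corollary \ref{prop:controled-discrepancy} the discrepancy locus $\delta(\tau)$ of a compatible thrifty resolution is contained in $\delta(\caL)$; hence $\Exc\Gamma(\tau)$, and therefore the locus where $\widetilde\Gamma(\tau)\:X'\to\widetilde\Gamma$ fails to be an isomorphism, maps into $\delta(\caL)$ under $\pp$. Over $\P^1\setminus\delta(\caL)$ the composite $X'\to\widetilde\Gamma\to\Gamma$ is therefore an isomorphism, and by Proposition \ref{lem:canonical-resol}(c) the graph morphism $\gamma\:\Gamma\to X$ is an isomorphism over $X\setminus\Bs\caL$. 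Combining these identifications, $f_0^{-1}(\P^1\setminus\delta(\caL))$ is isomorphic, compatibly over $\P^1$, to $\pp^{-1}(\P^1\setminus\delta(\caL))\cap(\Gamma\setminus\gamma^{-1}(\Bs\caL))$, which $\gamma$ carries isomorphically onto $X_{\delta(\caL)}\setminus\Bs\caL$ (using Notation \ref{nota:open-subsets}, $X_{\delta(\caL)}=X\setminus\bigcup_{p\in\delta(\caL)}\caL_p$, so removing $\Bs\caL$ from it meets no fiber over $\delta(\caL)$). Thus $\hat X$ contains an open subset isomorphic to $X_{\delta(\caL)}\setminus\Bs\caL$, i.e.\ $\pi\:\hat X\to\P^1$ is a Mori fiber completion of it; set $V=\hat X$.

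For the last sentence I would argue that $\pi^*(p)\cong\caL_p$ for $p\in\P^1\setminus\delta(\caL)$. Since $\varphi$ is an isomorphism over $\P^1\setminus\delta(\caL)$ we have $\pi^*(p)\cong f_0^*(p)$; since $X'\to\Gamma$ is an isomorphism over that locus, $f_0^*(p)\cong\pp^*(p)$; and by Proposition \ref{lem:graph-reso-members}(d) the morphism $\gamma_p\:\pp^*(p)\to\caL_p$ is an isomorphism. Chaining these gives $\pi^*(p)\cong\caL_p$ as schemes.

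The main obstacle is the bookkeeping of the factorization $X'\to\widetilde\Gamma\to\Gamma$ and the verification that ``discrepancy locus contained in $\delta(\caL)$'' really forces $X'\to\Gamma$ to be an isomorphism — not merely an isomorphism in codimension one — over $\P^1\setminus\delta(\caL)$. This needs that a thrifty resolution over a locus with empty discrepancy locus contracts nothing there and, combined with $\widetilde\Gamma\to\Gamma$ being small, gives a genuine isomorphism onto the open set $\Gamma\setminus\gamma^{-1}(\Bs\caL)$; once that is in hand the rest is a routine diagram chase through the identifications above.
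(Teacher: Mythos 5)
Your proposal is correct and follows essentially the same route as the paper: run a terminating relative MMP from the compatible thrifty resolution via Proposition \ref{thm:MainThm-Completion}, observe that $\gamma$ is an isomorphism over $X\setminus\Bs\caL$, that $\Gamma(\tau)$ is an isomorphism over $\Gamma_{\delta(\caL)}$ because the discrepancy locus lies in $\delta(\caL)$, and that $\varphi$ is an isomorphism over $\P^1\setminus\delta(\caL)$, then conclude the fiber statement from Proposition \ref{lem:graph-reso-members}(d). The detour through the normalized graph and the worry about ``isomorphism in codimension one'' are unnecessary: since the discrepancy locus is defined via $\Exc\Gamma(\tau)$, i.e.\ the indeterminacy locus of $\Gamma(\tau)^{-1}$, its avoidance of $\P^1\setminus\delta(\caL)$ already makes the proper birational morphism $\Gamma(\tau)$ a genuine isomorphism over $\Gamma_{\delta(\caL)}$, which is exactly how the paper argues.
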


\begin{proof} Let $\tau\:X'\to X$ be a compatible thrifty resolution of $\caL$. Put $\tau'=\Gamma(\tau)$ and let  $\varphi\:X'\map V$ be a  $K_{X'}$-MMP relative to $\pp\circ\ \tau'\:X'\to \P^1$ which terminates. By Proposition \ref{thm:MainThm-Completion}, $V$ has a structure of a Mori fiber space $\pi\:V\to \P^1$. The desired open embedding is given by the restriction to $X_{\delta(\caL)}\setminus \Bs \caL$ of the birational map $\varphi\circ (\gamma\circ \tau')^{-1}\:X\map V$. Indeed, the birational map $\gamma^{-1}\:X\map \Gamma$ induces an isomorphism between $X_{\delta(\caL)}\setminus \Bs \caL$ and $\Gamma_{\delta(\caL)}\setminus E$. On the other hand, since by the definition of a compatible thrifty resolution the image of $\Exc(\tau')$ by $\pp\circ\tau'\:X'\to \Gamma\to \P^1$  is contained in $\delta(\caL)$, the birational map $(\tau')^{-1}$ restricts to an isomorphism over $\pp^{-1}(\P^1\setminus \delta(\caL))=\Gamma_{\delta(\caL)}$. It follows in turn from Proposition \ref{thm:MainThm-Completion} that the rational map $\varphi\circ (\tau')^{-1}\:\Gamma\map V$ restricts to an isomorphism over $\Gamma_{\delta(\caL)}$. The second assertion follows from Proposition \ref{lem:graph-reso-members}. 
\end{proof}

The next example illustrates the process of taking a compatible thrifty resolution of a terminal rank one Fano pencil and then running a relative MMP as above. It shows in particular that different runs of the MMP may lead to different Mori fiber completions. 

\begin{example}[Mori fiber completions of $\A^2$ from pencils]\label{ex:conic-pencil}
Let $\caL$ be a pencil on $\P^2=\Proj(\C[x,y,z])$ generated by a smooth conic $C$ and twice a line $H$ tangent to $C$. Up to a projective equivalence we may assume that $C=\{xz-y^2=0\}$ and $H=\{x=0\}$. The graph of $\psi_\caL$ is the surface $\Gamma \subseteq \P^2\times \P^1_{[u:v]}$ defined by the bi-homogeneous equation $(xz-y^2)v-x^2u=0$. It is normal and its unique singular point $p=([0:0:1],[1:0])$ is supported at the intersection of the proper transform of $H$ with the exceptional divisor $E\cong \P^1$ of $\gamma=\pr_1\:\Gamma \to \P^2$. The singular point is a cyclic quotient singularity of type $A_3$. Let $\tau'\:X'\to \Gamma$ be the minimal resolution of the singularity of $\Gamma$. Then $\tau=\gamma\circ \tau'$ is a compatible thrifty resolution of $\caL$. The exceptional locus of $\tau'$ is a chain of three smooth rational curves $F_0$, $F$, $F_1$ with self-intersection numbers equal to $-2$, having $F$ as its middle component. The proper transforms $E'$ and $H'$ of $E$ and $H$ in $X'$ are smooth rational curves with self-intersection $-1$, and they intersect $\Exc(\tau')$ transversally along the curves $F_0$ and $F$ respectively.

The $K_{X'}$-MMP relative to $f'=\pp\circ\ \tau'\:X'\to \P^1$ first contracts $H'$, then the image of $F$ and finally either the image of $F_0$ or the image of $F_1$. The resulting Mori fiber space $\pi\:V\to \P^1$ is thus isomorphic to $\rho_0=\pr_2\:\F_0=\P^1\times \P^1 \to \P^1$ in the first case and to the Hirzebruch surface $\rho_1\:\F_1\to \P^1$ in the second case. The first case yields an open embedding of $\A^2=\P^2\setminus H$ into $\F_0$ as the complement of the proper transforms of $E$ and $F_1$, which are respectively a section with self-intersection number $0$ and a fiber of $\rho_0$. In the second case we obtain an open embedding of $\A^2$ into $\F_1$ as the complement of the proper transforms of $E$ and $F_0$, which are respectively the negative section and a fiber of $\rho_1$.
\end{example}

\smallskip \section{Obtaining Mori fiber completions from special pencils}\label{sec:H-special}

We now consider a class of pencils to which the methods of Section \ref{sec:pencils_and_resolutions} apply. A \emph{polarized ($\Q$-factorial) pair} $(X,H)$ is by definition a pair consisting of a normal projective ($\Q$-factorial) variety $X$ of dimension at least $2$ and an ample prime Weil divisor $H$ on $X$. 

\begin{defn}[$H$-special pencils]\label{def:H-special_pencil} Let $(X,H)$ be a polarized $\Q$-factorial pair. An \emph{$H$-special} pencil on $X$ is a pencil $\caL$  which satisfies the following properties:
\begin{enumerate}[(a)]
\item $dH$ is a member of $\caL$ for some integer $d\geq 1$.
\item The base locus $\Bs \caL$ is irreducible.  
\item If $d=1$ then the base scheme $\Bs \caL$ is smooth or its support is contained in $\Sing(H)$.
\end{enumerate}
\end{defn}

\subsection{Integrity of members} 

\begin{lem}\label{lem:pencil-local-smooth}
Let $(R,\fm)$ be a noetherian integral local ring and let $f\in \fm$ be a nonzero element such that the ring $R/(f)$ is regular. Then $R$ is regular and for every $h\in \fm^2$ the ring $R/(f+h)$ is regular. 
\end{lem}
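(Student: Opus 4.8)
The statement is local, so I may and will assume $R = (R,\fm)$ is a noetherian local integral domain with $f \in \fm \setminus \{0\}$ and $R/(f)$ regular. The first claim, that $R$ is regular, is standard: since $R$ is a domain, $f$ is a nonzerodivisor, so the short exact sequence $0 \to (f) \to R \to R/(f) \to 0$ together with the regularity of $R/(f)$ shows, via the dimension formula $\dim R = \dim R/(f) + 1$ (Krull's principal ideal theorem) and the fact that $\dim_{R/\fm}(\fm/\fm^2) \le \dim_{R/\fm}(\bar{\fm}/\bar{\fm}^2) + 1$ where $\bar{\fm}$ is the maximal ideal of $R/(f)$, that $\dim_{R/\fm}(\fm/\fm^2) \le \dim R$, hence equality, i.e.\ $R$ is regular. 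Concretely, if $\bar{x}_1,\ldots,\bar{x}_n$ is a regular system of parameters of $R/(f)$, then lifts $x_1,\ldots,x_n$ together with $f$ generate $\fm$ and $n+1 = \dim R/(f) + 1 = \dim R$, so $(f,x_1,\ldots,x_n)$ is a regular system of parameters of $R$.

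For the second, more substantive claim I fix $h \in \fm^2$ and set $g = f + h$; I want $R/(g)$ regular. The key observation is that $g = f + h \equiv f \pmod{\fm^2}$, so $g$ and $f$ have the same (nonzero) image in $\fm/\fm^2$. Thus $g$ is part of a regular system of parameters of $R$: extend the image of $g$ in $\fm/\fm^2$ to a basis, lift to elements $g, y_2,\ldots,y_{n}$ of $\fm$ (with $n = \dim R$); then $(g, y_2, \ldots, y_n)$ is a regular system of parameters of the regular local ring $R$. It is a standard fact that in a regular local ring the quotient by any part of a regular system of parameters is again regular (of the complementary dimension): indeed $R/(g)$ then has maximal ideal generated by the $n-1$ images of the $y_i$, and $\dim R/(g) = \dim R - 1 = n-1$ since $g \ne 0$ is a nonzerodivisor in the domain $R$, so $R/(g)$ is regular.

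The main point to get right — and the only place any real care is needed — is the claim that $g = f+h$ is genuinely part of a regular system of parameters, i.e.\ that its image in $\fm/\fm^2$ is nonzero. This is immediate here because $h \in \fm^2$ forces $g \equiv f \not\equiv 0$ in $\fm/\fm^2$; the hypothesis $R/(f)$ regular was used precisely to guarantee $f \notin \fm^2$ (equivalently, $R$ regular with $f$ a parameter). I would state this as a one-line lemma-within-the-proof: in a regular local ring $(R,\fm)$, an element $f \in \fm$ has $f \notin \fm^2$ if and only if $R/(f)$ is regular, and in that case $R/(f')$ is regular for every $f'$ with $f' - f \in \fm^2$. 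I do not anticipate any real obstacle; the argument is entirely within the elementary theory of regular local rings (Matsumura, or \cite[Theorem 14.2]{Matsumura} and its corollaries on quotients by regular parameters), and no MMP input is needed.
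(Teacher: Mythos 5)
Your proof is correct and takes essentially the same route as the paper: both establish regularity of $R$ by lifting a regular system of parameters of $R/(f)$ and appending $f$, and both then show that $f+h$ can replace $f$ in a regular system of parameters. The only cosmetic difference is that you argue via the nonvanishing of the image of $f+h$ in $\fm/\fm^2$, while the paper re-derives the same conclusion by the explicit manipulation $(f+h,a_1,\ldots,a_n)=(f(1+vf),a_1,\ldots,a_n)=\fm$.
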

\begin{proof} 
Let $\pi\: R\to R/(f)$ be the quotient morphism. Since $R/(f)$ is regular, $f\notin \fm^2$ and the maximal ideal $\pi(\fm)$ is generated by a regular sequence $\pi(a_1),\ldots,\pi(a_n)$, where $a_i\in \fm$ and $n=\dim R/(f)$. It follows that $\fm$ is generated by the regular sequence $f,a_1,\ldots, a_n$, hence that $R$ is regular.   Furthermore,  $\fm^2\subseteq (f^2)+(a_1,\ldots,a_n)$, so for some $v\in R$ we have $h-vf^2\in (a_1,\ldots,a_n)$. Since elements in $1+\fm$ are invertible in $R$, we get $(f+h,a_1,\ldots,a_n)=(f(1+vf),a_1,\ldots,a_n)=\fm$. It follows that the images of $a_1,\ldots, a_n$ under the quotient homomorphism $R\to R/(f+h)$ form a regular sequence which and they generate the maximal ideal of $R/(f+h)$. Thus, $R/(f+h)$ is regular.
\end{proof}

\begin{lem}[Integrity and smoothness of members]\label{lem:integral-members}
Let $\caL$ be an $H$-special pencil on a polarized $\Q$-factorial pair $(X,H)$. Assume that some member of $\caL$ is smooth and Cartier at some point $x\in \Bs \caL$ and that in case $H$ is a member of $\caL$, $x \in \Sing H$. Then every member of $\caL$ not supported on $\Supp H$ is a prime divisor which is smooth and Cartier at $x$. 
\end{lem}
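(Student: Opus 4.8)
The plan is to work locally at the point $x$ and reduce the statement to the purely local assertion of Lemma \ref{lem:pencil-local-smooth}. First I would set up notation: let $\caL\subseteq H^0(X,\cF)$ with $dH$ among its members, write $\caL=\langle s_0,s_1\rangle$ where $s_0$ is a section whose associated Weil divisor is $dH$ and $s_1$ is a section whose associated member $Y=\caL_{[0:1]}$ is, by hypothesis, smooth and Cartier at $x$. Every other member not supported on $\Supp H$ is then $\caL_{[\mu:\lambda]}$ for $(\mu:\lambda)$ with $\lambda\neq 0$, associated to the section $\mu s_0+\lambda s_1$, and after rescaling it suffices to treat the sections $s_1+\mu s_0$, $\mu\in\C$.

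Next I would pass to the local ring $R=\cO_{X,x}$, which is a noetherian integral local ring since $X$ is a normal variety. Because $Y$ is Cartier at $x$, the divisorial sheaf $\cF$ is invertible near $x$, so after trivializing $\cF$ locally the sections $s_0,s_1$ become elements $f_0,f_1\in\fm\subseteq R$ (here $\fm$ is the maximal ideal of $R$), with $V(f_1)=Y$ near $x$ and $V(f_0)=dH$ near $x$. The smoothness of $Y$ at $x$ says exactly that $R/(f_1)$ is regular. Now I invoke the key point that $f_0\in\fm^2$: when $d\geq 2$ this is clear because $V(f_0)=dH$ is non-reduced, so $f_0$ vanishes to order $\geq 2$ along $H$ through $x$; when $d=1$ it is exactly where the hypothesis $x\in\Sing H$ (forced by the $H$-special condition (c) combined with the statement's assumption) is used — $x$ being a singular point of the hypersurface $H=V(f_0)$ means $f_0\in\fm^2$. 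With $f_1\in\fm$ nonzero, $R/(f_1)$ regular, and $\mu f_0\in\fm^2$, Lemma \ref{lem:pencil-local-smooth} applied with $f=f_1$ and $h=\mu f_0$ gives that $R$ is regular and $R/(f_1+\mu f_0)$ is regular for every $\mu\in\C$. Thus each such member is smooth at $x$; it is Cartier at $x$ because $\cF$ is invertible near $x$ and the member is a Cartier divisor there (being cut out by a single local equation $f_1+\mu f_0$).

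It remains to deduce that each such member $\caL_p$ (with $p\notin$ the single point giving $dH$) is a \emph{prime} divisor, i.e. irreducible and reduced. Reducedness: $\caL_p$ is smooth, hence reduced, at the point $x$, so it is generically reduced along the irreducible component through $x$; but I should also argue $\caL_p$ is reduced everywhere, or rather that it is irreducible as a scheme. Here I would use that $\Bs\caL$ is irreducible (condition (b)) and is contained in every member: the base scheme $\Bs\caL$ meets $\caL_p$ in all of $\Bs\caL$, so every irreducible component of $\caL_p$ must contain the irreducible set $\Bs\caL$. Combined with the fact that $\caL_p$ is smooth — in particular normal, hence a disjoint union of its irreducible components — at the point $x\in\Bs\caL$, all components of $\caL_p$ pass through the smooth point $x$, which forces $\caL_p$ to be irreducible; and being smooth at $x$ it is then reduced (generically reduced and irreducible implies reduced). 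So $\caL_p$ is a prime divisor, smooth and Cartier at $x$.

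The main obstacle I anticipate is the global irreducibility/reducedness step rather than the local smoothness computation: ensuring that \emph{every} component of $\caL_p$ really contains $\Bs\caL$ requires care about linear equivalence of the Weil divisors $\caL_p$ and the scheme structure of $\Bs\caL$ as the intersection of two members, and one must rule out embedded components or extra components disjoint from $\Bs\caL$. The argument above leans on condition (b) ("$\Bs\caL$ irreducible") together with the observation, from Section \ref{sec:Weil-Pencils}, that $\Bs\caL$ is contained set-theoretically in each member; I would make this precise by noting that $\caL_p$ and $dH$ have no common component (for $p$ not the point giving $dH$, since then $\caL_p$ is not supported on $\Supp H$), so $\Bs\caL$ equals the intersection of these two members and hence lies in $\caL_p$. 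The local regularity of $\caL_p$ at $x\in\Bs\caL$ then does the rest. The $d=1$ versus $d\geq 2$ dichotomy in establishing $f_0\in\fm^2$ is the only place the hypotheses split, and it is routine once the $H$-special condition (c) is invoked.
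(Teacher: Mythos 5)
Your local step is essentially the paper's: trivialize $\cF$ near $x$ (possible since one member is Cartier there), reduce to $R=\cO_{X,x}$, and apply Lemma \ref{lem:pencil-local-smooth} to $f_1$ and $\mu f_0$. The genuine gap is in the primeness step. From the (trivial) containment $\Bs\caL\subseteq\caL_p$ you conclude that ``every irreducible component of $\caL_p$ must contain the irreducible set $\Bs\caL$''; this is a non sequitur: containment only places $\Bs\caL$ inside the union of the components, and a priori $\caL_p$ could equal $D_1+D_2$ with $\Bs\caL\subseteq D_1$ while $D_2$ is disjoint from $\Bs\caL$, or meets it away from $x$. Your proposed remedy (that $\caL_p$ and $dH$ share no component, so $\Bs\caL$ lies in $\caL_p$) only re-proves this containment and does not exclude such extra components --- precisely the obstacle you flagged. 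The paper closes the gap using the global hypotheses that your argument never invokes: $X$ is projective, $H$ is ample and $X$ is $\Q$-factorial. Writing a member $Y\neq dH$ as $Y=\sum_{i=1}^{r}D_i$ with $D_i$ prime, each $D_i$ is $\Q$-Cartier, so $(D_i\cap H)_\redd$ is nonempty (ampleness) and of pure codimension $1$ in $H$; being contained in the irreducible set $S=(\Bs\caL)_\redd$, which also has pure codimension $1$ in $H$, it must equal $S$, so \emph{every} component passes through $x$. The paper then counts multiplicities at $x$: the ideal of $Y$ in $\cO_{X,x}$ lies in $\fm^{r}$, while it is generated by $f+th\in\fm\setminus\fm^2$, forcing $r=1$ (your alternative, ``smooth at $x$ hence a unique reduced component through $x$'', works equally well once all components are known to pass through $x$). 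Without an argument of this kind the irreducibility and reducedness of $\caL_p$ remain unproven.

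A minor point on the local step: deducing $f_0\in\fm^2$ for $d\geq 2$ from ``$dH$ is non-reduced'' implicitly uses that $\cO_{X,x}$ is regular (so that $H$ is principal at $x$ and its $d$-fold multiple has local equation in $\fm^d$); in a non-regular local ring a function vanishing to order $2$ along a Weil divisor need not lie in $\fm^2$ (on the quadric cone of Example \ref{ex:quadric_cone} one has $\div(y)=2D$ with $y\notin\fm^2$ at the vertex). This is easily repaired by the ordering the paper uses: first apply Lemma \ref{lem:pencil-local-smooth} to the smooth member alone to get that $\cO_{X,x}$ is regular, and only then conclude $f_0\in\fm^2$ (for $d=1$ this is where $x\in\Sing H$ and the Jacobian criterion enter, as you say).
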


\begin{proof}
By assumption $\caL$ has a member $dH$ for some positive integer $d$ and a member $F\neq dH$ which is smooth and Cartier at $x\in S=(\Bs \caL)_\redd=(F\cap H)_\redd $. Any two members of $\caL$ differ by a principal divisor, so we infer that all members of $\caL$ are Cartier at $x$. Let $\fm$ be the maximal ideal of the local ring $\cO_{X,x}$ and let $f,h\in \fm$ be generators of the ideals of $F$ and $dH$ in $\cO_{X,x}$, respectively. Since $F$ is smooth at $x$, by Lemma \ref{lem:pencil-local-smooth}, $f\in\fm\setminus \fm^2$ and $\cO_{X,x}$ is regular. If $d>1$ then $h\in \fm^2$. If $d=1$ then by assumption $x$ is a singular point of $H$, so by the Jacobian criterion in $\cO_{X,x}$ the residue class of $h$ in $\fm/\fm^2$ is trivial, hence again $h\in \fm^2$.

The variety $X$ is projective and the divisor $H$ is ample, so since $F$ is $\Q$-Cartier, $S$ is a closed subset of $\Supp(H)$ of pure codimension $1$. Let $Y$ be any member of $\caL$ other than $dH$. Write $Y=\sum_{i=1}^{r}D_{i}$, where $D_i$ are prime divisors. Each $D_i$ is $\Q$-Cartier and $H$ is ample, so $(D_i\cap H)_\redd$ has pure codimension $1$ in $H$. But $(D_i\cap H)_\redd\subseteq (Y\cap H)_\redd=S$ and $S$ is irreducible, so $(D_i\cap H)_\redd=S$ for each $i$. The ideal of $D_i$ in $\cO_{X,x}$ is thus contained in $\fm$ for every $1\leq i\leq r$, and hence the ideal of $Y$ is contained in $\fm^r$. On the other hand, the ideal of $Y$ in $\cO_{X,x}$ is generated by $f+th$ for some $t\in \C$. Since $f\in\fm \setminus \fm^2$ and $h\in \fm^2$, we have $f+th\in \fm\setminus \fm^2$. Thus $r=1$ and so $Y$ is a prime divisor, which by Lemma \ref{lem:pencil-local-smooth} is smooth at $x$. 
\end{proof}

In Lemma \ref{lem:integral-members} the assumption that the base locus of $\caL$ is irreducible and that $\caL$ has a member which is smooth at some point of $\Bs\caL$ are both necessary to ensure primeness of all members of $\caL$ other than $dH$. Similarly for the additional assumption in case $d=1$, where $H$ is itself a member of $\caL$, that $\Supp(\Bs\caL)$ contains a singular point of $H$. This is illustrated by the following examples.

\begin{example}[Failure of integrity] \label{ex:irreducible}\
\begin{enumerate}[(a)] 
\item Let $\caL$ be a pencil on $\P^2$ generated by a smooth conic $C$ and twice a line $H$ meeting $C$ at two distinct points. The sum of two lines tangent to $C$ at those points is a reducible member of $\caL$. Similarly, the pencil in $\P^3$ generated by the projective cones over $C$ and $2H$ has a reducible member consisting of the union of two planes. In the latter case the base locus is reducible but connected.
\item Let $\caL$ be a pencil on $\P^2$ generated by a nodal cubic curve $C$ and $3H$, where $H$ is the line tangent to one of the two branches of $C$ at its singular point $p=(\Bs\caL)_\redd$. Then every member of the pencil is singular at $p$ and $\caL$ has a reducible member. Indeed, up to a projective equivalence we have $C=\{y^2z=x^2(x-z)\}$, $H=\{x=0\}$ and $p=[0:0:1]$. Then the members of $\caL$ are $\caL_{[a:b]}=\{a(x^2+y^2)z=bx^3\}$ and $\caL_{[1:0]}$ is the union of three distinct lines through $p$.
\item Let $\caL$ be a pencil on $\P^2$ generated a smooth conic $H$ and some other smooth conic meeting $H$ in one point $p$ only. The pencil has a non-reduced member supported on the line tangent to both conics at $p$.
\end{enumerate}
\end{example}

\begin{cor}\label{cor:H-special-members}
Let $\caL$ be an $H$-special pencil on a polarized $\Q$-factorial pair $(X,H)$. In case $H$ is a member of $\caL$ and $\Supp (\Bs \caL)\nsubseteq \Sing H$ assume additionally that $\Cl(X)=\Z\langle H\rangle$. If $\caL$ has a member which is smooth and Cartier on some neighborhood of $\Bs \caL$ then every member of $\caL$ other than $dH$ is a prime divisor which is smooth and Cartier on some neighborhood of $\Bs \caL$. \end{cor}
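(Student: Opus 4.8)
The plan is to derive the statement from Lemma~\ref{lem:integral-members} applied at each point of $\Bs\caL$, once one isolates the single configuration not reached by Lemma~\ref{lem:integral-members} and disposes of it by a direct computation in local rings.

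I would first record two elementary facts. Fix a member $F$ of $\caL$ that is smooth and Cartier on an open neighborhood $W$ of $\Bs\caL$, as provided by the hypothesis; in particular $\Bs\caL\subseteq W$. Since any two members of $\caL$ differ by a principal divisor and $F|_W$ is Cartier, every member of $\caL$ is Cartier on $W$. Next, all members of $\caL$ are linearly equivalent, so a member whose support is contained in $\Supp H$ is of the form $mH$ with $mH\sim dH$, whence $m=d$ by ampleness of $H$; consequently every member $Y\neq dH$ has support not contained in $\Supp H$, and if $d\geq2$ then $H$ is not a member of $\caL$.

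Next I would distinguish cases according to whether $H$ is a member of $\caL$. If $d\geq2$, then $H$ is not a member, so for each $x\in\Bs\caL$ the hypotheses of Lemma~\ref{lem:integral-members} hold, with $F$ as the member that is smooth and Cartier at $x$; the same is true if $d=1$ and $\Supp(\Bs\caL)\subseteq\Sing H$, since then every $x\in\Bs\caL$ lies in $\Sing H$. In either situation Lemma~\ref{lem:integral-members} shows that each member $Y\neq dH$ is a prime divisor which is smooth and Cartier at $x$; letting $x$ range over $\Bs\caL$ and using that for a fixed prime divisor $Y$ the loci where $Y$ is smooth and where $Y$ is Cartier are open in $X$, we conclude that every member $Y\neq dH$ is a prime divisor, smooth and Cartier on a neighborhood of $\Bs\caL$.

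The remaining case, $d=1$ with $\Supp(\Bs\caL)\not\subseteq\Sing H$, is the one I expect to be the main obstacle, since Lemma~\ref{lem:integral-members} is unavailable at the points of $\Bs\caL$ lying off $\Sing H$. By Definition~\ref{def:H-special_pencil}(c) this configuration forces $\Bs\caL$ to be smooth, and by hypothesis $\Cl(X)=\Z\langle H\rangle$. Since $H$ is ample, the class of every nonzero effective divisor is a positive multiple of $[H]$, so writing a member $Y$ as $\sum m_iD_i$ with the $D_i$ prime and using $[Y]=[H]$ forces $Y$ to be a single prime divisor; thus every member of $\caL$ is prime ($H$ being prime by definition of a polarized pair), and every member is Cartier on $W$ by the first step. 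It remains to prove smoothness of the members near $\Bs\caL$. Fix $x\in\Bs\caL$ and a local equation $f\in\cO_{X,x}$ of $F$; smoothness of $F$ at $x$ together with Lemma~\ref{lem:pencil-local-smooth} makes $\cO_{X,x}$ regular with $f\notin\fm^2$. Choose any member $G\neq F$ and a local equation $g\in\cO_{X,x}$ of $G$. As $\Bs\caL$ is the scheme-theoretic intersection of $F$ and $G$, its ideal in $\cO_{X,x}$ equals $(f,g)$, so smoothness of $\Bs\caL$ at $x$ forces the image $\bar g$ of $g$ in the regular local domain $\cO_{X,x}/(f)\cong\cO_{F,x}$ to be nonzero — otherwise the distinct prime divisors $F$ and $G$ would satisfy $F\subseteq G$ near $x$ — and to lie outside the square of its maximal ideal; hence $g\notin\fm^2+(f)$. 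Therefore the images of $f$ and $g$ in $\fm/\fm^2$ are linearly independent, and every nonzero combination $\lambda f+\mu g$ lies in $\fm\setminus\fm^2$. Since the sections spanning $\caL$ can be taken to be those of $F$ and $G$, the local equation at $x$ of any member of $\caL$ is such a combination; thus it is part of a regular system of parameters of $\cO_{X,x}$, so the member is smooth at $x$. Letting $x$ run over $\Bs\caL$, and invoking once more the openness of the smooth and Cartier loci, finishes the proof.
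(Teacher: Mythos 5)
Your proof is correct and follows essentially the same route as the paper's: the cases $d\geq 2$ and $d=1$ with $\Supp(\Bs\caL)\subseteq\Sing H$ are delegated to Lemma \ref{lem:integral-members}, and in the remaining case primeness is extracted from $\Cl(X)=\Z\langle H\rangle$ while smoothness is a local-ring computation at points of the smooth base scheme. The only (harmless) difference is in that last computation: the paper applies Lemma \ref{lem:pencil-local-smooth} inside $\cO_{Y,x}$ with the local equation of $H$, whereas you work in $\cO_{X,x}$, use the distinguished smooth member to see that the local equations of two members are independent modulo $\fm^2$, and conclude that every member's equation is part of a regular system of parameters --- a mild variant that in fact also gives smoothness of $H$ itself along $\Bs\caL$.
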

\begin{proof}
By assumption $\caL$ has $dH$ as a member for some positive integer $d$. Put $S=(\Bs \caL)_\redd$. We may assume that $d=1$ and $S\nsubseteq \Sing H$, as otherwise the corollary follows from Lemma \ref{lem:integral-members}. Then every member of $\caL$ is prime, because by assumption $\Cl(X)=\Z\langle H\rangle$ in this case. Moreover, by the definition of an $H$-special pencil (see Definition \ref{def:H-special_pencil}(c)), the base scheme $\Bs \caL$ is smooth, hence in particular reduced. Then for every  member $Y\neq H$ of $\caL$ we have $S=\Bs \caL=Y\cap H$ scheme-theoretically. Let $\fm$ be the maximal ideal of the local ring $\cO_{X,x}$ at a point $x\in S$. Then the ring $\cO_{S,x}$ is regular and isomorphic to $\cO_{X,x}/(y,h)= (\cO_{X,x}/(y))/(h)$ where $y$ and $h$ are generators of the ideals of $Y$ and $H$ in $\cO_{X,x}$, respectively. Since $Y\sim H$ is prime, the ring $\cO_{X,x}/(y)$ is integral and its quotient by $(h)$ is regular. So $\cO_{X,x}/(y)$ is regular by Lemma \ref{lem:pencil-local-smooth}, that is, $Y$ is smooth at $x$. 
\end{proof}

\subsection{$H$-special pencils of Cartier divisors}

Recall (Definition \ref{def:Fano}) that for a Fano variety $X$ for which $\Cl(X)\cong \Z\langle H\rangle $, where $H$ is an ample divisor, the \emph{index of $X$} is the unique integer $i_X$ for which  $-K_X\sim i_XH$.

\begin{lem} \label{lem:fat-member-take0}
Let $X$ be a Fano variety with the class group $\Cl(X)\cong \Z\langle H\rangle $ and let $Y\sim dH$ be a normal prime divisor on $X$. If $Y$ is Cartier in codimension $2$ and $d<i_X$ then $Y$ is a Fano variety. 
\end{lem}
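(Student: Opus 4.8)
The plan is to deduce everything from the inversion-of-adjunction formula of Lemma~\ref{lem:different}. First I would record the two cheap properties: $Y$, being a closed subvariety of the projective variety $X$, is itself projective, and it is normal by hypothesis. So the whole content is to check that $-K_Y$ is an ample $\Q$-Cartier divisor.

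The first genuine step is to verify that $(X,Y)$ is a log pair, i.e.\ that $K_X+Y$ is $\Q$-Cartier. Since $X$ is Fano, $-K_X$ is ample, in particular $\Q$-Cartier; because $\Cl(X)=\Z\langle H\rangle$ and $-K_X\sim i_X H$, a suitable positive multiple of $-K_X$ is Cartier, hence the same multiple of $H$ is Cartier, so $H$ is $\Q$-Cartier. Consequently $K_X+Y\sim (d-i_X)H$ is $\Q$-Cartier, and as $Y$ is a reduced prime divisor the pair $(X,Y)$ is a log pair in the sense of Section~\ref{ssec:singularities}.

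Next, because $Y$ is a normal prime Weil divisor which is Cartier in codimension $2$, Lemma~\ref{lem:different} applies and gives the adjunction formula $K_Y\sim_\Q (K_X+Y)|_Y$. Restricting the linear equivalence $K_X+Y\sim (d-i_X)H$ to $Y$ (legitimate since both sides are $\Q$-Cartier) yields $-K_Y\sim_\Q (i_X-d)\,H|_Y$. By assumption $d<i_X$, so $i_X-d$ is a positive integer; since $H$ is ample on $X$, the $\Q$-Cartier divisor $(i_X-d)H$ is ample, and the restriction to the closed subvariety $Y$ of an ample $\Q$-Cartier divisor is again ample and $\Q$-Cartier. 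Hence $-K_Y$ is ample and $Y$ is a Fano variety.

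There is no deep obstacle here: the only point that needs care is checking that the hypotheses of Lemma~\ref{lem:different} are met, namely that $K_X+Y$ is $\Q$-Cartier so that $(X,Y)$ really is a log pair — this is where the cyclic structure of $\Cl(X)$ is used — while the ``Cartier in codimension~$2$'' hypothesis on $Y$ is exactly what makes the adjunction formula available, and the strict inequality $d<i_X$ is precisely what turns the restricted class $-K_Y$ into a positive multiple of the ample class $H|_Y$.
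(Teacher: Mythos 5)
Your proof is correct and follows essentially the same route as the paper's: establish that $K_X+Y\sim(d-i_X)H$ is $\Q$-Cartier (the paper phrases this as $X$ being $\Q$-factorial, which follows from $\Cl(X)=\Z\langle H\rangle$ together with $-K_X$ being $\Q$-Cartier, exactly as you argue), apply the adjunction statement of Lemma~\ref{lem:different} using normality and Cartier-in-codimension-$2$, and conclude ampleness of $-K_Y=(i_X-d)H|_Y$ from $d<i_X$. Your only deviation is spelling out why $H$ is $\Q$-Cartier rather than invoking $\Q$-factoriality directly, which is a harmless (indeed slightly more explicit) version of the same step.
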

\begin{proof}
Since $\Cl(X)\cong \Z$, $X$ is $\Q$-factorial, so $K_X+Y$ is $\Q$-Cartier. Since $Y$ is normal and Cartier in codimension $2$, the adjunction formula, Lemma \ref{lem:different}, implies that $-K_Y=-(K_X+Y)|_Y$ is an anticanonical divisor on $Y$. We have $i_X>d$, so the divisor $-(K_X+Y)\sim (i_X-d)H$ is ample, hence $-K_Y$ is ample. 
\end{proof}

It is more difficult to provide uniform conditions which ensure that a given member of an $H$-special pencil has Picard rank one. For pencils of Cartier divisors on mildly singular varieties we can rely on the following result  for Picard groups proven in \cite[Expos\'e XII, Corollary 3.6]{SGA2}.

\begin{lem}[Grothendieck-Lefschetz theorem] \label{lem:Grothencieck-Lefschetz} 
Let $Y$ be an ample effective Cartier divisor on a normal variety $X$. Assume that $H^i(Y,\cO_X(-\ell Y)|_Y)=0$ for $i=1,2$ and every $\ell>0$, and that $X\setminus Y$ is a local complete intersection. Then the restriction homomorphism $\Pic X\to \Pic Y$ is an isomorphism.
\end{lem}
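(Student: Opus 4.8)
The plan is to deduce this version of the Grothendieck–Lefschetz theorem from the classical statement in \cite[Expos\'e XII]{SGA2} by passing to a suitable formal/local setup, or more directly, by invoking the comparison between the Picard group of $X$ and that of the formal completion $\hat{X}$ of $X$ along $Y$. Recall that the classical Grothendieck–Lefschetz theorem gives two ingredients: first, a Lefschetz-type comparison $\operatorname{Pic}\hat{X}\to\operatorname{Pic} Y$ is an isomorphism provided the cohomological conditions $H^i(Y,\cO_X(-\ell Y)|_Y)=0$ for $i=1,2$ and all $\ell>0$ hold (this is precisely the content of the $\mathrm{Lef}$ and $\mathrm{Leff}$ conditions, via the exact sequences relating the infinitesimal neighborhoods $Y_\ell$ of $Y$ whose successive graded pieces are the conormal twists $\cO_X(-\ell Y)|_Y$); and second, an effectivity/algebraization statement $\operatorname{Pic} X\to\operatorname{Pic}\hat{X}$ is an isomorphism, which is where the hypothesis that $X\setminus Y$ is a local complete intersection enters (through Grothendieck's effectivity criterion for $\mathrm{Leff}$, using that $Y$ is ample so that $X\setminus Y$ is ``quasi-affine-like'' and the local complete intersection condition controls local Picard groups / local cohomology in degrees $\leq 2$).

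First I would recall precisely the version of \cite[Expos\'e XII, Corollary 3.6]{SGA2} being cited: for a projective scheme $X$ over a base, $Y$ an ample effective Cartier divisor, and appropriate depth/cohomological conditions, the restriction $\operatorname{Pic} X\to\operatorname{Pic} Y$ is injective, and is an isomorphism when in addition the local conditions away from $Y$ are satisfied. I would then check that our hypotheses match the hypotheses of that corollary. The cohomological vanishing $H^i(Y,\cO_X(-\ell Y)|_Y)=0$ for $i=1,2$ and every $\ell>0$ is exactly the condition needed to compare $\operatorname{Pic} Y$ with the Picard group of the formal completion, by dévissage along the thickenings $Y\subseteq Y_2\subseteq Y_3\subseteq\cdots$. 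The condition that $X\setminus Y$ be a local complete intersection — hence, by Lemma \ref{lem:rational_sing}(a) applied in the relevant setting, Cohen–Macaulay, with the ideal locally generated by a regular sequence — guarantees the depth condition $\operatorname{depth}_{\cI_Z}\cO_X\geq 3$ on the open part away from $Y$ that is required for Grothendieck's algebraization (effectivity) statement, so that $\operatorname{Pic} X\to\operatorname{Pic}\hat{X}$ is bijective as well. Composing the two isomorphisms yields the claim.

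The main obstacle I anticipate is matching the precise hypotheses: \cite{SGA2} states its Lefschetz conditions in the language of $\mathrm{Lef}(X,Y)$ and $\mathrm{Leff}(X,Y)$ and phrases the depth hypotheses in terms of local cohomology $H^i_Z$ for $Z$ the complement, rather than in the concrete form stated here. So the real work is a dictionary: translating ``$X\setminus Y$ is a local complete intersection'' into the statement that $X\setminus Y$, and more to the point the punctured local rings along $Y$, have depth at least $3$ (using that a local complete intersection is Cohen–Macaulay, so its local rings have depth equal to dimension, which along the generic points of $Y$ and at the relevant codimension-$\leq 2$ points forces the needed inequality), and translating the two displayed cohomology vanishings into $\mathrm{Lef}(X,Y)$. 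Once this translation is in place the statement follows immediately from the cited corollary; I would therefore keep the proof short, cite \cite{SGA2} for the hard analytic input, and merely verify hypotheses.

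Concretely, the write-up would read: by \cite[Expos\'e XII, Corollary 3.6]{SGA2} it suffices to check the Lefschetz condition $\mathrm{Leff}(X,Y)$, which holds because $Y$ is an ample effective Cartier divisor, the complement $X\setminus Y$ is a local complete intersection and hence Cohen–Macaulay with local rings of depth equal to their dimension (so $\geq 3$ at the points that matter), and the graded pieces of the thickenings of $Y$ are $\cO_X(-\ell Y)|_Y$, whose $H^1$ and $H^2$ vanish by hypothesis; these vanishings give $\mathrm{Lef}(X,Y)$ and the depth condition gives effectivity, whence $\operatorname{Pic} X\to\operatorname{Pic}\hat{X}\to\operatorname{Pic} Y$ are both isomorphisms.
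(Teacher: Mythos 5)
Your proposal is correct and takes essentially the same approach as the paper: the paper offers no independent proof but simply cites \cite[Expos\'e XII, Corollary 3.6]{SGA2}, and your argument is exactly the standard unpacking of that citation (comparison of $\Pic X$, $\Pic \hat{X}$ and $\Pic Y$ via the $\mathrm{Lef}/\mathrm{Leff}$ conditions, the infinitesimal d\'evissage along the thickenings of $Y$ using the stated $H^1$, $H^2$ vanishings, and the depth bound supplied by the local complete intersection hypothesis). The only caveat worth recording is that the required depth $\geq 3$ at points of $X\setminus Y$ follows from the Cohen--Macaulay property only when the relevant local rings have dimension at least $3$, which is automatic in all of the paper's applications since $\dim X\geq 4$.
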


\begin{cor}[Finding terminal Fano pencils of rank one]\label{prop:FanoHyperPencil-base-Take2} Let $X$ be a Fano variety of dimension at least $4$ whose singularities are rational and whose class group is generated by a prime Weil divisor $H$. Let $d\in \{1,2,\ldots,i_X-1\}$ and let $\caL\subseteq  H^0(X, \cO_X(dH))$ be an $H$-special pencil of Cartier divisors such that $X\setminus \Bs \caL$ is a local complete intersection and which has a  
terminal member smooth in a neighborhood of $\Bs \caL$. Then $\caL$ is a terminal rank one Fano pencil and every member of $\caL$ other than $dH$ is non-degenerate. 
\end{cor}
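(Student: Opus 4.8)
The plan is to assemble the statement from the lemmas already proved in this section, supplemented by the Grothendieck--Lefschetz theorem, Lemma \ref{lem:Grothencieck-Lefschetz}. First I would record a few structural preliminaries. Since $X$ is Fano with $\Cl(X)\cong\Z\langle H\rangle$, the ample divisor $-K_X\sim i_XH$ is $\Q$-Cartier, so $H$ and hence every Weil divisor is $\Q$-Cartier; thus $X$ is $\Q$-factorial and $(X,H)$ is a polarized $\Q$-factorial pair with $H$ ample. As $\caL$ is a pencil of Cartier divisors having $dH$ as a member, $\cO_X(dH)$ is an ample invertible sheaf, every member of $\caL$ is an ample Cartier divisor linearly equivalent to $dH$, and $\cO_X(\ell dH)\cong\cO_X(dH)^{\otimes\ell}$ is ample invertible for all $\ell\geq1$. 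Moreover $\Bs\caL\subseteq\Supp Y$ for every member $Y$, so $X\setminus Y$ is an open subset of the local complete intersection $X\setminus\Bs\caL$, hence is itself a local complete intersection.

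Next I would treat integrity, normality, and the Fano property of the members. The hypothesized terminal member of $\caL$ which is smooth near $\Bs\caL$ is, being a member, also Cartier there, so Corollary \ref{cor:H-special-members} applies — its extra class-group hypothesis being precisely $\Cl(X)=\Z\langle H\rangle$ — and shows that every member of $\caL$ other than $dH$ is a prime divisor which is smooth and Cartier on some neighborhood of $\Bs\caL$; in particular all such members are irreducible. Since $\caL$ has a terminal member, Lemma \ref{lem:Q-term-crit} shows $\caL$ is a terminal pencil, so its general members are terminal and hence normal. A general member $Y$ is then a normal prime divisor, Cartier (so Cartier in codimension $2$), with $Y\sim dH$ and $d<i_X$; Lemma \ref{lem:fat-member-take0} therefore shows $Y$ is a Fano variety.

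The heart of the argument is the Picard rank, and here I would invoke Grothendieck--Lefschetz for \emph{every} member $Y\neq dH$. Such a $Y$ is an ample effective Cartier divisor on the normal variety $X$, and $X\setminus Y$ is a local complete intersection by the preliminaries, so it only remains to verify $H^i(Y,\cO_X(-\ell Y)|_Y)=0$ for $i=1,2$ and all $\ell>0$. Using $\cO_X(-\ell Y)|_Y\cong\cO_X(-\ell dH)|_Y$ and restricting to $Y$ the structure sequence $0\to\cO_X(-Y)\to\cO_X\to\cO_Y\to0$ twisted by $\cO_X(-\ell dH)$, one obtains the exact sequence $0\to\cO_X(-(\ell+1)dH)\to\cO_X(-\ell dH)\to\cO_X(-\ell dH)|_Y\to0$, whose long exact cohomology sequence reduces the claim to $H^j(X,\cO_X(-mdH))=0$ for $0\leq j\leq\dim X-1$ and all $m\geq1$. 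This vanishing holds by Lemma \ref{lem:rational_sing}(c), since $X$ has rational singularities and $\cO_X(mdH)$ is ample, and it is precisely here that the hypothesis $\dim X\geq4$ (giving $\dim X-1\geq3$) is used. Hence $\Pic X\to\Pic Y$ is an isomorphism; since $\Pic X$ is a nonzero subgroup of $\Cl(X)\cong\Z$ it is isomorphic to $\Z$, so $\Pic Y\cong\Z$, and as $Y$ is projective with an ample divisor, $\rho(Y)=\operatorname{rank}\NS(Y)$ satisfies $1\leq\rho(Y)\leq\operatorname{rank}\Pic Y=1$, whence $\rho(Y)=1$. Putting everything together: general members of $\caL$ are terminal Fano varieties of Picard rank one, so $\caL$ is a terminal rank one Fano pencil; and every member $Y\neq dH$ is irreducible with $\rho(Y)=1$, hence lies outside $\delta(\caL)$ and is therefore non-degenerate. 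The one step requiring genuine care is the cohomology vanishing feeding Grothendieck--Lefschetz, but it is immediate from Lemma \ref{lem:rational_sing}(c); I do not expect a serious obstacle.
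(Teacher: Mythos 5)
Your proposal is correct and follows essentially the same route as the paper: the polarized $\Q$-factorial setup, Corollary \ref{cor:H-special-members} for primeness of members other than $dH$, Lemmas \ref{lem:Q-term-crit} and \ref{lem:fat-member-take0} for terminality and the Fano property of general members, and the twisted structure sequence plus the vanishing of Lemma \ref{lem:rational_sing}(c) to feed the Grothendieck--Lefschetz Lemma \ref{lem:Grothencieck-Lefschetz} and get $\Pic(Y)\cong\Pic(X)\cong\Z$. The only differences are that you spell out a few points the paper leaves implicit (open subsets of local complete intersections are local complete intersections, and the passage from $\Pic(Y)\cong\Z$ to $\rho(Y)=1$), which is harmless.
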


\begin{proof}
Since $H$ generates $\Cl(X)$, $X$ is in particular $\Q$-factorial and $H$ is ample, so the pair $(X,H)$ is polarized $\Q$-factorial. Since $\caL$ has a terminal member and $d<i_X$, general members of $\caL$ are terminal Fano varieties by Lemma \ref{lem:Q-term-crit} and Lemma \ref{lem:fat-member-take0}. By Corollary \ref{cor:H-special-members} every member $Y$ of $\caL$ other than $dH$ is prime. By assumption the divisor $Y$ is Cartier and, since  $\Cl(X)\cong\Z$, it is necessarily ample. Then $\cO_X(Y)$ is an ample invertible sheaf, so we have exact sequences $$0\to \cO_X(-(\ell+1)Y)\to \cO_{X}(-\ell Y) \to \cO_{X}(-\ell Y)|_{Y} \to 0 \quad \textrm{for every } \ell>0.$$ Since $\cO_X(Y)$ is ample, by \cite[Corollary 7.67]{Vanishing-Theorems-Cplx-Manifolds} (see also Lemma \ref{lem:rational_sing}(c)), we have $H^{i}(X,\cO_{X}(-\ell Y))=0$ for every $i\leq \dim X-1$ and $\ell>0$. Since $\dim X\geq 4$, the associated long exact sequence of cohomology gives $H^{i}(Y,\cO_X(-\ell Y)|_Y)=0$ for every $i=1,2$ and $\ell>0$. Since $X\setminus Y$ is contained in $X\setminus \Bs \caL$, Lemma \ref{lem:Grothencieck-Lefschetz} implies that $\Pic(Y)\cong \Pic(X)\cong \Z$.
\end{proof}

Combining the above results we obtain the following theorem.

\begin{thm}[Mori fiber completions from $H$-special pencils]\label{thm:FanoHyperPencil-2}
Let $X$ be a Fano variety of dimension at least $4$ whose singularities are rational and whose class group is generated by a prime Weil divisor $H$. Assume that $X\setminus H$ is terminal and that for some $d\in \{1,2,\ldots,i_X-1\}$ there exists a Cartier divisor $F\sim dH$ other than $dH$ for which the following hold:
\begin{enumerate}[(a)]
\item $F$ is terminal,
\item $F\cap H$ is irreducible and contained in $F_\reg$, 
\item $X\setminus (F\cap H)$ is a local complete intersection,  
\item If $d=1$ then $F\cap H$ is either a smooth  scheme or its support is contained in $\Sing(H)$.
\end{enumerate}
Then $X\setminus H$ admits a Mori fiber completion $\pi\:V\to \P^1$ such that all members of the pencil $\caL=\langle F,dH \rangle$ other than $dH$ appear as fibers of $\pi$. 
\end{thm}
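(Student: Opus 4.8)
The plan is to verify that the pencil $\caL = \langle F, dH\rangle$ is a terminal rank one Fano pencil that admits a compatible thrifty resolution, and then to invoke Corollary \ref{cor:MFS-Completion} together with the observation that $X_{\delta(\caL)} \setminus \Bs \caL$ is isomorphic to $X \setminus H$. First I would check that the hypotheses of Corollary \ref{prop:FanoHyperPencil-base-Take2} are satisfied: $X$ is a Fano variety of dimension $\geq 4$ with rational singularities and $\Cl(X) = \Z\langle H\rangle$; the value $d$ lies in $\{1,\ldots,i_X-1\}$; and $\caL$ is an $H$-special pencil of Cartier divisors. The $H$-special conditions (Definition \ref{def:H-special_pencil}) hold: $dH$ is a member by construction, $\Bs\caL = F\cap H$ is irreducible by hypothesis (b), and in the case $d=1$ hypothesis (d) gives that $\Bs\caL$ is smooth or supported in $\Sing H$. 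Hypothesis (c) gives that $X\setminus \Bs\caL$ is a local complete intersection, and hypotheses (a),(b) say that $F$ is a terminal member smooth in a neighborhood of $\Bs\caL$ (smoothness along $\Bs\caL = F\cap H \subseteq F_\reg$, and being Cartier automatically on all of $X$). Thus Corollary \ref{prop:FanoHyperPencil-base-Take2} applies and tells us that $\caL$ is a terminal rank one Fano pencil and that every member other than $dH$ is non-degenerate, i.e. $\delta(\caL) = \{[dH]\}$ or possibly $\delta(\caL)$ is empty, but in any case $\delta(\caL) \subseteq \{[dH]\}$.

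Next I would produce a compatible thrifty resolution. The natural candidate is $\tau = \gamma\circ\tau'$, where $\gamma\:\Gamma\to X$ is the graph resolution and $\tau'\:X'\to\Gamma$ (equivalently $\tau'\:X'\to\widetilde\Gamma$, since $\Gamma$ will turn out to be normal near the relevant locus) is a $\Q$-factorial terminalization of the normalized graph $\widetilde\Gamma$, furnished by Lemma \ref{lem:terminalization}. To see that this $\tau$ is thrifty with discrepancy locus contained in $\delta(\caL)$, I would apply Corollary \ref{prop:controled-discrepancy}: I need $X_\delta$ to be $\Q$-factorial terminal and property $(\mathbf{TQ}_\delta)$ to hold, for $\delta$ the degeneracy locus of $\caL$ (which contains $[dH]$). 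We have $X_{\delta} \subseteq X \setminus H$ when $[dH] \in \delta$, and $X\setminus H$ is terminal by hypothesis and $\Q$-factorial since $\Cl(X) = \Z\langle H\rangle$ forces $\Q$-factoriality; so $X_\delta$ is $\Q$-factorial terminal. Property $(\mathbf{TQ}_\delta)$ asks that for $p \notin \delta$ the member $\caL_p$ be prime and, near $\Bs\caL$, klt, smooth in codimension $2$, and $\Q$-factorial: primeness comes from Corollary \ref{cor:H-special-members} (with the class-group hypothesis in place), and near $\Bs\caL$ the member $\caL_p$ is actually smooth (by Corollary \ref{cor:H-special-members} again, or directly from Lemma \ref{lem:integral-members}), hence klt, smooth in codimension $2$, and $\Q$-factorial there. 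So Corollary \ref{prop:controled-discrepancy} gives that $\Gamma_\delta$ is $\Q$-factorial terminal and the discrepancy locus of the thrifty resolution $\tau$ is contained in $\delta \subseteq \delta(\caL)$; combined with the fact that $\tau$ is by construction a thrifty $\Q$-factorial terminal resolution, this makes $\tau$ a compatible thrifty resolution in the sense of Definition \ref{def:Fano-compatible_res}.

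Finally I would apply Corollary \ref{cor:MFS-Completion}: since $\caL$ is a terminal rank one Fano pencil admitting a compatible thrifty resolution, $X_{\delta(\caL)}\setminus \Bs\caL$ admits a Mori fiber completion $\pi\:V\to\P^1$, and for every $p \in \P^1\setminus\delta(\caL)$ the fiber $\pi^*(p)$ is isomorphic to $\caL_p$. It remains to identify $X_{\delta(\caL)}\setminus\Bs\caL$ with $X\setminus H$. Since $\delta(\caL) \subseteq \{[dH]\}$ and in fact $[dH] \in \delta(\caL)$ (the member $dH$ is non-reduced when $d \geq 2$, and when $d=1$ one checks it lies in the degeneracy locus, or else one simply notes that if $[H]\notin\delta(\caL)$ then $X_{\delta(\caL)}\setminus\Bs\caL$ only grows and still contains $X\setminus H$ as the complement of the single fiber $H$), we have $X_{\delta(\caL)} = X\setminus dH = X\setminus H$ (as sets; reducedness of $H$ as a member is irrelevant here since we remove its support), and $\Bs\caL = F\cap H \subseteq H$, so $X_{\delta(\caL)}\setminus\Bs\caL = X\setminus H$. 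Thus $\pi\:V\to\P^1$ is a Mori fiber completion of $X\setminus H$, and all members of $\caL$ other than $dH$, lying over $\P^1\setminus\delta(\caL)$, appear as fibers of $\pi$, as claimed.

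I expect the main technical point to be the careful bookkeeping around the degeneracy locus — in particular confirming that $[dH]$ (and nothing in $\P^1\setminus\delta(\caL)$ that we care about) is what gets removed, so that $X_{\delta(\caL)}\setminus\Bs\caL$ is exactly $X\setminus H$ rather than a strictly smaller or incomparable open set — and verifying property $(\mathbf{TQ}_\delta)$ cleanly, which hinges on the integrity and smoothness results of Lemma \ref{lem:integral-members} and Corollary \ref{cor:H-special-members} applying with the hypotheses at hand, especially in the delicate case $d=1$ where $H$ itself is a member of $\caL$.
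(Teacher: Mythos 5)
Your proposal follows the paper's own proof essentially step for step: check that $\caL=\langle F,dH\rangle$ is an $H$-special pencil of Cartier divisors, invoke Corollary \ref{prop:FanoHyperPencil-base-Take2} to see it is a terminal rank one Fano pencil with degeneracy locus contained in $\{(\psi_\caL)_*H\}$, use Corollary \ref{cor:H-special-members} to verify $(\mathbf{TQ}_\delta)$, apply Corollary \ref{prop:controled-discrepancy} (with $X_\delta=X\setminus H$ $\Q$-factorial terminal) to make thrifty resolutions compatible, and conclude with Corollary \ref{cor:MFS-Completion}. The only divergence is bookkeeping: the paper runs Corollary \ref{prop:controled-discrepancy} directly with $\delta=\{(\psi_\caL)_*H\}$ rather than with $\delta(\caL)$, which sidesteps the question you worry about of whether $(\psi_\caL)_*H$ actually lies in the degeneracy locus (delicate only when $d=1$); your explicit construction of the thrifty resolution via Lemma \ref{lem:terminalization} and the identification $X_{\delta}\setminus\Bs\caL=X\setminus H$ are exactly what the paper leaves implicit.
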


\begin{proof} 
The pair $(X,H)$ is a  polarized $\Q$-factorial pair. Since $F$ is Cartier, the assumptions (b) and (d) imply that $\caL$ is an $H$-special pencil and on some neighborhood of $\Bs \caL$ the divisor $F$ is smooth, in particular $\Q$-factorial. Let $\psi_\caL\:X\map \P^1$ be the dominant rational map determined by $\caL$. By Corollary \ref{prop:FanoHyperPencil-base-Take2}, $\caL$ is a terminal rank one Fano pencil with degeneracy locus contained in $\{ (\psi_\caL)_*H \}$. Since by Corollary \ref{cor:H-special-members} every member of $\caL$ other than $dH$ is smooth in a neighborhood of $\Supp(\Bs \caL)$, property $(\mathbf{TQ}_\delta)$ (see Notation \ref{nota:open-subsets}) holds for $\delta=\{ (\psi_\caL)_*H \}$. Since $X\setminus H$ is $\Q$-factorial and terminal by assumption, Corollary \ref{prop:controled-discrepancy} implies that every thrifty $\Q$-factorial terminal resolution of $\caL$ is compatible, that is, its degeneracy locus is contained in $\delta$. The assertion then follows from Corollary \ref{cor:MFS-Completion}.
\end{proof}

As a corollary we obtain Mori fiber completions of affine varieties of dimension $\geq 4$ whose general fibers are completions of affine Fano varieties in the sense of Definition \ref{def:Affine-Fano}.

\begin{cor}[Affine Fano fibers] \label{cor:affine_Fano} In the setting of Theorem \ref{thm:FanoHyperPencil-2} assume further that $F$ is $\Q$-factorial, that $(F\cap H)_\redd$ is klt and that $d\leq i_X-2$. Let $\pi\:V\to \P^1$ be the Mori fiber completion of the affine variety $U=X\setminus H$ associated to the pencil $\caL=\langle F,dH \rangle$. Then the general fibers of $\pi|_U\:U\to \P^1$ are affine Fano varieties.
\end{cor}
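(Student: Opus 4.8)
The plan is to identify, for a general $p\in\P^1$, the fibre of $\pi|_U$ over $p$ with $\caL_p\setminus S$, where $\caL_p$ is the corresponding member of the pencil $\caL=\langle F,dH\rangle$ and $S=(F\cap H)_\redd$, and then to check that $(\caL_p,S)$ completes $\caL_p\setminus S$ as in Definition \ref{def:Affine-Fano}. First I would collect facts about general members of $\caL$. By Corollary \ref{prop:FanoHyperPencil-base-Take2}, $\caL$ is a terminal rank one Fano pencil, every member other than $dH$ is non-degenerate, and general members have $\Pic\cong\Z$; since $F$ is now assumed $\Q$-factorial, Lemma \ref{lem:Q-term-crit} strengthens this to: $\caL$ is a $\Q$-factorial terminal pencil, so a general member $\caL_p$ is an irreducible normal projective $\Q$-factorial terminal Fano variety with $\rho(\caL_p)=1$. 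By Corollary \ref{cor:H-special-members}, every member other than $dH$, in particular $\caL_p$, is smooth in a neighbourhood of $\Bs\caL$; since $\Supp\Bs\caL=F\cap H$ is irreducible of codimension one in $\caL_p$, the reduced divisor $S$ is a prime divisor on $\caL_p$, Cartier near its support.

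Second, the identification of fibres. The degeneracy locus $\delta(\caL)$ is the single point corresponding to the member $dH$, and $U=X\setminus H$ coincides with $X_{\delta(\caL)}\setminus\Bs\caL$ in the notation of Corollary \ref{cor:MFS-Completion}. For $p\notin\delta(\caL)$ that corollary gives an isomorphism $\pi^*(p)\cong\caL_p$; chasing it through $\pi^*(p)\cong\pp^*(p)\xrightarrow{\gamma_p}\caL_p$ of Proposition \ref{lem:graph-reso-members} and through the identification $U\cong\Gamma_{\delta(\caL)}\setminus E$ of Corollary \ref{cor:MFS-Completion}, with $E=\gamma^{-1}(\Bs\caL)_\redd$, one sees that $\pi^*(p)\cap U$ corresponds to $\caL_p\setminus S$. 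As $H$ is ample, $U=X\setminus H$ is affine, hence so is its closed subvariety $\caL_p\setminus S$.

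It remains to verify that $(\caL_p,S)$ is plt with $-(K_{\caL_p}+S)$ ample; the other conditions of Definition \ref{def:Affine-Fano} were noted above. For plt: over $\caL_p\setminus S$ the pair restricts to $(\caL_p\setminus S,0)$, which is terminal because $\caL_p$ is; over a neighbourhood of $S$, the divisor $S$ is a normal prime Weil divisor Cartier in codimension two (as $\caL_p$ is smooth there and $S=(F\cap H)_\redd$ is klt, hence normal, by hypothesis), so inversion of adjunction (Lemma \ref{lem:different}) shows $(\caL_p,S)$ is plt near $S$; hence $(\caL_p,S)$ is plt. For ampleness: since $\caL_p\sim F$ is a normal prime Cartier divisor on the $\Q$-factorial $X$, adjunction (Lemma \ref{lem:different}) gives $K_{\caL_p}=(K_X+\caL_p)|_{\caL_p}$, and with $-K_X\sim i_X H$, $\caL_p\sim dH$ this yields $-K_{\caL_p}\sim(i_X-d)H|_{\caL_p}$ in $\Cl(\caL_p)$. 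Now $\Cl(\caL_p)_{\Q}\cong\Q$ since $\caL_p$ is $\Q$-factorial of Picard rank one; let $\bar h$ be its ample generator and write $[H|_{\caL_p}]=\alpha\bar h$, so $\alpha>0$ as $-K_{\caL_p}$ is ample and $i_X-d\geq 2$. Because the effective divisor $H|_{\caL_p}$ is supported on the irreducible set $S$, we have $H|_{\caL_p}=\mu S$ for an integer $\mu\geq1$, hence $[S]=\tfrac{\alpha}{\mu}\bar h$ and
\[
-(K_{\caL_p}+S)=\alpha\bigl(i_X-d-\tfrac{1}{\mu}\bigr)\bar h\qquad\text{in }\Cl(\caL_p)_{\Q}.
\]
As $d\leq i_X-2$ gives $i_X-d\geq 2>1\geq\tfrac{1}{\mu}$, this is a positive multiple of $\bar h$, so $-(K_{\caL_p}+S)$ is ample. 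By Definition \ref{def:Affine-Fano}, $\caL_p\setminus S$, hence the general fibre of $\pi|_U$, is an affine Fano variety.

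The main obstacle is the ampleness step: the scheme $\caL_p\cap H$ may be non-reduced, and it is exactly the multiplicity $\mu\geq 1$ of $S$ in the restricted divisor $H|_{\caL_p}$ that makes the hypothesis $d\leq i_X-2$ (sharp when $\mu=1$) enough to get $-(K_{\caL_p}+S)$ ample, not merely nef. A secondary, bookkeeping-type point is following the isomorphism of Corollary \ref{cor:MFS-Completion} through the construction carefully enough to identify $\pi^*(p)\cap U$ precisely with $\caL_p\setminus S$.
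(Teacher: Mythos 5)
Your proposal is correct and follows essentially the same route as the paper's proof: identify the general fibre pair with $(\caL_p,(\caL_p\cap H)_\redd)$ via Corollary \ref{cor:MFS-Completion}, obtain that $\caL_p$ is a $\Q$-factorial terminal Fano variety of Picard rank one from Lemma \ref{lem:Q-term-crit} and Corollary \ref{prop:FanoHyperPencil-base-Take2}, get the plt property from inversion of adjunction (Lemma \ref{lem:different}) together with terminality off $S$, and deduce ampleness of $-(K_{\caL_p}+S)$ by adjunction using $d\leq i_X-2$. Your treatment of the multiplicity $\mu$ of $S$ in $H|_{\caL_p}$ is a careful refinement of the paper's computation, which tacitly treats this restriction as reduced; to justify that $\mu$ is a positive integer (rather than merely a positive rational, since $H$ is a priori only $\Q$-Cartier), note that $X$ is regular, hence $H$ is Cartier, at the generic point of $S$, because a member of $\caL$ is Cartier and smooth there and Lemma \ref{lem:pencil-local-smooth} applies.
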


\begin{proof} Let $B=V\setminus U$. For a general point $p\in \P^1$, let $V_p=\pi^*p$ and let $B_p$ denote the reduction of the restriction of $B$ to $V_p$ as a Weil divisor. By Lemma \ref{lem:Q-term-crit}, $V_p$ is a $\Q$-factorial terminal Fano variety of Picard rank one. On the other hand, it follows from the proof of Corollary \ref{cor:MFS-Completion} that the log pair $(V_p,B_p)$ is isomorphic to the log pair $(\caL_p, (\caL_p\cap H)_\redd)$. Since $(\caL_p\cap H)_\redd=(F\cap H)_\redd$ is irreducible and klt, the log pair $(V_p,B_p)$ is plt by Lemma \ref{lem:different}. We have $-(K_{\caL_p}+({\caL_p}\cap H)_\redd)=(-i_X+d) H|_{\caL_p}+ H|_{\caL_p}$ by adjunction, so $-(K_{V_p}+B_p)$ is ample.
\end{proof}

\smallskip \section{Mori fiber completions of affine spaces over $\P^1$}\label{sec:Affine-Spaces}

We now apply our results to the construction of $\Q$-factorial terminal Mori fiber completions of affine spaces over $\P^1$. We begin with a review of some known examples.

\subsection{Some known examples}

\begin{example}[Examples of product type]\label{ex:products}
For every $n\geq 1$ and every $\Q$-factorial terminal Fano variety $X$ of Picard rank one which is completion of $\A^n$, the projection $\pi=\pr_2\:V=X\times \P^1\to \P^1$ is a Mori fiber completion of $\A^n\times \A^1$. For instance, we can take $X=\P^n$, which is the only possibility for $n=1,2$. For $n=3$, smooth Fano threefolds of Picard rank one  which are completions of $\A^3$ have been classified by Furushima \cite{Fu93} (see also \cite{Pr}). These are: $\P^3$, the smooth quadric threefold in $\P^4$, the quintic del Pezzo threefold in $\P^6$, and a four dimensional family of prime Fano threefolds of genus $12$. In higher dimensions, other  examples of smooth Fano completions of $\A^n$ of Picard rank one are the quintic del Pezzo fourfold \cite[Theorem 3.1]{Pr93} in $\P^7$ and Fano-Mukai fourfolds of genus $10$ \cite{PZ-genus10_4folds_and_A4}.
\end{example}

There are several examples of Mori fiber completions of $\A^3$ which are not of product type as in Example \ref{ex:products}. For instance, for every $d\in\{1,2,3,4,5,6,8,9\}$ there exists a Mori fiber completion $\pi\:V\to \P^1$ of $\A^3$, whose general fibers are smooth del Pezzo surfaces of degree $d$. For $d=9$ we have only the locally trivial $\P^2$-bundles over $\P^1$.  A classical example with $d=8$ is recalled below. An example with $d=7$ does not exist, because the generic fiber of the corresponding del Pezzo fibration $\pi\:V\to \P^1$ would be a minimal smooth del Pezzo surface of degree $7$ and such surfaces do not exist over a field of characteristic zero, see \cite[Theorem 29.4]{Manin-cubic_forms}. An example with $d=6$ can be found in \cite[Theorem 1.2]{Pr16} and \cite{Fukuoka19}. A construction for $d=5$ is given in Example \ref{ex:dP5} below.  For examples with $d=1,2,3,4$ see \cite[Theorem 2]{DK4}. For $d\leq 6$ general fibers of the restriction of $\pi\:V\to \P^1$ to $\A^3$ are not isomorphic to $\A^2$. Indeed, otherwise the generic fiber of $\pi\:V\to \P^1$ would be a minimal smooth del Pezzo surface of degree $d\leq 6$ over the function field $\C(\P^1)$, containing a Zariski open subset isomorphic to $\A^2_{\C(\P^1)}$, which is impossible by \cite[Proposition 13]{DK4}. In particular, none of these completions is of product type. Note also that for $d\neq 9$ general fibers of the corresponding Mori fiber spaces are smooth del Pezzo surfaces of Picard rank higher than one, hence are not associated to any terminal rank one Fano pencil (cf.\ Example \ref{ex:low-dim-QT-FanoRk1}).

\begin{example}[A completion of $\A^3$ into a del Pezzo fibration of degree $8$] \label{ex:dP_fibr} 
Let $Q\subseteq \P^4$ be a smooth quadric threefold, let $H$ be a hyperplane section of $Q$ cut by a tangent hyperplane and let $F$ be a smooth hyperplane section of $Q$ such that the scheme-theoretic intersection $C=F\cap H$ is irreducible. Then $H$ is the quadric cone $H\cong \P(1,1,2)$, $F\cong \P^1\times \P^1$ and $C$ is a smooth rational curve. Let $\caL$ be a pencil on $Q$ generated by $F$ and $H$. For a general member $Y$ of $\caL$ the pair $(Y,C)$  is isomorphic to the pair consisting of $\P^1\times \P^1$ embedded as a smooth quadric surface in $\P^3$ and a smooth hyperplane section of it. Let $\alpha\:\widetilde{Q}\to Q$ be the blow-up of the unique singular point $q$ of $H$. Its exceptional divisor is $E\cong \P^2$. Let $\beta\:\widetilde{Q}\to \P^3$ be the contraction of the proper transform of $H$ onto a smooth conic $C'$. The latter is contained in $H_\infty=\beta(E)$, which is a hyperplane of $\P^3$. Let $\tau\:V\to Q$ be the blow-up of $Q$ with center at $C$ and exceptional divisor $D$. We then have a diagram of Sarkisov links 
\[\xymatrix{ & (\widetilde{Q},\alpha_*^{-1}H+E) \ar[dl]_{\beta} \ar[dr]^{\alpha}  & & & (V,D+\tau_*^{-1}H) \ar[dll]_{\tau} \ar[d]^{\pi} 
\\ (\P^3,H_\8)  & & (Q,H) \ar@{-->}[ll]^{\varphi} \ar@{-->}[rr]_{\Psi_\caL} & & \P^1}\]  
where $\pi\:V\to \P^1$ is a del Pezzo fibration of degree $8$. 

Since the hyperplane cutting $H$ is tangent to $Q$, we have $Q\setminus H\cong \A^3$. The variety $V$ is smooth and contains $\A^3$ as the complement of the total transform of $H$. General fibers of $\pi\:V\to \P^1$ have Picard rank $2$. On the other hand, the generic fiber $V_\eta$ of $\pi$ is a smooth quadric surface over the field $\C(\P^1)$ and we have $\rho(V_\eta)=\rho(V/\P^1)=1$. The Picard group of $V_\eta$ is generated by the restriction $D_\eta$ of $D$. We note that since $-(K_{V_\eta}+D_\eta)=D_\eta$ is ample, for every open subset $U\subseteq \P^1$ the open subset $\pi^{-1}(U)\setminus D\subseteq V$ is a relative affine Fano variety over $U$.

The birational map $\varphi=\beta\circ \alpha^{-1}\:Q\dashrightarrow \P^3$ is induced by the linear projection form the point $q\in \P^4$. It restricts to an isomorphism $Q\setminus H\cong \P^3\setminus H_\infty$. The composition $\Psi_\caL\circ \varphi^{-1}\:\P^3\dashrightarrow \P^1$ is given by the pencil $\caL'$ on $\P^3$ generated by $2H_\infty$ and the proper transform $F'\cong \P^1\times \P^1$ of $F$, which is a smooth quadric surface in $\P^3$ intersecting $H_\infty$ along the conic $C'$. 
\end{example}
 
Recall \cite{Fujita81} that the quintic del Pezzo fourfold  $W_5\subset \P^7$ is the intersection of the Grassmannian $\Gr(2,5) \subset \P^9$ with a general linear subspace of codimension $2$. In particular, $\Pic(W_5)\cong \Z\langle H\rangle $, where $H$ is a hyperplane section of $W_5$. It is known that $W_5$ is the unique smooth Fano fourfold with Fano index $i_{W_5}=3$ and $\Pic(W_5) \cong \Z$ generated by an ample generator $H$ such that $H^4 =5$. 

A general hyperplane section of $W_5$, called the quintic del Pezzo threefold, is a smooth Fano threefold $V_5\subseteq \P^6$ with $\Pic(V_5) \cong \Z\langle H_\8\rangle$, Fano index $i_{V_5}=2$ and $H_\8^3 =5$, where $H_\8$ is a hyperplane section of $V_5$.  Again, by \cite{Fujita81} this is the unique Fano threefold with these invariants. By \cite[Theorem A]{Fu93} $V_5$ has a normal hyperplane section $H$ with a unique singular point $p\in H$ of type $A_4$ such that $V_5\setminus H\cong \A^3$. By \cite[Proposition 15]{Fu86} $H$ contains a unique line $L\subseteq \P^6$. The line passes through $p$.

\smallskip
By blowing-up $V_5$ with the center being a suitably chosen anti-canonical curve $C$ in $H$, we now construct completions of $\A^3$ into $\Q$-factorial terminal threefolds with del Pezzo fibrations of degree $5$. In case of a smooth $C$ the construction was communicated to us by Masaru Nagaoka and in case of nodal $C$ it was suggested by a referee.
 
\begin{example}[Completions of $\A^3$ into del Pezzo fibrations of degree $5$]\label{ex:dP5} Let $H\subset V_5$ be a normal hyperplane section with a unique singular point $p\in H$ of type $A_4$ such that $V_5\setminus H\cong \A^3$ and let $L\subseteq \P^6$ be the unique line on $H$. The complement $H\setminus L$ is isomorphic to $\A^2$, so $\Cl(H)\cong \langle L\rangle\cong \Z$. Since $i_{v_5}=2$, by adjunction $\cO_H(-K_H)\cong \cO_{V_5}(H)|_H$ (see Lemma \ref{lem:different}), so $H$ is a singular del Pezzo surface of degree ${(-K_H)}^2=H^3=5$. Furthermore, given a hyperplane section $F$ of $V_5$ not containing $L$, we have $(-K_H)\cdot L=F|_H\cdot L=F\cdot L=1$. So $-K_H\sim 5L$ and hence, $\Pic(H)\subset \Cl(H)$ is the subgroup of index $5$ generated by the class of $-K_H$.

Every divisor $C$ in the complete linear system $|-K_{H}|$ appears as the base locus of a unique pencil $\caL_C$ on $V_5$ containing $H$ as a member. Indeed, since $V_5$ is a smooth Fano threefold, we have $H^1(V_5,\cO_{V_5})\cong 0$ and the assertion follows from long exact sequence of cohomology associated with the short exact sequence of sheaves $$0\to \cO_{V_5}\to\cO_{V_5}(H)\to\cO_H(-K_H)\to 0.$$ By adjunction general members of $\caL_C$ are del Pezzo surfaces of degree $5$. Denote the blowup of $V_5$ with center $C$ by $\tau\:V_C\to V_5$. Let $E$ be the exceptional divisor of $\tau$ (possibly reducible and non-reduced). The rational map $\psi_{\caL_C}\:V_5\map \P^1$ lifts to a fibration $\pi\:V_C\to \P^1$ whose general fiber is a del Pezzo surface of degree $5$ and $V_C$ contains $\A^3$ as the complement of the total transform of $H$. The class group of $V_C$ is generated be the classes of irreducible components of $E$ and of the proper transform $\tau_*^{-1}H$, which is a fiber of $\pi$. Singularities and the $\Q$-factoriality of $V_C$ depend on the properties of the chosen anticanonical divisor $C$.  

Let us briefly recall some elements concerning the geometry of anti-canonical divisors on $H$. We let $\gamma\:S\to H$ be the minimal resolution of singularities. We have $K_S\sim \gamma^*K_H$ and the exceptional locus $\Exc \gamma$ is a chain $E_1+E_2+E_3+E_4$ of $(-2)$-curves. It is known that the proper transform  $L'$ of the unique line $L$ on $H$ is a $(-1)$-curve meeting $\Exc \gamma$ only once and normally at a point of $E_3$. There exists a birational morphism $\sigma\:S\to \P^2$ which contracts $L'\cup E_3\cup E_2 \cup E_1$ and maps $E_4$ onto a line $\ell$. 
We have $\sigma^*K_{\P^2}\sim K_S-E_1-2E_2-3E_3-4L'$, so we obtain 
\begin{equation}\label{eq:K_H}
\sigma_*\gamma^*(-K_H)\sim -K_{\P^2}\text{\ \ and\ \ } (-K_H)\sim \gamma_*\sigma^*(-K_{\P^2}) -4L.
\end{equation}
Given $C\in |-K_H|$ we put $C'=\gamma_*^{-1}C$ and $\ov C=\sigma(C')$. Clearly, $\deg \ov C\leq 3$. Put $q=\sigma(L')$. Write $\sigma_*\gamma^*C=m\ell+D$, where $m\in\{0,1,2,3\}$ and $D$ is an effective divisor of degree $3-m$ not containing $\ell$ in its support. By \eqref{eq:K_H} $C=(3m-4)L+\gamma_*\sigma^*D$.

We now discuss the geometry of $V$ for some particular choices of $C$.

Consider the case $p\notin C$. Since $L$ is ample, the equality $L\cdot C=1$ implies that $C$ is irreducible and reduced. The induced morphisms $\gamma\:C'\to C$ and $\sigma\:C'\to \ov C$ are isomorphisms. By \eqref{eq:K_H} $\sigma_*\gamma^*C=\ov C$ is either an elliptic curve or a rational nodal curve or a rational cuspidal curve smooth at $q$ and intersecting $\ell$ with multiplicity $3$ at $q$. The fact that $H$ is smooth along $C$ implies that general fibers of $\pi\:V_C\to \P^1$ are smooth del Pezzo surfaces. Furthermore, since $C$ is irreducible and reduced, $E$ is irreducible and reduced. Since $E$ is Cartier, $\Pic(V_C)=\Cl(V_C)$ is freely generated by the classes of $E$ and $\tau_*^{-1}H$. In particular, $V_C$ is $\Q$-factorial and has Picard rank $2$. If $C$ is an elliptic curve then $V_C$ is smooth. If $C$ is nodal then locally analytically around the node the blowup of $C$ is isomorphic to the blowup of $\{xy=z=0\}\subseteq \A^3$, so its unique singular point is analytically isomorphic to $((0,0,0),[1:0])\in \{zv=xyu\}\subseteq \A^3\times \P^1$. Thus in this case $V_C$ has a unique ordinary double point supported on the inverse image by $\tau$ of the singular point of $C$. Finally, if $C$ is cuspidal then locally analytically around the cusp the blowup of $C$ is the blowup of $\{x^2+y^3=z=0\}\subseteq \A^3$. In this case $V_C$ has a unique compound du Val singularity $cA_2$ supported on the inverse image by $\tau$ of the singular point of $C$. In any case $V_C$ is a $\Q$-factorial terminal threefold and $\pi\:V_C\to \P^1$ is a del Pezzo fibration. 

Consider the case when $C$ is cut out by a general hyperplane section $F$ of $V_5$ passing through $p$. In particular, $F$ is smooth away from $p$ by Bertini's theorem and it does not contain $L$. The equality $F\cdot L=1$ implies that $F$ is smooth at $p$, hence $F$ is smooth. General fibers of $\pi\:V_C\to \P^1$ are thus smooth del Pezzo surfaces of degree $5$. By the choice of $F$, $L$ is not an irreducible component of $C$. In the notation as above, using the identity $C=(3m-4)L+\gamma_*\sigma^*D$ one checks that $m=1$ and that $D$ is a conic intersecting $\ell$ normally at $q$. By the generality assumption on $F$, $D$ is a smooth conic and then $C$ is an irreducible and reduced nodal rational curve smooth off $p$. So as in the previous situation, $V_C$ is a terminal $\Q$-factorial threefold with a unique ordinary double point supported on the inverse of $p$ by $\tau$ and $\pi\:V_C\to \P^1$ is a del Pezzo fibration. 
\end{example}
 
\begin{rem} In Example \ref{ex:dP5} one can easily work out all possible geometries of $C\in |-K_H|$ using the equality $C=(3m-4)L+\gamma_*\sigma^*D$. For instance, if $C$ does not contain $L$ in its support then one shows that $\ov C$ is a reduced conic, smooth or singular, meeting $\ell$ normally at $q$. Assume that it is singular. Then $C$ is a sum of two smooth rational curves intersecting at $p$ only and the corresponding threefold $V_C$ has a unique ordinary double point. The class group of $V_C$ is freely generated by classes of the two irreducible components of $E$ and the class of $\tau_*^{-1}H$ and $V_C$ is not $\Q$-factorial.
\end{rem}

\begin{example}[A completion of $\A^4$ into a quintic del Pezzo threefold fibration]\label{exa:delPezzofourfold-Take1} 
By \cite[Theorem 3.1(iv)]{Pr93} there exists an open embedding of $\A^4$ into the quintic del Pezzo fourfold $W_5$ such that the complement is a normal hyperplane section $H$ of $W_5$ (its singular locus consists of a unique ordinary double point $p$). Let $\caL$ be a pencil on $W_5$ generated by $H$ and by a general hyperplane section $F$. The base locus of $\caL$ is a smooth del Pezzo surface $S$ of degree $5$. Let $\tau\:V\to W_5$ be the blowup of $W_5$ with center at $S$. The rational map $\psi_\caL\:W_5\map \P^1$ lifts to a Mori fiber space $\pi\:V\to \P^1$ and its general fibers are quintic del Pezzo threefolds. The variety $V$ is smooth and it contains $\A^4$ as the complement of the union of the proper transform of $H$ and of the exceptional divisor $E$ of $\tau$. We note that $E$ intersects a general fiber $V_5$ of $\pi$ along a smooth del Pezzo surface $B$ of degree $5$. In particular, $V_5\setminus B$ is an affine Fano variety. Since $B$ is smooth, by the classification in \cite{Fu93}, $V_5\setminus B$ is not isomorphic to $\A^3$.

 We argue that $V_5\setminus B$ is not super-rigid (see Definition \ref{def:affine_rigid}). Let $\ell\subseteq B$ be a line and let $T\subset V_5$ be the surface swept out by the lines in $V_5$ intersecting $\ell$. The projection from 
 $\ell$ defines a birational map $\alpha\:V_5 \map Q$ to a smooth quadric threefold $Q$ in $\P^4$, which contracts $T$ onto a rational cubic contained in a hyperplane section $Q_0$ of $Q$. The image of $B$ by $\alpha$ is a smooth hyperplane section $Q_\infty $ of $Q$ and $\alpha$ induces an isomorphism $V_5\setminus (B\cup T)\cong Q\setminus (Q_0\cup Q_\infty)$. By Example \ref{ex:dP_fibr}, $Q\setminus (Q_0\cup Q_\infty)$ contains a relative affine Fano variety over $\P^1\setminus \{0,\infty\}$. So  $V_5\setminus B$ contains a relative affine Fano variety over a curve, hence is not super-rigid.
\end{example}

\subsection{Pencils on smooth Fano varieties}\label{sub:ComplSmooth}

Recall that the Grassmannian $\Gr(k,n)$, which parameterizes $k$-dimensional linear subspaces of a complex vector space of dimension $n$, is a smooth Fano variety of dimension $k(n-k)$ with class group isomorphic to $\Z$ and Fano index $i_{\Gr(k,n)}=n$ (see e.g. \cite[Lemma 10.1.1, p. 510]{Dolgachev-Classical_AG_modern_view}). It has a natural cover by affine open subsets isomorphic to $\A^{k(n-k)}$. Namely, denoting the vector space by $V$, we have the Pl\"ucker embedding $$\mathrm{pl}\:\Gr(k,n)\mono \P(\Lambda^k V)=\Proj(\C[\{x_I\}]),$$ where $I$ ranges through the set of subsets of $k$ distinct elements in $\{1,\ldots, n\}$, which associates to a closed point  $\Lambda\in \Gr(k,n)$, represented by a $k \times n$-matrix $A_\Lambda$ of rank $k$, the collection of the $k\times k$-minors of $A_\Lambda$. Then for every subset $I\subseteq \{1,\ldots, n\}$ of $k$ distinct elements the open subset $\Gr(k,n)\setminus\{x_I=0\}$ is isomorphic to $\A^{k(n-k)}$.

\begin{prop}[Mori fiber completions from pencils on Grassmannians] \label{prop:completion_from_pencils} For $k(n-k)\geq 4$, let $H$ be a hyperplane section of $\Gr(k,n)$ such that $\Gr(k,n)\setminus H \cong \A^{k(n-k)}$, let $d\in\{1,\ldots, n-1\}$ and let $F \subset \Gr(k,n)$ be an integral hypersurface such that $F\sim dH$. Assume that $S=F\cap H$ is irreducible and contained in the smooth locus of $F$ and that either $d\geq 2$ or $d=1$ and $S\subseteq \Sing(H)$. Then $\A^{k(n-k)}=\Gr(k,n)\setminus H$ admits a Mori fiber completion over $\P^1$ such that all members of the pencil $\langle F,dH\rangle$ other than $dH$ appear as fibers.
\end{prop}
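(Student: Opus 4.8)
The plan is to deduce this from Theorem~\ref{thm:FanoHyperPencil-2}, applied to $X=\Gr(k,n)$ and to a general member of the pencil generated by the given $F$ and $dH$ rather than to $F$ itself, which, being merely integral, need not be terminal.

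I would first check that $X=\Gr(k,n)$ satisfies the global hypotheses of Theorem~\ref{thm:FanoHyperPencil-2}. As recalled just before the statement, $X$ is a smooth Fano variety of dimension $k(n-k)\geq 4$ with $\Cl(X)=\Pic(X)=\Z\langle H\rangle$, where $H$ is the ample generator (the Pl\"ucker hyperplane class), and with Fano index $i_X=n$; hence $d\in\{1,\ldots,n-1\}=\{1,\ldots,i_X-1\}$. Smoothness immediately gives that $X$ is $\Q$-factorial with rational singularities and that $X\setminus H\cong\A^{k(n-k)}$ is terminal, while $H$ is prime since a splitting $H=A+B$ into nonzero effective divisors would contradict $[A]+[B]=1$ in $\Z$. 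Moreover, since $X$ is smooth, hypothesis (c) of Theorem~\ref{thm:FanoHyperPencil-2} --- that the complement of the base locus be a local complete intersection --- holds automatically no matter which divisor we use.

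Next I would replace $F$ by a smooth member of its pencil. Because $F$ is integral, $F\sim dH$, and $F\neq dH$ (for $d\geq 2$ since $dH$ is nonreduced, for $d=1$ since $F=H$ would leave $S=F\cap H$ undefined), the divisors $F$ and $dH$ generate a pencil $\caL=\langle F,dH\rangle$ on $X$; its base scheme is the scheme-theoretic intersection of $F$ and $dH$, so $\Supp\Bs\caL=\Supp F\cap\Supp H=S$, which is irreducible by hypothesis and, when $d=1$, is contained in $\Sing H$. Thus $(X,H)$ is a polarized $\Q$-factorial pair and $\caL$ is an $H$-special pencil on it (Definition~\ref{def:H-special_pencil}). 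Since $S\subseteq F_\reg$, the member $F$ is smooth and Cartier on the open neighborhood $X\setminus\Sing F$ of $\Bs\caL$, so Corollary~\ref{cor:H-special-members} shows that every member of $\caL$ other than $dH$ is a prime divisor that is smooth and Cartier near $\Bs\caL$; as $\caL$ has no fixed component ($F$ being prime and $\neq H$), Bertini's theorem then makes a general member $F'$ of $\caL$ smooth away from $\Bs\caL$ as well, hence smooth everywhere. In particular $F'\sim dH$ is a terminal Cartier prime divisor, $F'\neq dH$, and $F'\cap H$ has support $S$, contained in $F'_\reg$.

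Finally I would apply Theorem~\ref{thm:FanoHyperPencil-2} to $X$ and $F'$: hypothesis (a) holds because $F'$ is smooth, (b) because $\Supp(F'\cap H)=S$ is irreducible and lies in $F'_\reg$, (c) automatically, and (d) (when $d=1$) because $S\subseteq\Sing H$. The theorem yields a Mori fiber completion $\pi\:V\to\P^1$ of $X\setminus H\cong\A^{k(n-k)}$ in which every member of $\langle F',dH\rangle$ other than $dH$ appears as a fiber; and since $F'$ is a member of $\caL$ distinct from $dH$, we have $\langle F',dH\rangle=\langle F,dH\rangle=\caL$, so all members of $\langle F,dH\rangle$ other than $dH$, in particular $F$, appear as fibers, which is the assertion. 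I expect the only genuinely delicate point to be the smoothness of $F'$ along $S$: away from $S$ this is the standard Bertini statement, but along $S$ it does not follow from Bertini and must be obtained from the local computation behind Lemma~\ref{lem:pencil-local-smooth} and Corollary~\ref{cor:H-special-members}, which is precisely where the hypotheses $S\subseteq F_\reg$ and (for $d=1$) $S\subseteq\Sing H$ enter.
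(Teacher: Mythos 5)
Your proof is correct and follows essentially the same route as the paper: smoothness of members along $\Bs\caL$ via Lemma~\ref{lem:pencil-local-smooth} (through Corollary~\ref{cor:H-special-members}), Bertini off the base locus, and then Theorem~\ref{thm:FanoHyperPencil-2}. Your explicit step of replacing $F$ by a smooth general member $F'$ of the pencil (and noting $\langle F',dH\rangle=\langle F,dH\rangle$) is exactly what the paper's terser proof does implicitly, so there is no genuine difference in approach.
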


\begin{proof} Since  $S=F\cap H$ is contained in the smooth locus of $F$, every member of $\caL$ other then $dH$ is smooth along $\Bs\caL$ by Lemma \ref{lem:pencil-local-smooth}. Since on the other hand $\Gr(k,n)$ is smooth, it follows from Bertini's theorem that a general member of $\caL$ is smooth away from $\Bs\caL$, hence smooth. Since $S$ is by assumption irreducible and contained in the smooth locus of $F$, the assertion follows from Theorem \ref{thm:FanoHyperPencil-2}. 
\end{proof}

We now deduce Theorem \ref{thm2}, which asserts that given $n\geq 2$ and a hyperplane in $H \subset \P^n$, for every integral hypersurface $F\subseteq \P^n$ of degree $d\leq n$ such that $F\cap H$ is irreducible and contained in the smooth locus $F_{\reg}$ of $F$ there exists a completion of the affine $n$-space $\A^n \cong \P^n \setminus H$ into a Mori fiber completion over $\P^1$ such that all members of the pencil $\langle F,dH\rangle$ other than $dH$ appear as fibers.

\medskip
\begin{proof}[Proof of Theorem \ref{thm2}]
The case $d=1$ is obvious, so we may assume that  $d\geq 2$. We have $\Gr(1,n+1)=\P^n$, so for $n\geq 4$ the result follows from Proposition \ref{prop:completion_from_pencils}. We are thus left with the three cases $(n,d)=(2,2)$, $(3,2)$ and $(3,3)$. The case $(2,2)$ is treated in  Example \ref{ex:conic-pencil}. In the case $(3,2)$, $F\subset \P^3$ is an integral quadric surface such that $F \cap H$ is irreducible and contained in $F_{\reg}$. By Corollary \ref{cor:H-special-members} and by Bertini's theorem, a general member of the pencil $\caL=\langle F,2H\rangle$ is a smooth quadric surface, so the assertion follows from Example \ref{ex:dP_fibr}. In the remaining case $(3,3)$, $F\subset \P^3$ is an integral cubic surface such that $F \cap H$ is irreducible and contained in $F_{\reg}$. Again, by Corollary \ref{cor:H-special-members} and Bertini's theorem, a general member of the pencil $\caL=\langle F, 3H \rangle$ is a smooth cubic surface which intersects $H$ along a smooth elliptic curve, so the result follows from \cite[Theorem. (a)]{DK3}. 
\end{proof}

\begin{example}[Families of Mori fiber completions of $\A^n$] Let $n\geq 4$ and $d\geq 2$ be integers. Put $\P^n=\Proj(\C[x_0,\ldots, x_n])$ and  $H=\{x_0=0\}\subset \P^n$. Let $\C[x_1,\ldots, x_n]_{\leq d}$ denote the affine space of polynomials of total degree at most $d$. Let $\cV_d$ denote its open subset consisting of polynomials $f$ of total degree precisely $d$ for which the scheme-theoretic intersection of $H$ with the closure $F$ in $\P^n$ of the zero locus of $f$ in $\A^n=\Spec(\C[x_1,\ldots, x_n])$ is smooth. So, for every $f\in \cV_d$, $F$ is an irreducible hypersurface of degree $d$ which contains  the smooth variety $F\cap H$ in its smooth locus. Theorem \ref{thm2} thus applies and for each $f\in \cV_d$ gives the existence of a Mori fiber completion $\pi\:X\to \P^1$ of $\A^n=\P^n\setminus H$ such that all members of the pencil $\langle F,dH\rangle$ other than $dH$ appear as fibers, and for which we have a commutative diagram \[\xymatrix{\A^n \ar[d]_{f} \ar[r] & X \ar[d]^{\pi} \\ \A^1 \ar[r] & \P^1}\] where the horizontal morphisms are open immersions. 
\end{example}

We now present two other examples of completions of $\A^4$ constructed respectively from the quintic del Pezzo fourfold, Fano-Mukai fourfolds of genus $10$ and the quintic del Pezzo fivefold. 

\begin{example}[A completion of $\A^4$ into a non-rational Fano threefold fibration] \label{exa:delPezzofourfold-Take 2} As in Example \ref{exa:delPezzofourfold-Take1} above, let $W_5\subset \P^7$ be the quintic del Pezzo fourfold and let $H\subset W_5$ be a singular hyperplane section with a unique ordinary double point $p$ whose complement is isomorphic to $\A^4$. Let $\caL$ be the pencil on $W_5$ generated by $2H$ and a general quadric section $F$.  A general member $Y$ of $\caL$ is a smooth Fano threefold of Picard rank one and index one isomorphic to the intersection of the Grassmannian $\Gr(2,5)$ with two hyperplanes and a quadric (family $B_{10}$ in \cite[Theorem 5.3]{Beau77}). Furthermore, $S=(\Bs\caL)_\redd=H|_Y$ is a smooth anticanonical divisor on $Y$, hence it is a smooth $\mathrm{K}3$ surface (which implies that $Y\setminus S$ is not an affine Fano variety). By Theorem \ref{thm:FanoHyperPencil-2} the pencil $\caL$ gives rise to a Mori fiber completion $\pi\:V\to \P^1$ of $\A^4=W_5 \setminus H$, whose general fibers are isomorphic to the general members of $\caL$. Thus, by \cite[Theorem 5.6(ii)]{Beau77}, general fibers of $\pi$ are non-rational. 
\end{example}

\begin{example}[A completion of $\A^4$ into a genus $10$ Fano threefold fibration]  \label{exa:mukai4-fold}
A Fano-Mukai fourfold of genus $10$ is a smooth Fano fourfold $X$ of Picard rank one, Fano index $i_X=2$, and genus $g:=\frac{1}{2}(H^4)+1=10$, where $H$  is an ample generator of $\Pic X$. By \cite[Remark 13.4, Theorem 1.1]{PZ-genus10_4folds_and_A4} the moduli space of such fourfolds has dimension one and for every such fourfold $X$ there exists an open embedding $\A^4\mono X$ whose complement is a generator $H_\8$ of $\Pic X$ and whose singular locus $T$ is a surface. 

Let $\caL$ be the pencil generated by $H_\8$ and a general smooth member $F$ of the complete linear system $|H_\8|$.  Since the restriction homomorphism $\Pic(X)\to \Pic(F)$ is an isomorphism by Lemma \ref{lem:Grothencieck-Lefschetz}, the base locus $\Bs \caL=F\cap H_\8$ of $\caL$ is an irreducible and reduced surface, which we denote by $S$. The singular locus of $S$ is equal to the curve $C=T \cap S$. Since $S$ is singular but not contained in the singular locus of $H_\infty$, we cannot directly apply Theorem \ref{thm:FanoHyperPencil-2} to obtain from $\caL$ a Mori fiber completion of $\A^4=X\setminus H_\8$ over $\P^1$. Instead, we argue as follows. By Bertini's theorem a general member $\caL_\lambda$ of $\caL$ is smooth off the base locus $S$. Since the scheme-theoretic intersection $\caL_\lambda|_{H_\8}=S$ is smooth off $C=T \cap S$, it follows that $\caL_\lambda$ is smooth off the curve $C$. Since $F$ is smooth at every point $x\in C$ whereas $H_\8$ is singular there, every member of $\caL$ other than $H_\8$ is smooth at $x$ by Lemma \ref{lem:pencil-local-smooth}. We infer that a general member $\caL_\lambda$ is smooth, so the base locus $S$ of $\caL$ is the scheme-theoretic transversal intersection of any two smooth members of $\caL$. The blow-up $\tau\: V \to X$ of $S$ is then a resolution of $\psi_\caL\:X\map \P^1$ and $\psi_\caL \circ \tau\: V \to \P^1$ is a Mori fiber space whose general fibers are Fano threefolds of Picard rank one, index $1$ and genus $10$. They are rational by \cite[Theorem 4.6.7]{Iskovskikh_Prokhorov-Fano_varieties} but are not completions of $\A^3$ by \cite{Fu93}. The  variety $V$ contains $\A^4$ as the complement of the union of the proper transform of $H_\infty$ and of the exceptional locus of $\tau$.
\end{example}

\begin{example}[Mori fiber completions of $\A^5$ from pencils on the quintic del Pezzo fivefold $Z_5$] \label{exa:delPezzofivefold} 
Recall \cite{Fujita81} that the quintic del Pezzo fivefold  $Z_5\subset \P^8$ is the intersection of the Grassmannian $\Gr(2,5) \subset \P^9$ with a general linear hyperplane. In particular, $\Pic(Z_5) \cong \Z\langle H\rangle$, where $H$ is a hyperplane section of $Z_5$. It is known that $Z_5$ is the unique smooth Fano fivefold with Fano index $i_{Z_5}=4$ and $\Pic(Z_5) \cong \Z$  generated by an ample generator $H$ for which $H^5 =5$. Furthermore, it follows for instance from the alternative description of $Z_5$ given in \cite[(7.10)]{Fujita81} that there exists an open embedding of $\A^5$ into $Z_5$ whose complement is a non-normal hyperplane section $H$ of $Z_5$. 

Let $\caL_3$ be a pencil on $Z_5$ generated by $3H$ and a general cubic section $F_3$.  A general member of $\caL_3$ is a smooth Fano fourfold of Picard rank one and Fano index one. By Theorem \ref{thm:FanoHyperPencil-2} the pencil $\caL_3$ gives rise to a Mori fiber completion $\pi_3\:X_3\to \P^1$ of $\A^5=Z_5 \setminus H$, whose general fibers are isomorphic to the general members of $\caL_3$. In a similar way, a pencil $\caL_2$ generated by $2H$ and a general quadric section $F_2$ gives rise to a Mori fiber completion $\pi_2\:X_2\to \P^1$ of $\A^5=Z_5 \setminus H$ whose general fibers are smooth Fano fourfold of Picard rank one and Fano index two. We do not know whether general fibers of these fibrations are rational.

Finally, one can consider a pencil $\caL_1$ generated by $H$ and a general hyperplane section $F_1$ of $Z_5$. The base locus of $\caL_1$ is an irreducible singular threefold $V$ whose singular locus is equal to a hyperplane section of the singular locus of $H$. Arguing as in Example \ref{exa:mukai4-fold}, we see that a general member of $\caL_1$ is smooth, so the base locus $V$ of $\caL_1$ is the scheme-theoretic transverse intersection of any two smooth members of $\caL_1$. The blow-up $\tau\: X_1 \to Z_5$ of $V$ is then a resolution of $\psi_{\caL_1}\:Z_5\map \P^1$ and $\psi_{\caL_1} \circ \tau\: X_1 \to \P^1$ is a Mori fiber space whose general fibers are quintic del Pezzo fourfolds $W_5$. The variety $X_1$ contains $\A^5$ as the complement of the union of the proper transform of $H$ and of the exceptional locus of $\tau$. A general fiber of the restriction of $\psi_{\caL_1} \circ \tau$ to $\A^5$ has a completion into $W_5$ with a smooth hyperplane section of $W_5$ as a boundary, hence by \cite[Theorem 3.1]{Pr93} is not isomorphic to $\A^4$ (even though $W_5$ is a completion of $\A^4$).
\end{example}

\begin{example}[Mori fiber completions of $\A^n$ with super-rigid affine Fano general fibers]\label{ex:affine_super-rigid}
Let $n\geq 4$ and let $\caL$ be a pencil on $\P^{n}$ generated by a general hypersurface of degree $n-1$ and by $(n-1)H$, where $H$ is a hyperplane. By Corollary \ref{cor:affine_Fano}, $\caL$ gives rise to a Mori fiber completion $\pi\:V\to \P^1$ of $\A^n=\P^n\setminus H$  such that general fibers of $\pi|_{\A^n}$ are smooth affine Fano varieties, isomorphic to the complement of a smooth hyperplane section of a hypersurface of degree $n-1$ in $\P^n$. If $n\geq 6$ then it is known that such affine Fano varieties are super-rigid \cite[Theorem 2.8, Example 2.9]{CDP17}. For $n=4,5$ the super-rigidity of  complements of general hyperplane sections of respectively smooth cubic threefolds in $\P^4$ and smooth quartic fourfolds in $\P^5$ is an open problem. 
 \end{example}

\subsection{Pencils on weighted projective spaces}\label{sec:Cone_construction}

An important class of $\Q$-factorial rational Fano varieties consists of weighted projective spaces. We fix notation and summarize some basic facts (see e.g. \cite{Dolg-WPS} and \cite{Fl00} for more). Given a non-decreasing sequence of positive integers $\bar a=(a_0,\ldots,a_n)$, we define an $\N$-grading on $\C[x_0,\ldots,x_n]$ by putting $\deg x_i=a_i$, and we let $\P(\bar a)= \Proj(\C[x_0,\ldots,x_n])$. The inclusion of graded rings $\C[y_0^{a_0},\ldots,y_n^{a_n}]\subseteq \C[y_0,\ldots,y_n]$ leads under the identification $x_i=y_i^{a_i}$ to a finite  morphism 
\begin{equation}\label{eq:pi}
\pi\:\P^n\to\P(\bar a)\cong\P^n/(\Z_{a_0}\times\cdots\times \Z_{a_n}),
\end{equation} where the action is diagonal, by multiplication by an $a_i$-th root of unity on the $i$-th factor, see \cite[\S 1.2.2]{Dolg-WPS}. The variety $\P(\bar a)$ is covered by the affine open subsets $\{x_i\neq 0\}\cong \A^n/\Z_{a_i}$, where the generator $\varepsilon$, a primitive  $a_i$-th root of unity, acts by $$(x_0,\ldots,\widehat{x_i}\ldots,x_n)\mapsto (\varepsilon^{a_0} x_0,\ldots,\widehat{ \varepsilon^{a_i}x_i}\ldots,\varepsilon^{a_n}x_n).$$ In particular, $\P(\bar a)$ is normal and $\Q$-factorial \cite[Lemma 5.16]{KollarMori-Bir_geometry}, with finite quotient singularities. (For a criterion when $\P(\bar a)$ is klt see \cite[Proposition 2.3]{Kasprzyk-terminal_WPS}, cf.\ \cite[Proposition 11.4.12]{CLS11}). Since for every $d>0$ we have $\P(a_0,da_1,\ldots,da_n)\cong \P(a_0,a_1,\ldots, a_n)$, we can assume without loss of generality that $\gcd(a_0,\ldots\widehat{a}_{i},\ldots,a_n)=1$ for every $i\in \{0,1,\ldots,n\}$, in which case one says that the description of the weighted projective space is \emph{well-formed}. The singular locus of a well-formed $\P (\bar a)$ can be described as follows \cite[5.15]{Fl00}: 
\begin{equation}\label{eq:SingP}
[x_0: \ldots :x_n]  \in \Sing \P(\bar a) \iff \gcd \{a_i:x_i\neq 0\}>1.
\end{equation}
By \cite[Proposition 2.3]{Mo75} the class group of  $\P(\bar a)$ is isomorphic to $\Z$ and is generated by the class of the divisorial sheaf  $\cO_{\P(\bar a)}(1)$ on $\P(\bar a)$.  
Furthermore, the sheaf $\cO_{\P(\bar a)}(m)$, where $m$ is the least common multiple of $a_0,\ldots, a_n$, is invertible and its class generates the Picard group of $\P(\bar a)$; see also \cite[Exercise 4.1.5 and 4.2.11]{CLS11}.

Put $\bar x=(x_0,\ldots,x_n)$. After fixing $\bar a$ such that the description of $\P(\bar a)$ is well-formed, we denote by $\C[\bar x]_{(d)}$ the set of weighted homogeneous polynomials of degree $d$ in the variables $\bar x$ with respect to the weights $\bar a$. Let $f(\bar x)\in \C[\bar x]_{(d)}$. A zero scheme $Y=Z(f(\bar x))$ has, by definition, \emph{degree} $d$. We say that $Y$ is \emph{quasi-smooth} if its affine cone $$\cC(Y)=\Spec(\C[\bar x]/(f(\bar x)))\subseteq \A^{n+1}=\Spec(\C[\bar x])$$ is smooth off the origin, equivalently, if $\{\pi^*f=0\}$ is a smooth subvariety of $\P^n$, see \eqref{eq:pi}. The singularities of a quasi-smooth subvariety are finite quotient singularities, hence, in particular klt by \cite[Theorem 7.4.9]{Ishii-Intro_to_singularities}. 

\medskip
From now on we assume that $n\geq 4$ and that $a_0=1$. We put 
\begin{equation}\label{eq:P_H}
\P=\P(1,a_1,\ldots,a_n) \quad \text{and}\quad H=\{ x_0=0 \} \cong \P (a_1, \cdots , a_n).
\end{equation}
Then $ \mathcal{O}_\P(H)\cong \mathcal{O}_\P(1)$ generates $\Cl(\P)$ and $\P\setminus H$ is isomorphic to the affine $n$-space $\A^n$ with inhomogeneous coordinates $x_i/x_0^{a_i}$, where $i=1,\ldots,n$. Furthermore, since $K_\P\sim -\sum_{i=0}^n\{x_i=0\}$ (see e.g.\ \cite[\S 2.1]{Dolg-WPS}), the Fano index $i_\P$ of $\P$ is equal to $1+\sum_{i=1}^{n} a_i$. The induced description of $H$ as $\P(a_1,\ldots,a_n)$ is not necessarily well-formed (take for instance $\bar a=(1,1,d,\ldots,d)$), but if it is, then by \eqref{eq:SingP} we have $\Sing\P=\Sing H$, so in this case the singular locus of $\P$ has codimension at least $3$.

\medskip
Theorem \ref{thm1} is a consequence of the combination of the following result with Corollary \ref{cor:MFS-Completion}.

\begin{prop}[Mori fiber completions from pencils on $\P(1,\bar a)$]\label{prop:ConePencilGoodres}
Let $\P$ and $H$ be as in \eqref{eq:P_H}. Assume that $\P$ is smooth in codimension $2$ (equivalently, the induced description of $H$ is well-formed) and let $F\subseteq \P$ be a quasi-smooth terminal hypersurface of degree $d\in \{2,\ldots,\sum_{i=1}^na_i\}$. Then the pencil $\caL$ generated by $F$ and $dH$  is a terminal rank one Fano pencil with quasi-smooth general members and the associated rational map $\psi_\caL\:\P\map \P^1$ admits a compatible thrifty resolution with discrepancy locus contained in $\{(\psi_\caL)_*H\}$.
\end{prop}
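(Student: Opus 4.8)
The plan is to verify all the hypotheses of Theorem \ref{thm:FanoHyperPencil-2} for the pair $(\P, H)$ with the given $d$ and $F$, which by construction produces the Mori fiber completion together with a compatible thrifty resolution of the required type. First I would record the ambient data: by \eqref{eq:pi} and the discussion around \eqref{eq:P_H}, $\P = \P(1,a_1,\ldots,a_n)$ is a $\Q$-factorial Fano variety with $\Cl(\P) \cong \Z\langle H\rangle$, it has finite quotient (hence klt, hence rational) singularities, its Fano index is $i_\P = 1 + \sum_{i=1}^n a_i$, and under the well-formedness assumption $\Sing \P = \Sing H$ has codimension at least $3$, so $\P$ is smooth in codimension $2$. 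Since $d \le \sum_{i=1}^n a_i = i_\P - 1$, the numerical constraint $d \in \{1,\ldots,i_\P-1\}$ of Theorem \ref{thm:FanoHyperPencil-2} is met. The complement $\P \setminus H \cong \A^n$ is smooth, in particular $\Q$-factorial and terminal, so that hypothesis is automatic.

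Next I would check the four bulleted conditions of Theorem \ref{thm:FanoHyperPencil-2} for the divisor $F$. Condition (a): $F$ is quasi-smooth, so its singularities are finite quotient singularities; I must upgrade this to terminal, which is exactly the hypothesis $F$ terminal that we assume. Condition (b): I need $F \cap H$ irreducible and contained in $F_\reg$. Here $F \cap H = \{x_0 = 0\} \cap F$ is a quasi-smooth hypersurface of degree $d$ inside $H = \P(a_1,\ldots,a_n)$; since the description of $H$ is well-formed, this is a quasi-smooth subvariety whose affine cone is smooth off the origin, and by the inverse function theorem / Bertini-type argument on the affine cone it is contained in the quasi-smooth (hence, by restricting the Jacobian, smooth) locus of $F$ away from $\Sing\P$ — and $F$ being quasi-smooth of degree $d < i_\P$ one also checks, as in the proof of Lemma \ref{lem:fat-member-take0} and Corollary \ref{prop:FanoHyperPencil-base-Take2}, that $F$ is normal so $F_\reg$ makes sense; irreducibility of $F\cap H$ follows because $F \sim dH$ is ample and Cartier in codimension $2$ and one applies the connectedness/irreducibility input coming from the Grothendieck–Lefschetz circle of results (Lemma \ref{lem:Grothencieck-Lefschetz}, Corollary \ref{prop:FanoHyperPencil-base-Take2}). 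Condition (c): $\P \setminus (F\cap H) \supseteq \P \setminus H \cong \A^n$ together with $\P\setminus\{x_0=0\,\text{or}\,f=0\}$ — more precisely $\P\setminus (F\cap H)$ is covered by the affine cells $\{x_i\neq 0\}$ minus a hypersurface, each of which is a hypersurface complement in a quotient $\A^n/\Z_{a_i}$ and hence a local complete intersection; I would spell this out using that $\P$ itself is a local complete intersection away from codimension $\geq 2$ isn't needed — rather the cone construction shows $\{x_i \neq 0\} \cong \A^n/\Z_{a_i}$ has rational (klt) singularities and is a hypersurface ring, so local complete intersection follows. Condition (d), relevant only when $d = 1$: then $F \cap H$ must be smooth or supported in $\Sing H$; but in the statement of Proposition \ref{prop:ConePencilGoodres} we have assumed $d \geq 2$, so condition (d) is vacuous.

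Having verified (a)–(d) together with the standing hypotheses (rational singularities, $\Cl(\P)=\Z\langle H\rangle$, $\P\setminus H$ terminal, $d\le i_\P-1$), Theorem \ref{thm:FanoHyperPencil-2} gives that $\caL = \langle F, dH\rangle$ is an $H$-special pencil, that $\caL$ is a terminal rank one Fano pencil, that every member other than $dH$ is non-degenerate and smooth in a neighborhood of $\Bs\caL$ — in particular, since $F$ is quasi-smooth and a general member is a small perturbation of it off $\Bs\caL$, Bertini on the affine cone shows a general member is quasi-smooth — and that the degeneracy locus $\delta(\caL)$ is contained in $\{(\psi_\caL)_*H\}$. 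The proof of that theorem also shows property $(\mathbf{TQ}_\delta)$ holds for $\delta = \{(\psi_\caL)_*H\}$ and that, because $X_\delta = \P\setminus H$ is $\Q$-factorial terminal, Corollary \ref{prop:controled-discrepancy} forces every thrifty $\Q$-factorial terminal resolution of $\caL$ to have discrepancy locus inside $\delta$; since such a resolution exists (e.g.\ take a $\Q$-factorial terminalization of the normalized graph $\widetilde\Gamma$, using Lemma \ref{lem:terminalization}), it is automatically compatible. This is precisely the conclusion of Proposition \ref{prop:ConePencilGoodres}.

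The main obstacle I anticipate is not any single grand step but the careful bookkeeping in verifying condition (b): one must argue that the quasi-smooth divisor $F$ is actually normal and that $F\cap H$, a priori only quasi-smooth inside the (possibly singular) $H$, lies in the locus where $F$ is smooth as a scheme — this requires combining the Jacobian criterion on the affine cone $\cC(F)\subseteq \A^{n+1}$ with the fact that $\Sing\P = \Sing H$ has codimension $\geq 3$, so that near the codimension-$2$-in-$H$ locus $F\cap H$ both $\P$ and $F$ are smooth — and in establishing irreducibility of $F\cap H$, which needs the Grothendieck–Lefschetz/Serre-vanishing machinery of Corollary \ref{prop:FanoHyperPencil-base-Take2} applied to the ample Cartier divisor $dH$ on $\P$, using $n\ge 4$ so that the relevant $H^i(H,\cO(-\ell d H)|_H)$ vanish for $i=1,2$.
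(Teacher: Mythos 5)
Your plan is to reduce the proposition to Theorem \ref{thm:FanoHyperPencil-2}, but that theorem does not apply in the weighted setting, and this is exactly why the paper proves Proposition \ref{prop:ConePencilGoodres} by a separate, direct argument rather than quoting it. Three of its hypotheses break down. First, Theorem \ref{thm:FanoHyperPencil-2} (via Corollary \ref{prop:FanoHyperPencil-base-Take2} and Lemma \ref{lem:Grothencieck-Lefschetz}) requires $F\sim dH$ to be a \emph{Cartier} divisor; the Grothendieck--Lefschetz argument uses invertibility of $\cO_X(Y)$ essentially. On $\P(1,a_1,\ldots,a_n)$ a quasi-smooth hypersurface of degree $d$ is in general only $\Q$-Cartier (this is the typical situation in the intended applications, e.g.\ the $95$ families), so your appeal to Lemma \ref{lem:Grothencieck-Lefschetz} and Corollary \ref{prop:FanoHyperPencil-base-Take2} for irreducibility of $F\cap H$ and for Picard rank one of the members is not available. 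Second, condition (b) of the theorem, $F\cap H\subseteq F_\reg$, fails outright whenever $F$ is actually singular: since $\Sing F\subseteq\Sing\P=\Sing H\subseteq H$, every singular point of $F$ lies on $F\cap H$, so no amount of codimension bookkeeping (your last paragraph) can place $F\cap H$ inside the smooth locus; quasi-smooth $\neq$ smooth at the quotient singularities of $\P$. Third, condition (c) fails as well: $\P\setminus(F\cap H)$ contains quotient singularities of $\P$, and your claim that $\A^n/\Z_{a_i}$ ``is a hypersurface ring, so local complete intersection follows'' is false — already $\A^2/\Z_3$ acting by scalars (the cone over the twisted cubic) is not a complete intersection. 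So the proposed reduction is not merely incomplete; in the cases the proposition is designed for, the hypotheses you are trying to verify are simply not true.

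For comparison, the paper's proof replaces each of these inputs by tools valid for quasi-smooth weighted hypersurfaces. Irreducibility of $S=F\cap H$ and the rank-one statement for the (non-Cartier) members come from the Lefschetz hyperplane theorem for weighted projective spaces (Mori, and Okada's remark), giving $\Cl(F)\cong\Z\langle H|_F\rangle$ and $\Cl(Y)\cong\Z$ for all members $Y\neq dH$. Terminality of the pencil, primality and Fano-ness of members, and quasi-smoothness of general members are obtained from Lemmas \ref{lem:Q-term-crit}, \ref{lem:integral-members}, \ref{lem:fat-member-take0} and \ref{lem:pencil-local-smooth}, the last two applied on the affine cone $\{f+tx_0^d=0\}$ together with Bertini. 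Finally, compatibility of thrifty resolutions is not routed through condition $(\mathbf{TQ}_\delta)$ (which would require $\Q$-factoriality and smoothness in codimension two of members near $\Bs\caL$, not available here), but by proving directly that $\Gamma\setminus H'$ is $\Q$-factorial terminal: normality and terminality near the exceptional divisor $E$ come from Proposition \ref{lem:neighborhood-control}(a),(b) using only that members are klt near $S$, $\Q$-factoriality comes from $\Gamma\setminus(H'\cup E)\cong\A^n$ plus irreducibility of $E$, and then Lemma \ref{lem:terminalization} yields existence and compatibility. If you want to keep your strategy, you would in effect have to reprove Theorem \ref{thm:FanoHyperPencil-2} with ``Cartier'' and ``local complete intersection'' replaced by these weighted-Lefschetz and cone arguments — which is precisely what the paper's direct proof does.
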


\begin{proof}
Let $f\in \C[\bar x]_{(d)}$ be the irreducible weighted homogeneous polynomial defining the hypersurface $F$. The base locus $\Bs \caL$ is the codimension $2$ weighted complete intersection of $\P$ with weighted homogeneous ideal $(f(\bar x),x_0^d)$. The graph $\Gamma$ of $\psi_\caL$ is isomorphic to the hypersurface in $\P\times \P^1_{[u_0:u_1]}$ defined by the bi-homogeneous equation $f(\bar x)u_1+x_0^du_0=0$ and the projection $\pr_{\P}$ induces isomorphisms between closed fibers of the restriction to $\Gamma$ of the projection $\pp=(\pr_{\P^1})|_\Gamma$ and members of $\caL$. 

Since $F$ is terminal, $\caL$ is a terminal pencil by Lemma \ref{lem:Q-term-crit}. By the Lefschetz hyperplane section theorem for weighted projective spaces \cite[Theorem 3.7]{Mo75} and \cite[Remark 4.2]{Okada-stable_rationality_of_Fano_3fold_hypersurfaces}, the group  $\Cl(F)$ is isomorphic to $\Z$. It is generated by the restriction $H|_F$ of $H$ to $F$ as a Weil divisor, for which $\cO_F(H|_F)\cong (\cO_\P(1)|_F)^{\vee\vee}$. This implies in particular that $F\cap H =S$ is irreducible, hence that the exceptional locus $E\cong S\times \P^1$ of $\pr_{\P}\:\Gamma\to \P$ is a prime divisor on $\Gamma$. Since $\P$ is smooth in codimension two, every Weil divisor on it is Cartier in codimension two in $\P$. By assumption $F$ is terminal, so it is smooth in codimension $2$, hence smooth and Cartier at general points of $S$. We have $d\geq 2$, so every member of $\caL$ other than $dH$ is prime by Lemma \ref{lem:integral-members}, and since $2\leq d<i_\P$, every normal member of $\caL$ is Fano by Lemma \ref{lem:fat-member-take0}. Since members of $\caL$ are not necessarily Cartier but only $\Q$-Cartier, the fact that all of them have class group isomorphic to $\Z$, hence have Picard rank one, follows again from the Lefschetz hyperplane section theorem for weighted projective spaces.
Thus, $\caL$ is a terminal rank one Fano pencil with degeneracy locus $\delta(\caL)=\{(\psi_\caL)_*H\}$. 

The affine cone  $\cC(Y)$ over a member $Y$ of $\caL$ other than $dH$ is isomorphic to the hypersurface in $\A^{n+1}$ defined by the equation $f(\bar x)+tx_0^d=0$ for some $t\in \C$. Since $\cC(F)$ is smooth off the origin $\{O\}$, it follows from Bertini's theorem that a general $\cC(Y)$ is smooth outside its intersection with  $\cC(H)=\{x_0=0\}$. 

Furthermore,  since $d\geq 2$, it follows from Lemma \ref{lem:pencil-local-smooth} that $\cC(Y)\setminus \{O\}$ is smooth in a neighborhood of $(\cC(Y)\cap \{x_0=0\})\setminus \{ O \}$. A general member $Y$ is thus quasi-smooth and every member $Y$ other than $dH$ is klt in a neighborhood of $S$.

In view of Lemma \ref{lem:terminalization}, every thrifty $\Q$-factorial terminal resolution of $\caL$ is compatible, provided that the open subset $\Gamma_\infty=\Gamma\setminus H'$ of $\Gamma$, where $H'$ is the proper transform of $H$, is terminal and $\Q$-factorial. Thus it remains to show that $\Gamma_\infty$ is $\Q$-factorial terminal. Every member $Y$ of $\caL$ other than $dH$ is klt in a neighborhood of $S$, so by Proposition \ref{lem:neighborhood-control}(a), $\Gamma_\infty$ is normal in a neighborhood of $E_\infty=E\cap \Gamma_\infty$. We have  $\Gamma_\infty \setminus E_\infty \cong \P \setminus H \cong \A^n$, so $\Gamma_\infty$ is normal and its class group is generated by irreducible components of $E_\infty$. Since $E_\infty$ is irreducible and $\Q$-Cartier, we conclude that $\Gamma_\infty$ is normal and $\Q$-factorial. The pencil $\caL$ is terminal and its members other than $dH$ are klt in a neighborhood of $S$. By Proposition \ref{lem:neighborhood-control}(b), $\Gamma_\infty$  is terminal in a neighborhood of $E_\infty$, hence it is terminal. 
\end{proof}

As a corollary we obtain the following result (cf.\ Definition \ref{dfn:MFS}):

\begin{cor}[Mori fiber completions of $\A^4$ with birationally rigid fibers]\label{cor:95families_bir_rigid_fibers} 
There exist at least $95$ pairwise non weakly square birationally equivalent Mori fiber completions of $\A^4$ over $\P^1$ with quasi-smooth terminal birationally rigid general fibers. 
\end{cor}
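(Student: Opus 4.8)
The plan is to apply Theorem \ref{thm1} to each of the $95$ families of Fletcher and Reid \cite{Fl00,Reid-canonical_3folds} and then to separate the resulting Mori fibre spaces using the birational rigidity of their general fibres established in \cite{CP16}.

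Fix one of the $95$ families, consisting of general anticanonically embedded quasi-smooth Fano threefold hypersurfaces $X_d\subseteq\P=\P(1,a_1,a_2,a_3,a_4)$. By construction $\P$ is well-formed (hence $\gcd(a_1,a_2,a_3,a_4)=1$), $d=a_1+a_2+a_3+a_4$, a general member $X_d$ is quasi-smooth with at worst terminal cyclic quotient singularities, and $X_d$ is well-formed as a subvariety, i.e.\ $\codim_{X_d}(X_d\cap\Sing\P)\geq 2$. I first note that then the description of $H=\P(a_1,a_2,a_3,a_4)$ is automatically well-formed: if $e:=\gcd(a_1,a_2,a_3)>1$ (the other triples being symmetric), then by \eqref{eq:SingP} the surface $\{x_0=x_4=0\}\cong\P(a_1,a_2,a_3)$, of codimension $2$ in $\P$, lies in $\Sing\P$; since $\dim X_d=3$, well-formedness of $X_d$ as a subvariety forces $X_d$ not to contain this surface, so its defining polynomial $f$ contains a monomial in $x_1,x_2,x_3$ of weighted degree $d$, whence $e\mid d$ and, as $e\mid a_1+a_2+a_3$, also $e\mid a_4$, contradicting $\gcd(a_1,a_2,a_3,a_4)=1$. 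Thus $\Sing\P$ has codimension $\geq 3$. Now take $F\subseteq\P$ to be a general quasi-smooth member of the family; then $F$ is terminal, $F\neq H$ because $\deg F=d\geq 4>1$, and $d\leq a_1+a_2+a_3+a_4$, so Theorem \ref{thm1} (with $n=4$) produces a Mori fibre completion $\pi\:V\to\P^1$ of $\A^4\cong\P\setminus H$ in which every member of the pencil $\caL=\langle F,dH\rangle$ other than $dH$ occurs as a fibre. By the proof of Proposition \ref{prop:ConePencilGoodres} a general member of $\caL$ is again quasi-smooth and terminal of degree $d$ in $\P$, hence a member of the same family (for $F$ general it is in fact a general member of $|\cO_\P(d)|$), hence birationally rigid by \cite[Main Theorem]{CP16} (see also \cite{CPR00}). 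Running this over all $95$ families gives Mori fibre completions $\pi_i\:V_i\to\P^1$ of $\A^4$, $i=1,\ldots,95$, whose general fibre $Y_i$ is a quasi-smooth, terminal, birationally rigid Fano threefold lying in the $i$-th family.

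It remains to see that the $\pi_i$ are pairwise non weakly square birationally equivalent. Suppose $\varphi\:V_i\map V_j$ and $\varphi'\:\P^1\map\P^1$ satisfy $\pi_j\circ\varphi=\varphi'\circ\pi_i$. As $\P^1$ is a smooth projective curve, $\varphi'$ is a finite surjective morphism, so it carries a general point to a general point; moreover $\varphi$ restricts to an isomorphism of dense open subsets $U_i\subseteq V_i$ and $U_j\subseteq V_j$, and for general $t\in\P^1$ the set $U_i\cap\pi_i^{-1}(t)$ is dense in the irreducible threefold $\pi_i^{-1}(t)$. The relation $\pi_j\circ\varphi=\varphi'\circ\pi_i$ then shows that $\varphi$ maps $U_i\cap\pi_i^{-1}(t)$ isomorphically onto a dense locally closed subset of the irreducible threefold $\pi_j^{-1}(\varphi'(t))$, so $Y_i$ is birational to $Y_j$. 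Since $Y_i$ is birationally rigid and $Y_j$, being a $\Q$-factorial terminal Fano threefold of Picard rank one, is a Mori fibre space over a point, the birational map $Y_i\map Y_j$ forces $Y_j\cong Y_i$ by Definition \ref{def:rigid}. But the general members of distinct families among the $95$ are pairwise non-isomorphic: their anticanonical rings are $\C[x_0,\ldots,x_4]/(f)$ with Hilbert series $(1-t^{d})/\prod_{k=0}^{4}(1-t^{a_k})$ (here $a_0=1$), and this is exactly the data classifying the families in \cite{Fl00}. Hence $i=j$, and in particular there are at least $95$ pairwise inequivalent such completions.

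The geometric content is supplied entirely by Theorem \ref{thm1} and by \cite{CP16}, so the argument is mostly assembly. The point that most deserves care is confirming that the hypotheses of Theorem \ref{thm1} genuinely hold for all $95$ families — concretely that $\P(1,a_1,a_2,a_3,a_4)$ is smooth in codimension $2$ (equivalently $H=\P(a_1,a_2,a_3,a_4)$ is well-formed), which is handled by the short argument above with \eqref{eq:SingP}, and that a general member is quasi-smooth and terminal of degree exactly $a_1+a_2+a_3+a_4$, which is the defining property of the list. A secondary, routine point is the verification that a weakly square birational equivalence of Mori fibre spaces over $\P^1$ descends to a birational equivalence of general fibres.
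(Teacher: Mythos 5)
Your proposal is correct and takes essentially the same route as the paper: apply Theorem \ref{thm1} to general members of the 95 Fletcher--Reid families, invoke birational rigidity of quasi-smooth members from \cite{CP16}, and rule out weak square birational equivalence by restricting to general fibers and using rigidity together with the fact that distinct families have non-isomorphic general members. The only cosmetic differences are that you verify well-formedness of $H$ by a short gcd argument instead of checking the list, and you distinguish the families via the Hilbert series of the anticanonical ring rather than via $(-K)^3$ and the singularities as in the paper's proof.
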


\begin{proof} Let $\bar a_j=(a_1^{(j)},a_2^{(j)},a_3^{(j)},a_4^{(j)})$ for $j=1,2$ be two distinct sequences in the list of 95 non-decreasing sequences of \cite[\S 13.3, Lemma 16.4 and \S 16.6]{Fl00}. Put $\P_j =\P(1,\bar a_j)$ and let $d_j=\sum_{i=1}^4 a_i^{(j)}$. Looking at the list one checks that the hyperplane $H_j=\P(\bar a_j)\subset \P_j$ is a well-formed weighted projective space. Let $F_j\subseteq \P_j$ be a general hypersurface of degree $d_j$. By construction $F_j$ is a quasi-smooth and terminal Fano variety of index $1$ anticanonically embedded into $\P_j$. The intersection of $F_j$ with $H_j$ generates the class group of $F_j$, so it is irreducible. General members of the pencil $\caL_j$ on $\P_j$ generated by $F_j$ and $dH_j$ are then quasi-smooth terminal hypersurfaces of $\P_j$ of degree $d_j$, too. By \cite[Theorem 1.1.10]{CP16} they are all birationally rigid.  

By Theorem \ref{thm1} general members of the pencil $\caL_j$ on $\P_j$ are realized as general fibers of a Mori fiber space $p_j\: V_j \to \P^1$, which contains $\A^4\cong \P_j\setminus H_j$. Assume that two Mori fiber spaces for $j=1,2$ are weakly square birational equivalent. Then there exists a birational map $\chi\: V_1 \map V_2$ and an isomorphism $\varphi\: \P^1 \to \P^1$ of the base curve such that $p_2\circ \chi = \varphi \circ p_1$. Then for a general point $t\in \P^1$, $\chi$ induces a birational map between $(V_1)_t=p_1^{-1}(t)$ and $(V_2)_{\varphi(t)}=p_2^{-1}(\varphi(t))$. Since for a general $t$ these threefolds are isomorphic to general members of $\caL_1$ and $\caL_2$ respectively, which are birationally rigid, it follows that general members of $\caL_1$ are isomorphic to general members of $\caL_2$. In particular, the self-intersections of their respective anti-canonical divisors are equal and their singularities are the same. By \cite[\S 16.6]{Fl00} this implies that $\bar a_1=\bar a_2$.
\end{proof} 

Finally, the following example gives a proof of Corollary \ref{cor:completions_of_A4_Okada}.
 
\begin{example}\label{okada} 
Let $n=4$ and let the quadruple $\bar a=(a_1, a_2, a_3,a_4)$ be one of those with numbers:
\[{\rm No.} \,  97-102, 107-110, 116, 117\]
in \cite[Table 1]{Okada-stable_rationality_of_Fano_3fold_hypersurfaces}. Then a very general quasi-smooth hypersurface $F \subset \P(1,\bar a)$ of degree $d=\sum_{j=1}^4 a_j - \alpha$ is a $\Q$-factorial terminal Fano threefold that is not stably rational if $\alpha =3$ for No.\ $107 - 110$ or if $\alpha =5$ for No. 116, 117 or if $\alpha =2$ otherwise. Fixing any such quadruple and choosing a very general hypersurface $F$ of indicated degree $d$ we obtain by Theorem \ref{thm1} a Mori fiber completion of $\A^4$, $\pi\: V \to \P^1$, whose fibers are isomorphic to the members of the pencil $\langle F, dH\rangle$ other than $dH$, hence whose very general fibers are not stably rational. 
\end{example} 

\bibliographystyle{amsalpha} 
\bibliography{bibl_DKP.bib}

\providecommand{\bysame}{\leavevmode\hbox to3em{\hrulefill}\thinspace}
\providecommand{\MR}{\relax\ifhmode\unskip\space\fi MR }
% \MRhref is called by the amsart/book/proc definition of \MR.
\providecommand{\MRhref}[2]{%
  \href{http://www.ams.org/mathscinet-getitem?mr=#1}{#2}
}
\providecommand{\href}[2]{#2}
\begin{thebibliography}{BCHM10}

\bibitem[BCHM10]{BCHM}
Caucher Birkar, Paolo Cascini, Christopher~D. Hacon, and James McKernan,
  \emph{Existence of minimal models for varieties of log general type}, J.
  Amer. Math. Soc. \textbf{23} (2010), no.~2, 405--468.

\bibitem[Bea77]{Beau77}
Arnaud Beauville, \emph{Vari\'{e}t\'{e}s de {P}rym et jacobiennes
  interm\'{e}diaires}, Ann. Sci. \'{E}cole Norm. Sup. (4) \textbf{10} (1977),
  no.~3, 309--391.

\bibitem[BS13]{Schwede-reflexive}
Manuel Blickle and Karl Schwede, \emph{{$p^{-1}$}-linear maps in algebra and
  geometry}, Commutative algebra, Springer, New York, 2013, pp.~123--205.

\bibitem[CDP18]{CDP17}
Ivan Cheltsov, Adrien Dubouloz, and Jihun Park, \emph{Super-rigid affine {F}ano
  varieties}, Compos. Math. \textbf{154} (2018), no.~11, 2462--2484.

\bibitem[CFST16]{CFST-Fano_fibers}
Giulio Codogni, Andrea Fanelli, Roberto Svaldi, and Luca Tasin, \emph{Fano
  varieties in {M}ori fibre spaces}, Int. Math. Res. Not. IMRN (2016), no.~7,
  2026--2067.

\bibitem[CFST18]{CFST-Fano_fibers2}
\bysame, \emph{A note on the fibres of {M}ori fibre spaces}, Eur. J. Math.
  \textbf{4} (2018), no.~3, 859--878.

\bibitem[CG72]{CM72}
C.~Herbert Clemens and Phillip~A. Griffiths, \emph{The intermediate {J}acobian
  of the cubic threefold}, Ann. of Math. (2) \textbf{95} (1972), 281--356.

\bibitem[Che05]{Cheltsov-rigid_Fano}
I.~A. Cheltsov, \emph{Birationally rigid {F}ano varieties}, Uspekhi Mat. Nauk
  \textbf{60} (2005), no.~5(365), 71--160.

\bibitem[CLS11]{CLS11}
David~A. Cox, John~B. Little, and Henry~K. Schenck, \emph{Toric varieties},
  Graduate Studies in Mathematics, vol. 124, American Mathematical Society,
  Providence, RI, 2011.

\bibitem[Cor00]{Corti-Sing_of_lin_sys}
Alessio Corti, \emph{Singularities of linear systems and {$3$}-fold birational
  geometry}, Explicit birational geometry of 3-folds, London Math. Soc. Lecture
  Note Ser., vol. 281, Cambridge Univ. Press, Cambridge, 2000, pp.~259--312.

\bibitem[CP17]{CP16}
Ivan Cheltsov and Jihun Park, \emph{Birationally rigid {F}ano threefold
  hypersurfaces}, Mem. Amer. Math. Soc. \textbf{246} (2017), no.~1167, v+117.

\bibitem[CPR00]{CPR00}
Alessio Corti, Aleksandr Pukhlikov, and Miles Reid, \emph{Fano {$3$}-fold
  hypersurfaces}, Explicit birational geometry of 3-folds, London Math. Soc.
  Lecture Note Ser., vol. 281, Cambridge Univ. Press, Cambridge, 2000,
  pp.~175--258.

\bibitem[dF13]{deF13}
Tommaso de~Fernex, \emph{Birationally rigid hypersurfaces}, Invent. Math.
  \textbf{192} (2013), no.~3, 533--566.

\bibitem[DK17]{DK3}
Adrien Dubouloz and Takashi Kishimoto, \emph{Explicit biregular/birational
  geometry of affine threefolds: completions of {$\Bbb A^3$} into del {P}ezzo
  fibrations and {M}ori conic bundles}, Algebraic varieties and automorphism
  groups, Adv. Stud. Pure Math., vol.~75, Math. Soc. Japan, Tokyo, 2017,
  pp.~49--71.

\bibitem[DK18]{DK4}
\bysame, \emph{Cylinders in del {P}ezzo fibrations}, Israel J. Math.
  \textbf{225} (2018), no.~2, 797--815.

\bibitem[Dol82]{Dolg-WPS}
Igor Dolgachev, \emph{Weighted projective varieties}, Group actions and vector
  fields ({V}ancouver, {B}.{C}., 1981), Lecture Notes in Math., vol. 956,
  Springer, Berlin, 1982, pp.~34--71.

\bibitem[Dol12]{Dolgachev-Classical_AG_modern_view}
Igor~V. Dolgachev, \emph{Classical algebraic geometry}, Cambridge University
  Press, Cambridge, 2012, A modern view.

\bibitem[FA92]{FA}
\emph{Flips and abundance for algebraic threefolds}, Soci\'et\'e Math\'ematique
  de France, Paris, 1992, Papers from the Second Summer Seminar on Algebraic
  Geometry held at the University of Utah, Salt Lake City, Utah, August 1991,
  Ast{\'e}risque No. 211 (1992).

\bibitem[Fuj81]{Fujita81}
Takao Fujita, \emph{On the structure of polarized manifolds with total
  deficiency one. {II}}, J. Math. Soc. Japan \textbf{33} (1981), no.~3,
  415--434.

\bibitem[Fuk19]{Fukuoka19}
Takeru Fukuoka, \emph{Refinement of the classification of weak {F}ano
  threefolds with sextic del {P}ezzo fibrations}, \arxiv{1903.06872}, 2019.

\bibitem[Ful98]{Fulton-Intersection_theory}
William Fulton, \emph{Intersection theory}, second ed., Ergebnisse der
  Mathematik und ihrer Grenzgebiete. 3. Folge. A Series of Modern Surveys in
  Mathematics [Results in Mathematics and Related Areas. 3rd Series. A Series
  of Modern Surveys in Mathematics], vol.~2, Springer-Verlag, Berlin, 1998.

\bibitem[Fur86]{Fu86}
Mikio Furushima, \emph{Singular del {P}ezzo surfaces and analytic
  compactifications of {$3$}-dimensional complex affine space {${\bf C}^3$}},
  Nagoya Math. J. \textbf{104} (1986), 1--28.

\bibitem[Fur90]{H3b}
\bysame, \emph{Complex analytic compactifications of {${\bf C}^3$}}, vol.~76,
  1990, Algebraic geometry (Berlin, 1988), pp.~163--196.

\bibitem[Fur93]{Fu93}
\bysame, \emph{The complete classification of compactifications of {${\bf
  C}^3$} which are projective manifolds with the second {B}etti number one},
  Math. Ann. \textbf{297} (1993), no.~4, 627--662.

\bibitem[Gro61]{EGAIII-1}
A.~Grothendieck, \emph{\'{E}l\'{e}ments de g\'{e}om\'{e}trie alg\'{e}brique.
  {III}. \'{E}tude cohomologique des faisceaux coh\'{e}rents. {I}}, Inst.
  Hautes \'{E}tudes Sci. Publ. Math. (1961), no.~11, 167.

\bibitem[Gro65]{EGAIV-2}
\bysame, \emph{\'{E}l\'{e}ments de g\'{e}om\'{e}trie alg\'{e}brique. {IV}.
  \'{E}tude locale des sch\'{e}mas et des morphismes de sch\'{e}mas. {II}},
  Inst. Hautes \'{E}tudes Sci. Publ. Math. (1965), no.~24, 231.

\bibitem[Gro67]{EGAIV-4}
\bysame, \emph{\'{E}l\'{e}ments de g\'{e}om\'{e}trie alg\'{e}brique. {IV}.
  \'{E}tude locale des sch\'{e}mas et des morphismes de sch\'{e}mas {IV}},
  Inst. Hautes \'{E}tudes Sci. Publ. Math. (1967), no.~32, 361.

\bibitem[Gro68]{SGA2}
Alexander Grothendieck, \emph{Cohomologie locale des faisceaux coh\'{e}rents et
  th\'{e}or\`emes de {L}efschetz locaux et globaux {$(SGA$} {$2)$}},
  North-Holland Publishing Co., Amsterdam; Masson \& Cie, \'{E}diteur, Paris,
  1968, Augment\'{e} d'un expos\'{e} par Mich\`ele Raynaud, S\'{e}minaire de
  G\'{e}om\'{e}trie Alg\'{e}brique du Bois-Marie, 1962, Advanced Studies in
  Pure Mathematics, Vol. 2.

\bibitem[Har77]{Hartshorne}
Robin Hartshorne, \emph{Algebraic geometry}, Springer-Verlag, New
  York-Heidelberg, 1977, Graduate Texts in Mathematics, No. 52.

\bibitem[Hir54]{Hirzebruch_problems}
Friedrich Hirzebruch, \emph{Some problems on differentiable and complex
  manifolds}, Ann. of Math. (2) \textbf{60} (1954), 213--236.

\bibitem[IF00]{Fl00}
A.~R. Iano-Fletcher, \emph{Working with weighted complete intersections},
  Explicit birational geometry of 3-folds, London Math. Soc. Lecture Note Ser.,
  vol. 281, Cambridge Univ. Press, Cambridge, 2000, pp.~101--173.

\bibitem[IM71]{IsMa71}
V.~A. Iskovskih and Ju.~I. Manin, \emph{Three-dimensional quartics and
  counterexamples to the {L}\"{u}roth problem}, Mat. Sb. (N.S.)
  \textbf{86(128)} (1971), 140--166.

\bibitem[IP99]{Iskovskikh_Prokhorov-Fano_varieties}
V.~A. Iskovskikh and Yu.~G. Prokhorov, \emph{Fano varieties}, Algebraic
  geometry, {V}, Encyclopaedia Math. Sci., vol.~47, Springer, Berlin, 1999,
  pp.~1--247.

\bibitem[Ish18]{Ishii-Intro_to_singularities}
Shihoko Ishii, \emph{Introduction to singularities}, Springer, Tokyo, 2018,
  Second edition.

\bibitem[Kas13]{Kasprzyk-terminal_WPS}
Alexander Kasprzyk, \emph{Classifying terminal weighted projective space},
  \arxiv{1304.3029}, 2013.

\bibitem[Kaw88]{Kawamata-Crepant-Bl-3d}
Yujiro Kawamata, \emph{Crepant blowing-up of {$3$}-dimensional canonical
  singularities and its application to degenerations of surfaces}, Ann. of
  Math. (2) \textbf{127} (1988), no.~1, 93--163.

\bibitem[Kis05]{Kis05}
Takashi Kishimoto, \emph{Compactifications of contractible affine 3-folds into
  smooth {F}ano 3-folds with {$B_2=2$}}, Math. Z. \textbf{251} (2005), no.~4,
  783--820.

\bibitem[KM92]{KollarMori-3d_flips}
J\'{a}nos Koll\'{a}r and Shigefumi Mori, \emph{Classification of
  three-dimensional flips}, J. Amer. Math. Soc. \textbf{5} (1992), no.~3,
  533--703.

\bibitem[KM98]{KollarMori-Bir_geometry}
J{\'a}nos Koll{\'a}r and Shigefumi Mori, \emph{Birational geometry of algebraic
  varieties}, Cambridge Tracts in Mathematics, vol. 134, Cambridge University
  Press, Cambridge, 1998, With the collaboration of C. H. Clemens and A. Corti,
  Translated from the 1998 Japanese original.

\bibitem[Kol96]{Kollar-Rational_curves_on_alg_var}
J\'{a}nos Koll\'{a}r, \emph{Rational curves on algebraic varieties}, Ergebnisse
  der Mathematik und ihrer Grenzgebiete. 3. Folge. A Series of Modern Surveys
  in Mathematics [Results in Mathematics and Related Areas. 3rd Series. A
  Series of Modern Surveys in Mathematics], vol.~32, Springer-Verlag, Berlin,
  1996.

\bibitem[Kol97]{Kollar-Singularities_of_pairs}
\bysame, \emph{Singularities of pairs}, Algebraic geometry---{S}anta {C}ruz
  1995, Proc. Sympos. Pure Math., vol.~62, Amer. Math. Soc., Providence, RI,
  1997, pp.~221--287.

\bibitem[Kol13]{Kollar-Singularities_of_MMP}
\bysame, \emph{Singularities of the minimal model program}, Cambridge Tracts in
  Mathematics, vol. 200, Cambridge University Press, Cambridge, 2013, With a
  collaboration of S\'{a}ndor Kov\'{a}cs.

\bibitem[KSC04]{Kollar-Smith-Corti_Nearly_rational}
J\'{a}nos Koll\'{a}r, Karen~E. Smith, and Alessio Corti, \emph{Rational and
  nearly rational varieties}, Cambridge Studies in Advanced Mathematics,
  vol.~92, Cambridge University Press, Cambridge, 2004.

\bibitem[Man86]{Manin-cubic_forms}
Yu.~I. Manin, \emph{Cubic forms}, second ed., North-Holland Mathematical
  Library, vol.~4, North-Holland Publishing Co., Amsterdam, 1986, Algebra,
  geometry, arithmetic, Translated from the Russian by M. Hazewinkel.

\bibitem[Mat89]{Matsumura}
Hideyuki Matsumura, \emph{Commutative ring theory}, second ed., Cambridge
  Studies in Advanced Mathematics, vol.~8, Cambridge University Press,
  Cambridge, 1989, Translated from the Japanese by M. Reid.

\bibitem[Mat02]{Matsuki}
Kenji Matsuki, \emph{Introduction to the {M}ori program}, Universitext,
  Springer-Verlag, New York, 2002.

\bibitem[Mor75]{Mo75}
Shigefumi Mori, \emph{On a generalization of complete intersections}, J. Math.
  Kyoto Univ. \textbf{15} (1975), no.~3, 619--646.

\bibitem[Oka19]{Okada-stable_rationality_of_Fano_3fold_hypersurfaces}
Takuzo Okada, \emph{Stable rationality of orbifold {F}ano 3-fold
  hypersurfaces}, J. Algebraic Geom. \textbf{28} (2019), no.~1, 99--138.

\bibitem[Pro91]{Pr}
Yu.~G. Prokhorov, \emph{Fano threefolds of genus {$12$} and compactifications
  of {${\bf C}^3$}}, Algebra i Analiz \textbf{3} (1991), no.~4, 162--170.

\bibitem[Pro94]{Pr93}
Yuri~G. Prokhorov, \emph{Compactifications of {${\bf C}^4$} of index {$3$}},
  Algebraic geometry and its applications ({Y}aroslavl', 1992), Aspects Math.,
  E25, Friedr. Vieweg, Braunschweig, 1994, pp.~159--169.

\bibitem[Pro16]{Pr16}
Yu.~G. Prokhorov, \emph{Singular {F}ano manifolds of genus 12}, Mat. Sb.
  \textbf{207} (2016), no.~7, 101--130.

\bibitem[PS88]{H3a}
Thomas Peternell and Michael Schneider, \emph{Compactifications of {${\bf
  C}^3$}. {I}}, Math. Ann. \textbf{280} (1988), no.~1, 129--146.

\bibitem[Puk98]{Pu98}
Aleksandr~V. Pukhlikov, \emph{Birational automorphisms of {F}ano
  hypersurfaces}, Invent. Math. \textbf{134} (1998), no.~2, 401--426.

\bibitem[Puk13]{Pukhlikov-bir_rigid}
Aleksandr Pukhlikov, \emph{Birationally rigid varieties}, Mathematical Surveys
  and Monographs, vol. 190, American Mathematical Society, Providence, RI,
  2013.

\bibitem[PZ18]{PZ-genus10_4folds_and_A4}
Yuri Prokhorov and Mikhail Zaidenberg, \emph{Fano-{M}ukai fourfolds of genus 10
  as compactifications of {$\Bbb C^4$}}, Eur. J. Math. \textbf{4} (2018),
  no.~3, 1197--1263.

\bibitem[Rei80]{Reid-canonical_3folds}
Miles Reid, \emph{Canonical {$3$}-folds}, Journ\'{e}es de {G}\'{e}ometrie
  {A}lg\'{e}brique d'{A}ngers, {J}uillet 1979/{A}lgebraic {G}eometry, {A}ngers,
  1979, Sijthoff \& Noordhoff, Alphen aan den Rijn---Germantown, Md., 1980,
  pp.~273--310.

\bibitem[SS85]{Vanishing-Theorems-Cplx-Manifolds}
Bernard Shiffman and Andrew~John Sommese, \emph{Vanishing theorems on complex
  manifolds}, Progress in Mathematics, vol.~56, Birkh\"{a}user Boston, Inc.,
  Boston, MA, 1985.

\bibitem[Tak00]{Takayama-pi_1(lt_Fano)}
Shigeharu Takayama, \emph{Simple connectedness of weak {F}ano varieties}, J.
  Algebraic Geom. \textbf{9} (2000), no.~2, 403--407.

\bibitem[Zha06]{Zhang-Fanos_rationally_connected}
Qi~Zhang, \emph{Rational connectedness of log {${\bf Q}$}-{F}ano varieties}, J.
  Reine Angew. Math. \textbf{590} (2006), 131--142.

\end{thebibliography}

\end{document}